\numberwithin{equation}{section}
\theoremstyle{definition}
\newtheorem{example}{Example}[section]
\newtheorem{definition}[example]{Definition}
\newtheorem{remark}[example]{Remark}
\theoremstyle{plain}
\newtheorem{lemma}[example]{Lemma}
\newtheorem{theorem}[example]{Theorem}
\newtheorem{proposition}[example]{Proposition}
\newtheorem{corollary}[example]{Corollary}
\newtheorem{conjecture}[example]{Conjecture}
\DeclareMathOperator{\Int}{\mathsf{Int}}
\DeclareMathOperator{\bZ}{\mathbb{Z}}
\DeclareMathOperator{\cA}{\mathcal{A}}
\DeclareMathOperator{\cF}{\mathcal{F}}
\DeclareMathOperator{\g}{\gamma}
\DeclareMathOperator{\de}{\delta}
\newcommand{\ctext}[1]{\raise0.1ex\hbox{\textcircled{\scriptsize{#1}}}}
\title{$F$-matrices of cluster algebras from triangulated surfaces}
\author{Yasuaki Gyoda \and Toshiya Yurikusa}
\address{Y. Gyoda: Graduate School of Mathematics, Nagoya University, Chikusa-ku, Nagoya, 464-8602 Japan}
\email{m17009g@math.nagoya-u.ac.jp}
\address{T. Yurikusa: Mathematical Institute, Tohoku University, Aoba-ku, Sendai, 980-8578, Japan}
\email{toshiya.yurikusa.d8@tohoku.ac.jp}
\begin{document}

\keywords{marked surface, tagged triangulation, intersection number, cluster algebra, $F$-matrix}

\maketitle
\begin{abstract}
 For a given marked surface $(S,M)$ and a fixed tagged triangulation $T$ of $(S,M)$, we show that each tagged triangulation $T'$ of $(S,M)$ is uniquely determined by the intersection numbers of tagged arcs of $T$ and tagged arcs of $T'$. As consequence, each cluster in the cluster algebra $\cA(T)$ is uniquely determined by its $F$-matrix which is a new numerical invariant of the cluster introduced by Fujiwara and Gyoda.
\end{abstract}

\section{Introduction}

Cluster algebras are commutative subrings of rational function fields. They were introduced in \cite{FZ02} to study total positivity of semisimple Lie groups and canonical bases of quantum groups. Nowadays, it is found that cluster algebras appear in various subjects in mathematics, for example, representation theory of quivers, Poisson geometry, integrable systems, and so on.

 One of important classes of cluster algebras is given from marked surfaces that were developed in \cite{FoG06,FoG09,FoST,FoT,GSV}. For a marked surface $(S,M)$ and the associated cluster algebra, its cluster complex is identified with a connected component of the tagged arc complex of $(S,M)$ \cite{FoST}. In this way, cluster variables correspond to tagged arcs, and clusters correspond to tagged triangulations. Many properties of the cluster algebra can be shown by using this correspondence (see e.g. \cite{FeST,FoST,FoT,L,M,MSW11,MSW13}). Qiu and Zhou \cite{QZ} introduced an intersection number of two tagged arcs to study cluster categories.

 The aim of this paper is to study a new numerical invariant of cluster variables and clusters, called $f$-vectors and $F$-matrices respectively, introduced in \cite{FuG,FK} for the cluster algebra associated with $(S,M)$. To do it, we use intersection numbers of tagged arcs since it was proved by \cite{Y} that intersection vectors coincide with $f$-vectors in the associated cluster algebra. We fix a tagged triangulation $T$ of $(S,M)$. For a tagged arc $\de$ of $(S,M)$, we consider a vector, called its intersection vector, whose entries are intersection numbers of $\de$ and tagged arcs of $T$. We show that a tagged triangulation $T'$ of $(S,M)$ is uniquely determined by the intersection vectors of tagged arcs of $T'$ (Theorem \ref{main}). It induces our main result: in this case, clusters are uniquely determined by their $F$-matrices (Corollary \ref{Funique}).

 This paper is organized as follows. In the rest of this section, we give the results of this paper. In Section \ref{modif}, we prove our results Theorems \ref{main} and \ref{ec} below. For that reason, we introduce modifications of tagged arcs. It plays a key role in our proofs that they are uniquely determined by their intersection vectors (Theorem \ref{intinj}). In Section \ref{seclass}, we study a more detailed result of Theorem \ref{main}. In Section \ref{secfvec}, we recall the notions of $f$-vectors and $F$-matrices. Using the correspondence between $f$-vectors and intersection vectors given in \cite{Y}, we apply the results in the previous sections to study properties of $f$-vectors and $F$-matrices including Corollary \ref{Funique}. In Sections \ref{segments} and \ref{pfintinj}, we are devoted to prove Theorem \ref{intinj} by a lengthy case analysis. In Section \ref{Ex}, we give an example of our results.

\subsection{Our results}\label{MR}

 Let $(S,M)$ be a marked surface. {\it Tagged arcs} of $(S,M)$ are certain curves in $S$ whose endpoints are in $M$ and each end is tagged in one of two ways, {\it plain} or {\it notched} (see Subsection \ref{tagarc}). We represent tagged arcs as follows:
\[
\begin{tikzpicture}
 \coordinate (0) at (0,0) node[left]{plain};
 \coordinate (1) at (1,0);   \fill (1) circle (0.7mm);
 \draw (0) to (1);
\end{tikzpicture}
\hspace{7mm}
\begin{tikzpicture}
 \coordinate (0) at (0,0) node[left]{notched};
 \coordinate (1) at (1,0);   \fill (1) circle (0.7mm);
 \draw (0) to node[pos=0.8]{\rotatebox{90}{\footnotesize $\bowtie$}} (1);
\end{tikzpicture}
\]
 We call a tagged arc $\de$
\begin{itemize}
 \item a {\it plain arc} if its both ends are tagged plain;
 \item a {\it $1$-notched arc} if an end of $\de$ is tagged plain and the other end is tagged notched;
 \item a {\it $2$-notched arc} if its both ends are tagged notched.
\end{itemize}
 We denote by $\overline{\de}$ the plain arc corresponding to a tagged arc $\de$ of $(S,M)$.
 For tagged arcs $\de$ and $\epsilon$ such that $\overline{\de}=\overline{\epsilon}$, if exactly one of them is a $1$-notched arc, then the pair $(\de,\epsilon)$ is called a {\it pair of conjugate arcs} (see Figure \ref{pair}).
\begin{figure}[ht]
\begin{tikzpicture}
 \coordinate (0) at (0,0);
 \coordinate (1) at (0,-1.2);
 \draw (0) to node[left]{$\de$} (1);
 \draw (0) to [out=80,in=100,relative] node[pos=0.2]{\rotatebox{40}{\footnotesize $\bowtie$}} node[right]{$\epsilon$} (1);
 \fill(0) circle (0.7mm); \fill (1) circle (0.7mm);
\end{tikzpicture}
\hspace{20mm}
\begin{tikzpicture}
 \coordinate (0) at (0,0);
 \coordinate (1) at (0,-1.2);
 \draw (0) to node[left]{$\de$} node[pos=0.8]{\rotatebox{0}{\footnotesize $\bowtie$}} (1);
 \draw (0) to [out=80,in=100,relative] node[pos=0.2]{\rotatebox{40}{\footnotesize $\bowtie$}} node[pos=0.75]{\rotatebox{-40}{\footnotesize $\bowtie$}} node[right]{$\epsilon$} (1);
 \fill(0) circle (0.7mm); \fill (1) circle (0.7mm);
\end{tikzpicture}
   \caption{Pairs $(\de,\epsilon)$ of conjugate arcs}
   \label{pair}
\end{figure}
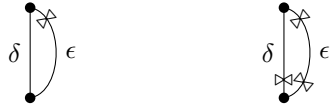

 For tagged arcs $\de$ and $\epsilon$ of $(S,M)$, the {\it intersection number of $\de$ and $\epsilon$} was defined in \cite[Definition 3.3]{QZ} as follows: We assume that $\de$ and $\epsilon$ intersect transversally in a minimum number of points in $S \setminus M$. Then we define the intersection number $\Int(\de,\epsilon)=A+B+C$ \footnote{Note that this definition is slightly different from the ``intersection number'' $(\de | \epsilon)$ defined in \cite[Definition 8.4]{FoST}. The intersection numbers in this paper coincide with entries of $f$-vectors in cluster algebras, and ones in \cite[Definition 8.4]{FoST} coincide with entries of $d$-vectors (see a paragraph right after Theorem \ref{class} and \cite{FoST}). They are the same if $(S,M)$ has no puncture.},where
\begin{itemize}
 \item $A$ is the number of intersection points of $\de$ and $\epsilon$ in $S \setminus M$;
 \item $B$ is the number of pairs of an end of $\de$ and an end of $\epsilon$ that are incident to a common puncture such that their tags are different;
 \item $C=0$ unless $\de$ and $\epsilon$ form a pair of conjugate arcs, in which case $C=-1$.
\end{itemize}
 Tagged arcs $\de$ and $\epsilon$ are called {\it compatible} if $\Int(\de,\epsilon)=0$. A {\it tagged triangulation} is a maximal set of pairwise compatible tagged arcs.

 The number of tagged arcs in a tagged triangulation of $(S,M)$ is constant \cite[Theorem 7.9]{FoST}. Fix a tagged triangulation $T$ of $(S,M)$ with $n$ tagged arcs. For a tagged arc $\de$ of $(S,M)$, we define
\[
 \Int(T,\de):=(\Int(t,\de))_{t \in T} \in \bZ_{\ge 0}^{n},
\]
called an {\it intersection vector of $\de$ with respect to $T$}. For a tagged triangulation $T'=\{\de_1,\ldots,\de_n\}$ of $(S,M)$, we denote by $\Int(T,T')$ the non-negative integer matrix with columns $\Int(T,\de_1),\ldots,\Int(T,\de_n)$. We are ready to state the key result of this paper.

\begin{theorem}\label{main}
 Let $T$ be a tagged triangulation of $(S,M)$. If tagged triangulations $T'$ and $T''$ of $(S,M)$ have $\Int(T,T')=\Int(T,T'')$ up to permutations of columns, then $T'=T''$.
\end{theorem}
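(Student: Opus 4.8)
The plan is to reduce Theorem~\ref{main} to the injectivity statement for modifications of tagged arcs promised as Theorem~\ref{intinj}. First I would set up a combinatorial dictionary: a tagged triangulation $T'$ of $(S,M)$ is, by \cite[Theorem~7.11]{FoST}, obtained from an ordinary ideal triangulation by the tagging operation, so the tagged arcs appearing in $T'$ come in a very restricted list of shapes (plain arcs, $1$-notched arcs at a puncture, pairs of conjugate arcs at a puncture, and the special self-folded configurations around a puncture enclosed by a loop). The key point is that the multiset $\Int(T,T')$ determines, for each column, not just a vector of non-negative integers but — via the modification procedure — an actual tagged arc. So the strategy is: (i) show each individual column $\Int(T,\de)$ determines the tagged arc $\de$ (this is exactly Theorem~\ref{intinj}, applied after passing to modifications), and (ii) show that once every arc of $T'$ and every arc of $T''$ is pinned down, the equality of multisets $\Int(T,T')=\Int(T,T'')$ forces $T'=T''$ as sets, which is immediate because a tagged triangulation is determined by its underlying set of tagged arcs.

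The main subtlety, and the reason the paper introduces modifications, is that the naive claim ``$\Int(T,\de)=\Int(T,\epsilon)\Rightarrow \de=\epsilon$'' is \emph{false}: a pair $(\de,\epsilon)$ of conjugate arcs at a puncture typically has the same intersection vector with respect to $T$, since geometrically they run parallel and differ only in a tag at a puncture incident to $T$ in a way that cancels (the $B$ and $C$ contributions conspire). Thus Theorem~\ref{intinj} cannot be about tagged arcs directly; it must be about \emph{modifications} of tagged arcs, a refinement that separates conjugate pairs. Accordingly, in this section I would (a) define the modification $\widetilde{\de}$ of a tagged arc $\de$ relative to $T$, (b) check that $\Int(T,\de)$ together with the list of punctures at which $T$ has notches determines the multiset of modifications $\{\widetilde{\de}_i\}$ of the arcs of $T'$, granting Theorem~\ref{intinj}, and (c) observe that the map $\de\mapsto\widetilde{\de}$ is injective on the tagged arcs that can occur inside a single tagged triangulation, so recovering the modifications recovers $T'$. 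Step (b) is where one must be careful about the puncture-incidence bookkeeping: the entries of $\Int(t,\de)$ for $t\in T$ incident to a puncture record both transversal crossings and tag-discrepancies, and disentangling these is precisely the content of the modification formalism.

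Concretely, the proof I would write is short modulo Theorem~\ref{intinj}: suppose $T'=\{\de_1,\dots,\de_n\}$ and $T''=\{\epsilon_1,\dots,\epsilon_n\}$ with $\Int(T,T')=\Int(T,T'')$ up to column permutation; after reindexing, $\Int(T,\de_i)=\Int(T,\epsilon_i)$ for all $i$. Passing to modifications, $\Int(T,\widetilde{\de_i})=\Int(T,\widetilde{\epsilon_i})$, so by Theorem~\ref{intinj} we get $\widetilde{\de_i}=\widetilde{\epsilon_i}$, hence $\de_i$ and $\epsilon_i$ have the same modification. Now I would invoke the structural fact that within a fixed tagged triangulation the only way two distinct tagged arcs can share a modification is if they form a conjugate pair at a common puncture $p$ (the ``$\bowtie$ at $p$'' ambiguity); but a tagged triangulation cannot contain both members of such an ambiguity together with the rest of $T''$ unless $T''$ itself already contains the conjugate pair, in which case the modification data still pins down which pair it is. Running this uniqueness argument puncture by puncture — using that the set of punctures where $T'$ (resp.\ $T''$) is notched is itself read off from the modifications — gives $\de_i=\epsilon_i$ for every $i$, so $T'=T''$.

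I expect the genuine obstacle to be none of the above bookkeeping but rather Theorem~\ref{intinj} itself, whose proof the excerpt explicitly defers to Sections~\ref{segments} and \ref{pfintinj} ``by a lengthy case analysis''; that is where one must argue, by analyzing how a modified arc sweeps across the triangles of $T$ and is forced through specific edges by its crossing numbers, that the whole arc is reconstructed. For the present theorem, though, I would keep the argument at the level sketched: the work is in organizing the reduction cleanly, in particular in verifying the claim that equality of intersection vectors is compatible with (and in fact equivalent, at the level of modifications, to) equality of the modified arcs, and in handling the self-folded/enclosed-puncture triangulations where the dimension count $n$ and the list of ``pending'' arcs require a small separate check.
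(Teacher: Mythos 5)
Your overall architecture --- pass to modifications, invoke Theorem~\ref{intinj}, then argue that the leftover ambiguity cannot occur inside a tagged triangulation --- is the same as the paper's, but your step (i) is false and your ``structural fact'' misidentifies where the injectivity actually fails, which is exactly where the real work of this proof lies. A single column $\Int(T,\de)$ does \emph{not} determine $\de$, for two reasons neither of which is the conjugate-pair phenomenon you describe. First, every arc of $T$ has intersection vector $0$, so the columns cannot tell you \emph{which} arcs of $T$ belong to $T'$. Second (Remark~\ref{not1to1} and Theorem~\ref{ec}), two distinct $2$-notched arcs whose underlying plain arcs both lie in $T$ and join the same pair of punctures have identical non-zero intersection vectors and identical modifications (a pair of $1$-punctured cycles); such arcs are not conjugate, and moreover they are mutually compatible, so your claim that ${\sf M}_T$ is injective on arcs occurring in a single tagged triangulation is also false. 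Conversely, a genuine conjugate pair $(\de,\epsilon)$ generally does \emph{not} share an intersection vector with respect to $T$: the notched member picks up $B$-contributions from the plain arcs of $T$ at that puncture, and the $C=-1$ term only enters between $\de$ and $\epsilon$ themselves, never against an arc of $T$. So the ``$\bowtie$ at $p$ ambiguity'' you propose to resolve is not the ambiguity that exists.

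Because of this, columnwise matching only yields $\de_i=\epsilon_i$ for the arcs lying in the set $A$ of Proposition~\ref{mT}(2); for the remaining indices one must use compatibility and maximality of tagged triangulations, and this argument is absent from your proposal. The paper's proof supplies it: if $T'\neq T''$, maximality forces some $\de_f$ with $f>k$ to be incompatible with some $\epsilon_g$ with $g>k$; since the underlying plain arcs of both lie in $T$, the incompatibility can only come from a tag discrepancy at a common puncture, so one of them lies in $T$ and the other is a $2$-notched arc; compatibility of $\de_f$ with $\de_g$ inside $T'$ together with $(\Diamond)$ then forces $\de_f$ to be a $1$-notched arc of $T$ at that puncture, whence $\hat{\de_g}$ is not $2$-notched, contradicting the assumption that ${\sf M}_T(\de_g)$ is a pair of $1$-punctured cycles. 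Your ``puncture by puncture'' sketch rests on the incorrect structural fact and would not produce this contradiction, so the proof as proposed does not close.
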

We consider whether a tagged arc $\delta\notin T$ is determined by its intersection vector with $T$. Note that when $\de\in T$, then clearly we have $\Int(T,\de)=0$, so that arcs in $T$ are clearly not determined by their intersection number with $T$. Thus we study the following property.

\begin{definition}
 For a tagged triangulation $T$ of $(S,M)$, we say that $T$ {\it detects tagged arcs} if it satisfies the following condition:\par
 $\bullet$ If tagged arcs $\de$ and $\epsilon$ of $(S,M)$ have a common non-zero intersection vector $\Int(T,\de)=\Int(T,\epsilon)$, then $\de=\epsilon$.
\end{definition}

 We give a characterization of this property. In particular, a tagged triangulation does not detect tagged arcs generally.

\begin{theorem}\label{ec}
 Let $T$ be a tagged triangulation of $(S,M)$. Then $T$ detects tagged arcs if and only if there are no tagged arcs $\de$ and $\epsilon$ of $T$ connecting two (possibly same) common punctures such that $\overline{\de} \neq \overline{\epsilon}$.
\end{theorem}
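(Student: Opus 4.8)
The plan is to establish the two implications separately. Throughout one uses the involutions $\sigma_p$ (``change every tag at the puncture $p$'') on tagged arcs and on tagged triangulations: each $\sigma_p$ preserves all intersection numbers --- hence also the property of detecting tagged arcs --- and it neither creates nor destroys pairs of arcs of $T$ of the type occurring in the statement, so one may normalize freely. One also uses repeatedly the splitting $\Int(t,\gamma)=A+B+C$ for $t\in T$ and a tagged arc $\gamma$, where $A$ depends only on $\overline t$ and $\overline\gamma$, $B$ depends only on the tags of $t$ and of $\gamma$ at their common punctures, and $C\in\{0,-1\}$ is nonzero only when $\overline t=\overline\gamma$.

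For the ``only if'' direction I would argue by contraposition. Suppose $T$ contains tagged arcs $\de,\epsilon$ joining a common pair of punctures $p,q$ (possibly $p=q$) with $\overline\de\neq\overline\epsilon$. After applying $\sigma_p$ and $\sigma_q$ one may assume $\de$ and $\epsilon$ are plain; since they are compatible and have distinct underlying arcs, the local structure of a tagged triangulation at a puncture forces every arc of $T$ incident to $p$ to be plain at $p$, and likewise at $q$ (a notch there would give a positive intersection number with $\de$ or with $\epsilon$). Now let $\alpha$ be the tagged arc with underlying arc $\overline\de$ and both ends notched, and $\beta$ the one with underlying arc $\overline\epsilon$ and both ends notched. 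For every $t\in T$ one has $A(t,\alpha)=A(\overline t,\overline\de)=0=A(\overline t,\overline\epsilon)=A(t,\beta)$, because $\overline\de,\overline\epsilon\in T$; the corrections $C(t,\alpha)$ and $C(t,\beta)$ vanish, since $\alpha,\beta$ are $2$-notched whereas every arc of $T$ with underlying arc $\overline\de$ or $\overline\epsilon$ is plain; and $B(t,\alpha)=B(t,\beta)$, as this term only sees the incidences of $t$ at $p$ and $q$ and the (uniformly notched) tags of $\alpha$, resp. $\beta$, there. Hence $\Int(T,\alpha)=\Int(T,\beta)$, and testing against $\de$ gives $\Int(\de,\alpha)=B(\de,\alpha)\geq 2$, so this common vector is nonzero; since $\overline\alpha=\overline\de\neq\overline\epsilon=\overline\beta$ we have $\alpha\neq\beta$, so $T$ does not detect tagged arcs.

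For the ``if'' direction, assume $T$ has no such pair and that $\Int(T,\de)=\Int(T,\epsilon)=v\neq 0$; the goal is $\de=\epsilon$. The first step would be to reduce to $\overline\de=\overline\epsilon$: using Theorem~\ref{intinj} together with the comparison (from Section~\ref{modif}) between the intersection vector of a tagged arc relative to $T$ and that of its modification relative to the modification of $T$, equality of the vectors forces the modifications of $\de$ and $\epsilon$ to agree away from a controlled local picture about a puncture, and the hypothesis on $T$ rules out $\overline\de\neq\overline\epsilon$ --- two distinct underlying arcs carrying the same intersection data would, after propagating through the modification, yield a pair of arcs of $T$ of the forbidden type. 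So write $\rho:=\overline\de=\overline\epsilon$, with puncture-endpoints $u$ and $w$ (possibly $u=w$); if $\de\neq\epsilon$ they differ in the tag at some endpoint, say $u$. Then for any $t\in T$ incident to $u$ with $\overline t\neq\rho$ one has $C(t,\de)=C(t,\epsilon)=0$ while $B(t,\de)-B(t,\epsilon)=\pm n_u(t)\neq 0$, contradicting $\Int(t,\de)=\Int(t,\epsilon)$; hence every arc of $T$ incident to $u$ has underlying arc $\rho$. The standard analysis of tagged triangulations then places $u$ as the interior puncture of a self-folded triangle, with the arcs of $T$ having underlying arc $\rho$ forming a conjugate pair; a direct computation of the intersection vectors with $T$ of the (at most four) tagged versions of $\rho$ shows that the two lying in $T$ have vector $0$ while the other two have distinct nonzero vectors, so $v\neq 0$ forces $\de=\epsilon$.

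I expect the crux to be the reduction, in the ``if'' direction, from equality of intersection vectors relative to $T$ to $\overline\de=\overline\epsilon$: unlike the purely local arguments above, this is a global rigidity statement that genuinely needs the modification calculus behind Theorem~\ref{intinj} (and runs parallel to the proof of Theorem~\ref{main}), since $\Int(T,\cdot)$ records only a projection of the intersection data of the modified arc relative to the modified triangulation, and recovering enough of it --- precisely where the combinatorial hypothesis on $T$ enters --- is the technical heart of the argument.
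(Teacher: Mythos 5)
Your ``only if'' direction is essentially the paper's argument (take the two $2$-notched arcs over $\overline{\de}$ and $\overline{\epsilon}$ and check via the $A+B+C$ decomposition that they share a nonzero intersection vector), and the extra care you take --- normalizing tags and verifying that all arcs of $T$ at $p,q$ are plain there --- is correct and fine.

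The ``if'' direction, however, has a genuine gap exactly where you flag ``the technical heart.'' Your reduction to $\overline{\de}=\overline{\epsilon}$ is mischaracterized: Theorem \ref{intinj} does not give agreement of ${\sf M}_T(\de)$ and ${\sf M}_T(\epsilon)$ ``away from a controlled local picture'' --- it gives ${\sf M}_T(\de)={\sf M}_T(\epsilon)$ exactly (after Proposition \ref{mT}(1)). The only remaining issue is injectivity of ${\sf M}_T$, and Proposition \ref{mT}(2)/Corollary \ref{taginj} already settle this except in one explicitly identified case: both ${\sf M}_T(\de)$ and ${\sf M}_T(\epsilon)$ are pairs of $1$-punctured cycles, i.e.\ $\de,\epsilon$ are $2$-notched arcs whose underlying plain arcs lie in $T$ and (since the cycles coincide) join the same pair of punctures. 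That is precisely, and only, where the hypothesis on $T$ enters: it forces $\overline{\de}=\overline{\epsilon}$, hence $\de=\epsilon$. So the ``crux'' you defer is in fact a two-line consequence of results you already cite; by not invoking Corollary \ref{taginj} you leave the reduction unproved and then compensate with a local tag analysis that is both redundant and flawed. Concretely, in that analysis the claim that every $t\in T$ incident to $u$ with $\overline{t}\neq\rho$ satisfies $B(t,\de)-B(t,\epsilon)=\pm n_u(t)\neq 0$ fails when $\de$ and $\epsilon$ differ at \emph{both} endpoints $u,w$ and $t$ joins $u$ to $w$ (e.g.\ $\de=\rho^{(u)}$, $\epsilon=\rho^{(w)}$, $t$ plain with one end at each: both $B$'s equal $1$). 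One can repair this by bringing in arcs incident to only one of the two punctures, but the cleaner path is the one the paper takes.
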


 Next, we give a complete list of marked surfaces which have tagged triangulations detecting tagged arcs.

\begin{theorem}\label{class}
 $(1)$ If $S$ is not closed, then there is at least one tagged triangulation of $(S,M)$ detecting tagged arcs.

 $(2)$ If $S$ is closed, then there is at least one tagged triangulation of $(S,M)$ detecting tagged arcs if and only if the inequality
\begin{equation}\label{ineq}
 p\ge\left\{
     \begin{array}{ll}
 10 &\text{if $g=2$},\\
 \smallskip \\
 \cfrac{7+\sqrt{1+48g}}{2} &\text{if $g\neq2$},
     \end{array}\right.
\end{equation}
holds\, where $p$ is the number of punctures of $(S,M)$ and $g$ is the genus of $S$.\footnote{The lower part of the right hand side of \eqref{ineq} is known as the \emph{Heawood number}. This number appears in the version of the four-color theorem for higher genus surface \cite{RY}.}

 $(3)$ All tagged triangulation of $(S,M)$ detect tagged arcs if and only if $(S,M)$ is one of the followings:
\begin{itemize}
 \item a marked surface with no punctures;
 \item a marked surface of genus $0$ with exactly $1$ boundary component and at most $2$ punctures;
 \item a marked surface of genus $0$ with exactly $2$ boundary components and a $1$ puncture.
\end{itemize}
\end{theorem}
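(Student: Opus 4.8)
The plan is to run everything through the combinatorial criterion of Theorem~\ref{ec}, which we read as follows: a tagged triangulation $T$ detects tagged arcs exactly when each (possibly degenerate) pair of punctures is joined by at most one plain arc underlying an arc of $T$; equivalently, the plain arcs underlying those arcs of $T$ that join two punctures, viewed as edges on the set of punctures, form a graph with no repeated edge. Conjugate pairs of $T$ share their underlying plain arc and so never obstruct this, so in every part the only thing to control is the occurrence of two parallel non-isotopic arcs.

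\emph{Part $(1)$.} Since $S$ is not closed it has a boundary marked point. For each puncture $P_i$ choose an arc $\gamma_i$ from $P_i$ to the boundary, with $\gamma_1,\dots,\gamma_p$ pairwise disjoint; cutting $(S,M)$ along all the $\gamma_i$ gives a bordered surface with no puncture, and gluing any of its triangulations back together with the $\gamma_i$ produces a triangulation $T$ of $(S,M)$ in which no arc joins two punctures. By the reading above, $T$ detects tagged arcs. (A few small degenerate surfaces, such as once-punctured monogons, are treated directly.)

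\emph{Part $(2)$.} The core is the identification, for closed $(S,M)$, of detecting triangulations of $(S,M)$ with simplicial triangulations of the closed orientable surface $\Sigma_g$. First, a detecting triangulation $T$ of a closed $(S,M)$ can contain neither a self-folded triangle nor a loop: each such configuration has an adjacent ordinary triangle two of whose sides are distinct arcs of $T$ joining the same pair of punctures, which is forbidden; pushing this argument through the possible nesting of self-folded triangles along a loop is the delicate point. Hence, after normalizing all tags to plain, $T$ is an honest simplicial triangulation of $\Sigma_g$ with exactly $p$ vertices, so $p$ is at least the minimum number $p_{\min}(g)$ of vertices needed to triangulate $\Sigma_g$ simplicially. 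By the theorem of Jungerman and Ringel on minimal triangulations of surfaces, $p_{\min}(g)=\lceil(7+\sqrt{1+48g}\,)/2\rceil$ for every $g\neq 2$ and $p_{\min}(2)=10$, which is exactly the inequality \eqref{ineq}. Conversely, if \eqref{ineq} holds, pick a simplicial triangulation of $\Sigma_g$ on $p_{\min}(g)\le p$ vertices, introduce the remaining $p-p_{\min}(g)$ punctures by stellar subdivisions of triangles (which preserves simpliciality), and place the $p$ punctures at the vertices: its edges form an ideal, hence tagged, triangulation of $(S,M)$ with no repeated edge, i.e.\ a detecting one. The main obstacle of the whole theorem lies here: both the loop-free and self-folded-free reduction above, and the genuinely hard genus-$2$ exception of Jungerman--Ringel, which is precisely what makes the two cases of \eqref{ineq} differ.

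\emph{Part $(3)$.} That \emph{every} tagged triangulation detects tagged arcs says that no triangulation of $(S,M)$ contains two non-isotopic compatible tagged arcs with a common pair of (possibly equal) puncture endpoints. For the three families on the list this is verified directly. If $(S,M)$ has no puncture the condition is vacuous. If $(S,M)$ is a genus-$0$ surface with one boundary component and at most two punctures, then any two punctures are joined by a unique isotopy class of arc: the union of two such arcs is a circle bounding a disk, one of whose two complementary regions is an unpunctured bigon, forcing the arcs to be isotopic; and a single puncture carries no essential loop unless there are exactly two punctures, in which case there is only the one loop enclosing the other puncture. A similar count handles the genus-$0$ surface with two boundary components and one puncture, whose unique puncture carries a single essential loop. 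For every other marked surface with a puncture --- those of positive genus, or of genus $0$ with at least three punctures, with at least three boundary components, or with exactly two boundary components and at least two punctures --- one exhibits two non-isotopic simple arcs (or two loops at one puncture) between a fixed pair of punctures that do not cross in the interior, hence are compatible, and extends them to a triangulation. These local pictures are routine; the only point requiring care is separating the borderline surfaces in $(3)$ from their near neighbours.
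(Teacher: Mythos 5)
Your strategy is the same as the paper's: everything is run through Theorem \ref{ec}; part (1) is a construction isolating the punctures from one another; part (2) identifies detecting triangulations of closed surfaces with simplicial triangulations and invokes Jungerman--Ringel; part (3) checks the three families and exhibits an explicit bad pair of arcs otherwise. Two intermediate claims, however, are false as written. In part (1), gluing back \emph{any} triangulation of the cut surface does not yield a triangulation in which no arc joins two punctures: after cutting along the $\gamma_i$, each puncture becomes a boundary vertex of the cut surface, and a triangulation of that surface may contain arcs between two such vertices --- indeed, if the cut surface is not a disk (e.g.\ an annulus with both former punctures on the same boundary component), it may contain two non-isotopic compatible arcs with the same pair of former-puncture endpoints, and regluing produces exactly the configuration forbidden by Theorem \ref{ec}. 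You must \emph{choose} the triangulation; the paper's Lemma \ref{bpconn} does so by enclosing each puncture in a triangle whose other two vertices lie on the boundary, so every arc at a puncture runs to the boundary.

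In part (2), the assertion that a detecting triangulation of a closed surface can contain neither a loop nor a self-folded configuration fails for $g=0$: gluing two $2$-puncture pieces along their boundary loops gives a tagged triangulation of the $5$-punctured sphere which contains a loop and four conjugate pairs yet detects tagged arcs, since no two of its arcs with distinct underlying plain arcs join a common pair of punctures. This is precisely why the paper proves Lemmas \ref{noloops}, \ref{nonotches} and Proposition \ref{JRdetect} only for $g>0$ and handles $g=0$ separately (there \eqref{ineq} reads $p\ge 4$, which holds automatically under the standing assumptions, so only the constructive direction is needed). Your final conclusion survives because the ``only if'' direction is vacuous at $g=0$, but the lemma as you state it is wrong, and the loop/self-folded exclusion for $g>0$ still requires the puzzle-piece case analysis you defer. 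Finally, ``no repeated edge'' plus ``no loops'' is not yet simpliciality: you must also rule out two triangles meeting in two or three vertices without the connecting edges, which again reduces to Theorem \ref{ec} but should be said explicitly.
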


 Finally we apply our results to a cluster algebra $\cA(T)$ associated with a tagged triangulation $T$ (see Subsection \ref{clalg}). Then each tagged arc $\de$ of $(S,M)$ gives rise to the cluster variables $z_{\de}$ in $\cA(T)$. It was shown in \cite{Y} that the intersection vector $\Int(T,\de)$ is equal to the $f$-vector of $z_{\de}$, that is, the maximal degree of $F$-polynomial of $z_{\de}$. As consequence, we get the main result of this paper.

\begin{corollary}[Corollary \ref{Funique}]
 Let $T$ be a tagged triangulation of $(S,M)$. For clusters $\mathbf{z}$ and $\mathbf{z'}$ of $\cA(T)$, if the $f$-vectors of cluster variables in $\mathbf{z}$ coincide with ones in $\mathbf{z'}$, then $\mathbf{z}=\mathbf{z'}$.
\end{corollary}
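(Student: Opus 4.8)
The plan is to deduce the statement from Theorem \ref{main} by translating everything into the surface model of $\cA(T)$. First I would recall the dictionary of \cite{FoST}: the clusters of $\cA(T)$ are in bijection with the tagged triangulations of $(S,M)$ lying in the connected component of the tagged arc complex that contains $T$, in such a way that each cluster variable $z_\de$ corresponds to a tagged arc $\de$ and a cluster $\mathbf{z}$ corresponds to the tagged triangulation $T_{\mathbf{z}}$ whose tagged arcs index precisely the cluster variables of $\mathbf{z}$. Thus, writing $\mathbf{z}=\{z_{\de_1},\dots,z_{\de_n}\}$ and $\mathbf{z'}=\{z_{\epsilon_1},\dots,z_{\epsilon_n}\}$, we get tagged triangulations $T':=T_{\mathbf{z}}=\{\de_1,\dots,\de_n\}$ and $T'':=T_{\mathbf{z'}}=\{\epsilon_1,\dots,\epsilon_n\}$ of $(S,M)$, and since the correspondence $\mathbf{z}\mapsto T_{\mathbf{z}}$ is a bijection (in particular injective), it suffices to prove $T'=T''$.

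Next I would invoke the main theorem of \cite{Y}: for the cluster algebra $\cA(T)$ with initial seed attached to $T$, the $f$-vector of the cluster variable $z_\de$, i.e.\ the maximal-degree vector of its $F$-polynomial with respect to this initial seed, coincides with the intersection vector $\Int(T,\de)$. Hence the hypothesis — that the $f$-vectors of the cluster variables in $\mathbf{z}$ coincide, as an unordered collection, with those of the cluster variables in $\mathbf{z'}$ — translates verbatim into the equality of multisets $\{\Int(T,\de_i)\}_{i=1}^n=\{\Int(T,\epsilon_j)\}_{j=1}^n$, that is, $\Int(T,T')=\Int(T,T'')$ up to permutation of columns.

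Finally, applying Theorem \ref{main} to the tagged triangulations $T'$ and $T''$ gives $T'=T''$, and therefore $\mathbf{z}=\mathbf{z'}$.

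The point to emphasize is that all the real work is already done: the genuinely hard input, injectivity of $T'\mapsto\Int(T,T')$, is exactly Theorem \ref{main}, and the identification of $f$-vectors with intersection vectors is imported from \cite{Y}. So the only things to be careful about are bookkeeping issues around the correspondence of \cite{FoST} — namely that distinct clusters yield distinct tagged triangulations (so that $T'=T''$ legitimately forces $\mathbf{z}=\mathbf{z'}$) and that a ``cluster'' is being read as an unordered collection of cluster variables, which is what matches the ``up to permutations of columns'' clause in Theorem \ref{main}. There is also the minor point that the tagged arc complex of a once-punctured closed surface is disconnected, but this creates no difficulty: $\mathbf{z}$ and $\mathbf{z'}$ are clusters of the \emph{same} algebra $\cA(T)$, so $T'$ and $T''$ lie in the same component as $T$, and in any case Theorem \ref{main} makes no connectivity hypothesis.
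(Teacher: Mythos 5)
Your proposal is correct and follows exactly the paper's route: the paper's proof of Corollary \ref{Funique} is the one-line combination of Theorem \ref{main} with Theorem \ref{f=Int}, mediated by the bijection of Theorem \ref{bijtc}, which is precisely what you spell out. The extra care you take with the unordered-multiset reading of a cluster and with the once-punctured closed surface case is consistent with how the paper handles these points elsewhere.
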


\begin{remark}
 In cluster algebras, there are four families of integer vectors which are $f$-vectors, $d$-vectors, $g$-vectors and $c$-vectors (see e.g. \cite{FZ02,FZ04,FZ07}). In cluster algebras defined from marked surfaces, they are given by $\Int(\cdot,\cdot)$, $(\cdot | \cdot)$ and shear coordinates \cite{Rea14, FoT,FeT}.
\[
\renewcommand{\arraystretch}{1.7}{
\begin{tabular}{c||c|c|c}
 Cluster algebras & $f$-vectors & $d$-vectors & $g$-vectors, $c$-vectors\\\hline
 Marked surfaces & $\Int(\cdot,\cdot)$ & $(\cdot | \cdot)$ & shear coordinates
  \end{tabular}}
\]
Analogues of Corollary \ref{Funique} for $d$-vectors, $g$-vectors and $c$-vectors have proved already in the case of general cluster algebras. About $d$-vectors, see \cite[Theorem 4.22 (i)]{cl16}, and about $c$-vectors and $g$-vectors, see \cite[Corollary 4.5, Theorem 4.8]{Nak}. In particular, also according a footnote of the definition of intersection number, when $(S,M)$ has no punctures, $f$-vectors coincide with $d$-vectors, and thus Corollary \ref{Funique} follows from \cite[Theorem 4.22 (i)]{cl16} in this case. 
\end{remark}

\medskip\noindent{\bf Acknowledgements}.
 The authors are grateful to Tomoki Nakanishi and Osamu Iyama for helpful advice. We also thanks Futaba Fujie and Masakazu Tsuda for helpful comments. Authors are Research Fellows of Society for the Promotion of Science (JSPS). This work was supported by JSPS KAKENHI Grant Number JP17J04270, JP20J12675.

\section{Modifications of tagged arcs}\label{modif}
\subsection{Tagged arcs}\label{tagarc}

 Let $S$ be a connected compact oriented Riemann surface with (possibly empty) boundary and $M$ a non-empty finite set of marked points on $S$ with at least one marked point on each boundary component. We call the pair $(S,M)$ a {\it marked surface}. Any marked point in the interior of $S$ is called a {\it puncture}. For technical reasons, throughout this paper we assume $(S,M)$ is not a monogon with at most one puncture, a digon without punctures, a triangle without punctures, and a sphere with at most three punctures (cf. \cite{FoST}).

\begin{definition}
 A {\it tagged arc} is a curve in $S$, considered up to isotopy, whose endpoints are in $M$ and each end is tagged in one of two ways, {\it plain} or {\it notched}, such that the following conditions are satisfied:
\begin{itemize}
 \item it does not intersect itself except at its endpoints;
 \item it is disjoint from $M$ and from the boundary of $S$ except at its endpoints;
 \item it does not cut out a monogon with at most one puncture or a digon without punctures;
 \item its endpoint lying on the boundary of $S$ is tagged plain;
 \item both ends of a loop are tagged in the same way,
\end{itemize}
 where a {\it loop} is a tagged arc with two identical endpoints.
\end{definition}

 For a tagged arc $\de$ and a puncture $p$ of $(S,M)$, we define that $\de^{(p)}$ is the tagged arc obtained from $\de$ by changing its tags at $p$. If $\de$ is not incident to $p$, then $\de^{(p)}=\de$. By definition, we have $\Int(\de^{(p)},\epsilon^{(p)})=\Int(\de,\epsilon)$ for any tagged arcs $\de, \epsilon$ and puncture $p$ of $(S,M)$. Therefore, to consider intersection vectors with respect to a tagged triangulation $T$ of $(S,M)$, by changing tags, we can assume that $T$ satisfies the following condition:
\begin{itemize}
 \item[$(\Diamond)$] The tagged triangulation $T$ consists of plain arcs and $1$-notched arcs, with at most one $1$-notched arc incident to each puncture.
\end{itemize}

\subsection{Puzzle pieces}\label{puzzle}

 A key of many proofs in this paper is a puzzle piece decomposition of tagged triangulations studied in \cite{FoST}. We denote by $T_3$ a tagged triangulation satisfying $(\Diamond)$ of a $4$-punctured sphere consisting of three pairs of conjugate arcs (see the right diagram of Figure \ref{pp}). Any tagged triangulation satisfying $(\Diamond)$ which is not $T_3$ is obtained by gluing together a number of puzzle pieces in Figure \ref{pp} (see \cite[Remark 4.2]{FoST}). We say that a puzzle piece in the first (resp., second, third) diagram from the left on Figure \ref{pp} is a {\it triangle piece} (resp., a {\it $1$-puncture piece}, a {\it $2$-puncture piece}).
\begin{figure}[ht]
\begin{tikzpicture}[baseline=5mm]
 \coordinate (0) at (0,0);
 \coordinate (1) at (120:1.5);
 \coordinate (2) at (180:1.5);
 \draw (0)--(1)--(2)--(0);
 \fill(0) circle (0.7mm); \fill (1) circle (0.7mm); \fill (2) circle (0.7mm);
\end{tikzpicture}
   \hspace{10mm}
\begin{tikzpicture}[baseline=-10mm]
 \coordinate (0) at (0,0);
 \coordinate (1) at (0,-1);
 \coordinate (2) at (0,-2);
 \draw (2) to [out=180,in=180] (0);
 \draw (2) to [out=0,in=0] (0);
 \draw (1) to (2);
 \draw (1) to [out=60,in=120,relative] node[pos=0.2]{\rotatebox{40}{\footnotesize $\bowtie$}} (2);
 \fill(0) circle (0.7mm); \fill (1) circle (0.7mm); \fill (2) circle (0.7mm);
\end{tikzpicture}
   \hspace{10mm}
\begin{tikzpicture}[baseline=-10mm]
 \coordinate (0) at (0,0);
 \coordinate (1) at (-0.5,-0.8); \fill (1) circle (0.7mm);
 \coordinate (1') at (0.5,-0.8); \fill (1') circle (0.7mm);
 \coordinate (2) at (0,-2); \fill (2) circle (0.7mm);
 \draw (0,-1) circle (1);
 \draw (1) to (2);
 \draw (1) to [out=-60,in=-120,relative] node[pos=0.2]{\rotatebox{170}{\footnotesize $\bowtie$}} (2);
 \draw (1') to (2);
 \draw (1') to [out=60,in=120,relative] node[pos=0.2]{\rotatebox{180}{\footnotesize $\bowtie$}} (2);
\end{tikzpicture}
   \hspace{10mm}
$T_3=$
\begin{tikzpicture}[baseline=2mm]
 \coordinate (0) at (0,0);
 \coordinate (u) at (90:1);
 \coordinate (r) at (-30:1);
 \coordinate (l) at (210:1);
 \draw (0) to (u);
 \draw (0) to [out=-60,in=-120,relative] node[pos=0.8]{\rotatebox{20}{\footnotesize $\bowtie$}} (u);
 \draw (0) to (r);
 \draw (0) to [out=-60,in=-120,relative] node[pos=0.8]{\rotatebox{100}{\footnotesize $\bowtie$}} (r);
 \draw (0) to (l);
 \draw (0) to [out=-60,in=-120,relative] node[pos=0.8]{\rotatebox{-30}{\footnotesize $\bowtie$}} (l);
 \fill (0) circle (0.7mm); \fill (u) circle (0.7mm); \fill (l) circle (0.7mm); \fill (r) circle (0.7mm);
\end{tikzpicture}
 \caption{The three puzzle pieces (triangle piece, $1$-puncture piece, $2$-puncture piece) and the tagged triangulation $T_3$}\label{pp}
\end{figure}
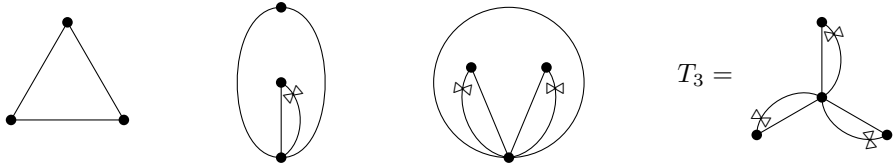

\subsection{Modifications of tagged arcs}

 In this subsection, unless otherwise noted, let $T$ be a tagged triangulation of $(S,M)$ satisfying $(\Diamond)$. To prove Theorems \ref{main} and \ref{ec}, we define modifications of tagged arcs with respect to $T$.

 Let $\de \notin T$ be a tagged arc of $(S,M)$. First, we change tags of $\delta$ at a puncture $p$ if $\de$ and a tagged arc of $T$ are tagged notched at $p$, and denote it by $\hat{\de}$ (see Figure \ref{hat}). Note that a notched arc of $T$ is a $1$-notched arc inside a pair of conjugate arcs of $T$ by $(\Diamond)$. Second, we construct a deformed curve ${\sf M}'_T(\hat{\de})$ as follows: for $\overline{\de} \notin T$,
\begin{itemize}\setlength{\leftskip}{-5mm}
 \item if $\hat{\de}$ is a plain arc, ${\sf M}'_T(\hat{\de})=\hat{\de}$;
 \item if $\hat{\de}$ is a notched arc and is not a loop, ${\sf M}'_T(\hat{\de})$ is obtained from $\hat{\de}$ by replacing its ends tagged notched as in the left diagram of Figure \ref{modification};
 \item if $\hat{\de}$ is a $2$-notched loop and there are both sides of $\hat{\de}$ in the same puzzle piece divided by $T$, ${\sf M}'_T(\hat{\de})$ is obtained from $\hat{\de}$ by replacing its ends as in the middle diagram of Figure \ref{modification};
 \item otherwise, ${\sf M}'_T(\hat{\de})$ is obtained from $\hat{\de}$ by replacing its ends as in the right diagram of Figure \ref{modification};
\end{itemize}
for $\overline{\de} \in T$, in particular, $\de$ is a notched arc since $\de \notin T$,
\begin{itemize}\setlength{\leftskip}{-5mm}
 \item if $\hat{\de}$ is a $1$-notched arc, ${\sf M}'_T(\hat{\de})$ is a $1$-punctured loop corresponding to $\hat{\de}$;
 \item if $\hat{\de}$ is a $2$-notched arc, ${\sf M}'_T(\hat{\de})$ is a pair of cycles which surround each endpoint of $\hat{\de}$ and do not include any punctures in their curves (we call this circle a \emph{$1$-punctured cycle}).
\end{itemize}
 Finally, we change tags of ${\sf M}'_T(\hat{\de})$ at $p$ again if $\de$ and a tagged arc of $T$ are tagged notched at $p$. We say that the result is a {\it modified tagged arc of $\de$ with respect to $T$}, and denote it by ${\sf M}_T(\de)$ (see Figure \ref{hat} and Example \ref{ex}). 
  
\begin{figure}[ht]
\begin{tikzpicture}[baseline=-5mm]
 \coordinate (0) at (0,0);
 \coordinate (1) at (0,-1);
 \draw (0) to (1);
 \draw (0) to node[pos=0.2]{\rotatebox{90}{\footnotesize $\bowtie$}} node[above]{$\de$} (-1,0);
 \draw (0) to [out=80,in=100,relative] node[pos=0.2]{\rotatebox{40}{\footnotesize $\bowtie$}} node[right]{in $T$} (1);
 \fill(0) circle (0.7mm); \fill (1) circle (0.7mm);
\end{tikzpicture}
     $\hspace{3mm}\rightarrow\hspace{3mm}$
\begin{tikzpicture}[baseline=-5mm]
 \coordinate (0) at (0,0);
 \coordinate (1) at (0,-1);
 \draw (0) to (1);
 \draw (0) to node[above]{$\hat{\de}$} (-1,0);
 \draw (0) to [out=80,in=100,relative] node[pos=0.2]{\rotatebox{40}{\footnotesize $\bowtie$}} (1);
 \fill(0) circle (0.7mm); \fill (1) circle (0.7mm);
\end{tikzpicture}
     $\hspace{3mm}\rightarrow\hspace{3mm}$
\begin{tikzpicture}[baseline=-5mm]
 \coordinate (0) at (0,0);
 \coordinate (1) at (0,-1);
 \draw (0) to (1);
 \draw (0) to node[pos=0.2]{\rotatebox{90}{\footnotesize $\bowtie$}} node[above]{${\sf M}_T(\de)$} (-1,0);
 \draw (0) to [out=80,in=100,relative] node[pos=0.2]{\rotatebox{40}{\footnotesize $\bowtie$}} (1);
 \fill(0) circle (0.7mm); \fill (1) circle (0.7mm);
\end{tikzpicture}
\caption{From $\de$ to $\hat{\de}$ and ${\sf M}_T(\de)$}\label{hat}
\end{figure}

\begin{figure}[ht]
\begin{tikzpicture}[baseline=6mm]
 \coordinate (0) at (0,0);
 \coordinate (1) at (90:1.3);
 \draw[blue] (0) to node[pos=0.15]{\footnotesize $\bowtie$} (1);
 \fill(0) circle (0.7mm);
\end{tikzpicture}
     $\hspace{3mm}\rightarrow\hspace{3mm}$
\begin{tikzpicture}[baseline=6mm]
 \coordinate (0) at (0,0);
 \coordinate (1) at (90:1.3);
 \draw[blue] (0,0.3) to (1); \draw[blue](0) circle(3mm); 
 \fill(0) circle (0.7mm);
\end{tikzpicture}
     \hspace{10mm}
\begin{tikzpicture}[baseline=6mm]
 \coordinate (0) at (0,0);
 \coordinate (l) at (140:1.3);
 \coordinate (r) at (40:1.3);
 \draw (l)--(0)--(r) (l)--(r);
 \draw[blue] (0) .. controls (-0.25,0.3) and (-0.25,1) ..node[pos=0.3]{\rotatebox{20}{\footnotesize $\bowtie$}} (-0.25,1.3);
 \draw[blue] (0) .. controls (0.25,0.3) and (0.25,1) ..node[pos=0.3]{\rotatebox{-15}{\footnotesize $\bowtie$}} (0.25,1.3);
 \fill(0) circle (0.7mm);
\end{tikzpicture}
     $\hspace{3mm}\rightarrow\hspace{3mm}$
\begin{tikzpicture}[baseline=6mm]
 \coordinate (0) at (0,0);
 \coordinate (l) at (140:1.3);
 \coordinate (r) at (40:1.3);
 \coordinate (l') at (120:0.5);
 \coordinate (r') at (60:0.5);
 \draw (l)--(0)--(r) (l)--(r);
 \draw[blue] (-0.25,1.3)--(l') (0.25,1.3)--(r'); \draw[blue](0) circle(3mm);
 \draw[blue] (l') arc (120:420:0.5);
 \fill(0) circle (0.7mm);
\end{tikzpicture}
     \hspace{10mm}
\begin{tikzpicture}[baseline=0mm]
 \coordinate (0) at (0,0);
 \coordinate (lu) at (140:1);
 \coordinate (ru) at (40:1);
 \coordinate (ld) at (-140:1);
 \coordinate (rd) at (-40:1);
 \coordinate (u) at (90:1);
 \coordinate (d) at (-90:1);
 \draw[blue] (u)--node[pos=0.8]{\footnotesize $\bowtie$}(0)--node[pos=0.25]{\footnotesize $\bowtie$}(d);
 \draw (lu)--(0)--(ru) (ld)--(0)--(rd);
 \fill(0) circle (0.7mm);
\end{tikzpicture}
     $\hspace{3mm}\rightarrow\hspace{3mm}$
\begin{tikzpicture}[baseline=0mm]
 \coordinate (0) at (0,0);
 \coordinate (lu) at (140:1);
 \coordinate (ru) at (40:1);
 \coordinate (ld) at (-140:1);
 \coordinate (rd) at (-40:1);
 \coordinate (u) at (90:1);
 \coordinate (d) at (-90:1);
 \draw[blue] (u)--(0,0.5) (0,-0.5)--(d);
 \draw (lu)--(0)--(ru) (ld)--(0)--(rd); \draw[blue](0) circle(3mm); \draw[blue](0) circle(5mm);
 \fill(0) circle (0.7mm);
\end{tikzpicture}
\caption{Modifications ${\sf M}'_T(\hat{\de})$ of $\hat{\de}$}\label{modification}
\end{figure}

\begin{example}\label{ex}
 We consider the following tagged triangulation $T$ and tagged arcs $\de_1$, $\de_2$ and $\de_3$:
\[
T=
\begin{tikzpicture}[baseline=0mm,scale=0.7]
 \coordinate (u) at (0,2);
 \coordinate (l) at (-150:2);
 \coordinate (r) at (-30:2);
 \coordinate (cd) at (0,-1);
 \coordinate (cl) at (150:1);
 \coordinate (cr) at (30:1);
 \draw (0,0) circle (2);
 \draw (r)--(cr)--(u)--(cl)--(cr);
 \draw (r)--(cl)--(l);
 \draw (cl) .. controls (-145:2) and (-100:2) .. (r);
 \draw (cl) to (cd);
 \draw (cl) to [out=-50,in=-100,relative] node[pos=0.85]{\rotatebox{80}{\footnotesize $\bowtie$}} (cd);
 \fill(u) circle (1mm); \fill(l) circle (1mm); \fill(r) circle (1mm); \fill(cd) circle (1mm); \fill(cl) circle (1mm); \fill(cr) circle (1mm);
\end{tikzpicture}
     \hspace{10mm}
\begin{tikzpicture}[baseline=0mm,scale=0.7]
 \coordinate (u) at (0,2);
 \coordinate (l) at (-150:2);
 \coordinate (r) at (-30:2);
 \coordinate (cd) at (0,-1);
 \coordinate (cl) at (150:1);
 \coordinate (cr) at (30:1);
 \draw (0,0) circle (2);
 \draw (cr) .. controls (60:2) and (120:2) .. node[pos=0.15]{\rotatebox{25}{\footnotesize $\bowtie$}} (150:1.55);
 \draw (150:1.55) .. controls (-180:1.7) and (-130:1.8) .. node[fill=white,inner sep=1]{$\de_1$} (-100:1.5);
 \draw (-100:1.5) .. controls (-60:1.8) and (0:1.1) .. node[pos=0.7]{\rotatebox{0}{\footnotesize $\bowtie$}} (cr);
 \draw (cl) to node[pos=0.15]{\rotatebox{90}{\footnotesize $\bowtie$}} node[pos=0.85]{\rotatebox{90}{\footnotesize $\bowtie$}} node[fill=white,inner sep=0.1]{$\de_3$} (cr);
 \draw (cd) to node[pos=0.15]{\rotatebox{150}{\footnotesize $\bowtie$}} node[pos=0.75]{\rotatebox{150}{\footnotesize $\bowtie$}} node[fill=white,inner sep=0.1]{$\de_2$} (cr);
 \fill(u) circle (1mm); \fill(l) circle (1mm); \fill(r) circle (1mm); \fill(cd) circle (1mm); \fill(cl) circle (1mm); \fill(cr) circle (1mm);
\end{tikzpicture}
\]
 Then the corresponding modified tagged arcs ${\sf M}_T(\de_i)$ with respect to $T$ are given as follows:
\[
\begin{tikzpicture}[baseline=0mm,scale=0.7]
 \coordinate (u) at (0,2);
 \coordinate (l) at (-150:2);
 \coordinate (r) at (-30:2);
 \coordinate (cd) at (0,-1);
 \coordinate (cl) at (150:1);
 \coordinate (cr) at (30:1);
 \draw (0,0) circle (2); \draw(cr) circle(3mm); \draw(cr) circle(5mm);
 \draw (cr)+(90:0.5) .. controls (70:2) and (120:2) .. (150:1.55);
 \draw (150:1.55) .. controls (-180:1.7) and (-130:1.8) .. (-100:1.5);
 \draw (cr)+(-75:0.5) .. controls (0:1.1) and (-60:1.8) .. (-100:1.5);
 \fill(u) circle (1mm); \fill(l) circle (1mm); \fill(r) circle (1mm); \fill(cd) circle (1mm); \fill(cl) circle (1mm); \fill(cr) circle (1mm);
 \node at(0,-0.3) {${\sf M}_T(\de_1)$};
\end{tikzpicture}     \hspace{10mm}
\begin{tikzpicture}[baseline=0mm,scale=0.7]
 \coordinate (u) at (0,2);
 \coordinate (l) at (-150:2);
 \coordinate (r) at (-30:2);
 \coordinate (cd) at (0,-1);
 \coordinate (cl) at (150:1);
 \coordinate (cr) at (30:1);
 \draw (0,0) circle (2); \draw(cr) circle(3mm);
 \draw (cr)+(-120:0.3) to node[pos=0.75]{\rotatebox{150}{\footnotesize $\bowtie$}} (cd);
 \fill(u) circle (1mm); \fill(l) circle (1mm); \fill(r) circle (1mm); \fill(cd) circle (1mm); \fill(cl) circle (1mm); \fill(cr) circle (1mm);
 \node at(-0.5,-0.1) {${\sf M}_T(\de_2)$};
\end{tikzpicture}     \hspace{10mm}
\begin{tikzpicture}[baseline=0mm,scale=0.7]
 \coordinate (u) at (0,2);
 \coordinate (l) at (-150:2);
 \coordinate (r) at (-30:2);
 \coordinate (cd) at (0,-1);
 \coordinate (cl) at (150:1);
 \coordinate (cr) at (30:1);
 \draw (0,0) circle (2); \draw(cl) circle(3mm); \draw(cr) circle(3mm);
 \fill(u) circle (1mm); \fill(l) circle (1mm); \fill(r) circle (1mm); \fill(cd) circle (1mm); \fill(cl) circle (1mm); \fill(cr) circle (1mm);
 \node at(0,-0.2) {${\sf M}_T(\de_3)$};
\end{tikzpicture}
\]
\end{example}

 We can define the intersection number of a modified tagged arc ${\sf m}$ and a tagged arc $\de$ in the same way as of tagged arcs, denote by $\Int({\sf m},\de)$. Although the map ${\sf M}_T$ may seem strange, it is defined so as to satisfy the following properties.

\begin{proposition}\label{mT}
 $(1)$ For a tagged arc $\de$ of $(S,M)$, we have $\Int(T,\de)=\Int({T,\sf M}_T(\de))$.\par
 $(2)$ The map ${\sf M}_T$ restricting to the set
\[
 A := \{\text{tagged arcs $\de$ of $(S,M)$ $\mid$ $\de \notin T$ and ${\sf M}_T(\de)$ is not a pair of $1$-punctured cycles}\}
\]
 is injective. Moreover, if ${\sf M}_T(\de)={\sf M}_T(\epsilon)$ for $\de \in A$ and any tagged arc $\epsilon \notin T$, then $\de=\epsilon$ holds.
\end{proposition}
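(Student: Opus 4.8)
The plan is to prove the two assertions separately. For (1) I will reduce, via the tag-changing moves, to a local intersection count in a disk around a single puncture; for (2) I will show that the construction of ${\sf M}_T$ can be run backwards except in the one case excluded from $A$.

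Let me start with (1). Write $P$ for the set of punctures at which both $\de$ and some arc of $T$ are tagged notched, and for a set $P$ of punctures let $\cdot^{(P)}$ denote changing tags at every puncture of $P$, so that $\hat{\de}=\de^{(P)}$ and ${\sf M}_T(\de)=\bigl({\sf M}'_T(\hat{\de})\bigr)^{(P)}$. The first step is to see that these tag changes cost nothing. By condition $(\Diamond)$ together with the puzzle-piece decomposition of $T$ recalled in Subsection~\ref{puzzle}, every $p\in P$ is the inner puncture of a $1$-puncture or $2$-puncture piece, so the arcs of $T$ incident to $p$ form exactly one pair of conjugate arcs; changing tags at $p$ interchanges the two arcs of that pair and fixes all other arcs of $T$, hence $T^{(P)}=T$. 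Since $\Int(\alpha^{(p)},\beta^{(p)})=\Int(\alpha,\beta)$, which holds for modified arcs as well because the definition is the same, applying $\cdot^{(P)}$ to both entries gives $\Int(T,\de)=\Int(T,\hat{\de})$ and $\Int(T,{\sf M}_T(\de))=\Int(T,{\sf M}'_T(\hat{\de}))$. Thus (1) reduces to proving $\Int(t,\hat{\de})=\Int(t,{\sf M}'_T(\hat{\de}))$ for every $t\in T$. This I would verify case by case along the bullets defining ${\sf M}'_T(\hat{\de})$: in each case the modification is supported in a small disk around one puncture $p$ (around one or both endpoints of $\overline{\de}$ when $\overline{\de}\in T$), so only the contributions of $\Int(t,\cdot)$ localized there can change. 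When $\hat{\de}$ is notched at $p$, the hat operation forces $T$ to have no notched arc at $p$, so every arc of $T$ incident to $p$ is plain; the notched end of $\hat{\de}$ then adds exactly $m_p(t)$ to the term $B$ of $\Int(t,\hat{\de})$, where $m_p(t)$ is the number of ends of $t$ at $p$, while in ${\sf M}'_T(\hat{\de})$ that end is replaced by a plain end together with a $1$-punctured cycle around $p$, which crosses $t$ in exactly $m_p(t)$ points and forms no conjugate pair, so the counts agree. The remaining cases (the once-punctured loop and the pair of $1$-punctured cycles when $\overline{\de}\in T$, and the two double-circle configurations of Figure~\ref{modification}) are handled by the same bookkeeping, now also using that each $-1$ coming from a conjugate pair $\{\hat{\de},t\}$ is exactly balanced by a crossing that disappears when the notch is resolved into a cycle. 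The main labor of (1) is this analysis, and the subtle cases are the $2$-notched loops, where one must invoke the dichotomy ``both sides of $\hat{\de}$ lie in the same puzzle piece or not'' to know which configuration of Figure~\ref{modification} occurs and to count crossings correctly.

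For (2), note first that the ``moreover'' statement is a formal consequence of injectivity of ${\sf M}_T|_A$: if ${\sf M}_T(\de)={\sf M}_T(\epsilon)$ with $\de\in A$ and $\epsilon\notin T$, then ${\sf M}_T(\epsilon)={\sf M}_T(\de)$ is not a pair of $1$-punctured cycles, so $\epsilon\in A$, whence $\de=\epsilon$. To prove injectivity on $A$ I would construct a left inverse. Undoing the outer tag change is unambiguous: since each output ${\sf M}'_T(\hat{\de})$ is a plain curve (the construction replaces every notched end by a cycle), the notched ends of ${\sf M}_T(\de)$ occur exactly at the punctures of $P$, and each of these carries a notched arc of $T$; hence $P=\{p : T\text{ has a notched arc at }p\text{ and }{\sf M}_T(\de)\text{ has a notched end at }p\}$ is read off from ${\sf M}_T(\de)$ and $T$, and flipping tags there recovers ${\sf M}'_T(\hat{\de})$. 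It then remains to recover $\hat{\de}$ from ${\sf M}'_T(\hat{\de})$, and here the point is that, apart from the case ${\sf M}'_T(\hat{\de})=\hat{\de}$ where $\hat{\de}$ is already plain, none of the outputs is a tagged arc: it is a plain arc with a $1$-punctured cycle glued at an end, or a loop cutting out a once-punctured monogon, or one of the double-circle configurations of Figure~\ref{modification}, or a pair of $1$-punctured cycles, and these types are pairwise distinguishable by inspection. Recognizing the type determines which bullet was applied, and running that bullet backwards (erase the attached cycle and re-notch the end; replace the once-punctured loop by the $1$-notched arc with that core; and so on) recovers $\hat{\de}$ uniquely in every case but the pair of $1$-punctured cycles, which records only the two punctures it encircles and not which arc of $T$ joins them, exactly the case removed in the definition of $A$. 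Composing these reconstructions yields the desired left inverse on $A$. The main obstacle in (2) is the clean organization of this case-by-case reversal, in particular checking that the two tag changes genuinely cancel and that no two bullets of the definition of ${\sf M}'_T$ can produce isotopic curves.
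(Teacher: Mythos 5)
Your argument is correct and is exactly the kind of direct verification the paper intends: the published proof of Proposition \ref{mT} is the single sentence that the assertions follow from the definitions of intersection numbers and of ${\sf M}_T$, so your reduction via $T^{(P)}=T$ (using $(\Diamond)$ to see that changing tags at a puncture of $P$ merely swaps the two members of a conjugate pair) and the local count of $A$- versus $B$-contributions at each modified end simply supply the details the authors omit. The one genuinely delicate point you isolate for part (2) --- that a pair of $1$-punctured cycles remembers only the two encircled punctures and not the underlying arc --- is precisely the failure of injectivity that the set $A$ is designed to exclude (cf.\ Remark \ref{not1to1}), so no different route is being taken.
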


\begin{proof}
 The assertions follow from the definition of intersection numbers and the map ${\sf M}_T$.
\end{proof}

\begin{remark}\label{not1to1}
 For a tagged arc $\de \notin T \cup A$ of $(S,M)$, ${\sf M}(\de)$ does not always correspond to $\de$ bijectively. Indeed, we consider the following tagged triangulation $T$ and tagged arcs $\de$, $\epsilon$:
\[
 T=
\begin{tikzpicture}[baseline=0mm,scale=0.7]
 \coordinate (u) at (0,2);
 \coordinate (l) at (-150:2);
 \coordinate (r) at (-30:2);
 \coordinate (cd) at (0,-1);
 \coordinate (cl) at (150:1);
 \coordinate (cr) at (30:1);
 \draw (0,0) circle (2);
 \draw (r)--(cr)--(u)--(cl)--(cr);
 \draw (cl)--(l);
 \draw (cl) .. controls (-120:2.5) and (-100:2) .. (r);
 \draw (cl)--(cd)--(cr);
  \draw (cr) .. controls (-75:2.1) and (-105:2.1) .. (cl);
 \fill(u) circle (1mm); \fill(l) circle (1mm); \fill(r) circle (1mm); \fill(cd) circle (1mm); \fill(cl) circle (1mm); \fill(cr) circle (1mm);
\end{tikzpicture}
     \hspace{10mm}
\begin{tikzpicture}[baseline=0mm,scale=0.7]
 \coordinate (u) at (0,2);
 \coordinate (l) at (-150:2);
 \coordinate (r) at (-30:2);
 \coordinate (cd) at (0,-1);
 \coordinate (cl) at (150:1);
 \coordinate (cr) at (30:1);
 \draw (0,0) circle (2);
 \draw (cr) to node[pos=0.15]{\rotatebox{90}{\footnotesize $\bowtie$}} node[pos=0.85]{\rotatebox{90}{\footnotesize $\bowtie$}}  (cl);
 \draw (cr) .. controls (-75:2.1) and (-105:2.1) .. node[pos=0.05]{\rotatebox{-10}{\footnotesize $\bowtie$}} node[pos=0.95]{\rotatebox{10}{\footnotesize $\bowtie$}}  (cl);
 \fill(u) circle (1mm); \fill(l) circle (1mm); \fill(r) circle (1mm); \fill(cd) circle (1mm); \fill(cl) circle (1mm); \fill(cr) circle (1mm);
 \node at(0,-1.6) {$\epsilon$};
 \node at(0,0.8) {$\de$};
\end{tikzpicture}
\]  
 Then the corresponding modified tagged arcs ${\sf M}_T(\de)$ and ${\sf M}_T(\epsilon)$ with respect to $T$ are given as follows:
\[
\begin{tikzpicture}[baseline=0mm,scale=0.7]
 \coordinate (u) at (0,2);
 \coordinate (l) at (-150:2);
 \coordinate (r) at (-30:2);
 \coordinate (cd) at (0,-1);
 \coordinate (cl) at (150:1);
 \coordinate (cr) at (30:1);
 \draw (0,0) circle (2); \draw(cl) circle(3mm); \draw(cr) circle(3mm);
 \fill(u) circle (1mm); \fill(l) circle (1mm); \fill(r) circle (1mm); \fill(cd) circle (1mm); \fill(cl) circle (1mm); \fill(cr) circle (1mm);
 \node at(0,-0.2) {${\sf M}_T(\de)={\sf M}_T(\epsilon)$};
\end{tikzpicture}
\]
\end{remark}

 The following theorem is a key of the proofs of Theorems \ref{main} and \ref{ec}.

\begin{theorem}\label{intinj}
 If modified tagged arcs ${\sf m}$ and ${\sf m}'$ with respect to $T$ have $\Int(T,{\sf m})=\Int(T,{\sf m}')$, then ${\sf m}={\sf m}'$.
\end{theorem}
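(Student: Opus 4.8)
The plan is to reconstruct a modified tagged arc from its intersection vector with $T$ by cutting it along $T$, reading off its elementary pieces from the intersection numbers, and showing that both the pieces and the way they are reassembled are forced. Throughout I put ${\sf m}$ and ${\sf m}'$ in minimal position with respect to $T$; then for each $t\in T$ the entry $\Int(t,{\sf m})$ records the number of points in which ${\sf m}$ meets $t$ in $S\setminus M$, corrected by an explicit local term at each puncture carrying a notched arc of $T$ (a term which is itself determined by how ${\sf m}$ behaves near that puncture). Since $T$ satisfies $(\Diamond)$, the closure of $S\setminus T$ is a disjoint union of puzzle pieces --- triangle pieces, $1$-puncture pieces and $2$-puncture pieces, as in Figure \ref{pp} --- unless $(S,M)$ is the exceptional $4$-punctured sphere with $T=T_3$, which I treat separately by listing its finitely many modified tagged arcs directly. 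Cutting ${\sf m}$ along $T$ then produces a finite collection of \emph{segments}, each contained in a single puzzle piece, with endpoints on its bounding arcs or spiralling at its interior puncture(s), together with possibly some closed components (the $1$-punctured cycles).

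\emph{Segments and counting} (Section \ref{segments}). I would enumerate, for each of the three puzzle-piece types, all isotopy classes of segments that can occur in it: for a triangle piece the three ``corner'' segments; for a $1$-puncture piece and a $2$-puncture piece, the corner segments together with the segments that run to, or wind around, an interior puncture, plus the closed $1$-punctured cycles. For each segment type I record its intersection signature, i.e. how many times it meets each bounding arc of its puzzle piece. The key bookkeeping claim is that, because every arc $t$ of $T$ borders at most two puzzle pieces and every endpoint of a segment on $t$ is one of the $\Int(t,{\sf m})$ crossing points, the numbers $\Int(t,{\sf m})$ over $t\in T$ determine the number of segments of each type; here one also uses that a modified tagged arc is connected (or is a prescribed pair of cycles) to eliminate the remaining ambiguity. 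Consequently ${\sf m}$ and ${\sf m}'$, having $\Int(T,{\sf m})=\Int(T,{\sf m}')$, are cut by $T$ into the same multiset of segments.

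\emph{Reassembly} (Section \ref{pfintinj}). It remains to see that a modified tagged arc is determined by this multiset of segments. The segments are glued along the arcs of $T$ by matching up their endpoints, so I must show the matching is unique, equivalently that the linear or cyclic order in which the segment-ends sit along each $t\in T$ is forced. This is where minimality is used: any alternative matching either creates a bigon between ${\sf m}$ and some $t$, contradicting minimal position, or fails to close the segments up into a single embedded curve of the required type, or yields a curve with a strictly different intersection vector. In parallel one disposes of the low-complexity cases: when ${\sf m}$ meets no arc of $T$ it lies in one puzzle piece and is one of finitely many explicit curves; when it meets $T$ in few points one checks reassembly by hand; and one matches the $1$-punctured loops, the $1$-punctured cycles, and pairs of such against the local pictures in Figures \ref{modification} and \ref{pp}.

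The main obstacle is the sheer length and exhaustiveness of the second and third steps rather than any single conceptual difficulty: once interior punctures enter, the list of segment types and their intersection signatures is long, and the uniqueness-of-gluing argument must be run, case by case, inside and around the $1$-puncture and $2$-puncture pieces. The subtlest point there is to distinguish a segment that genuinely winds around a puncture from a \emph{parallel family} of ordinary segments that collectively cross the same bounding arcs the same number of times; ruling out such a coincidence --- and, more generally, checking that no two distinct reassemblies of one segment multiset produce modified tagged arcs with equal intersection vectors --- is precisely the lengthy case analysis carried out in Sections \ref{segments} and \ref{pfintinj}.
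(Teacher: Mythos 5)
Your proposal follows essentially the same route as the paper: decompose $T$ into puzzle pieces, tabulate all possible (modified) segments and their local intersection signatures, show by a case analysis that the restriction of the intersection vector to each puzzle piece determines the multiset of segments there uniquely, glue the pieces back together, and treat $T_3$ separately. You also correctly isolate the genuine difficulty --- distinguishing a winding segment from a parallel family of ordinary segments with the same total signature --- which is exactly what the exhaustive enumeration in Sections \ref{segments} and \ref{pfintinj} resolves.
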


 We will prove Theorem \ref{intinj} in Section \ref{pfintinj}.

\begin{corollary}\label{taginj}
 If tagged arcs $\de$ and $\epsilon$ in $A$ have $\Int(T,\de)=\Int(T,\epsilon)$, then $\de=\epsilon$.
\end{corollary}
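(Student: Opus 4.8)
The plan is to deduce Corollary~\ref{taginj} directly from Theorem~\ref{intinj} together with Proposition~\ref{mT}, by transporting the hypothesis from the tagged arcs $\de,\epsilon$ to their modifications ${\sf M}_T(\de),{\sf M}_T(\epsilon)$.

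First I would apply Proposition~\ref{mT}(1) to each of $\de$ and $\epsilon$, which gives $\Int(T,{\sf M}_T(\de))=\Int(T,\de)$ and $\Int(T,{\sf M}_T(\epsilon))=\Int(T,\epsilon)$. Combining these with the hypothesis $\Int(T,\de)=\Int(T,\epsilon)$ yields $\Int(T,{\sf M}_T(\de))=\Int(T,{\sf M}_T(\epsilon))$. Since $\de,\epsilon\notin T$, both ${\sf M}_T(\de)$ and ${\sf M}_T(\epsilon)$ are modified tagged arcs with respect to $T$, so Theorem~\ref{intinj} applies and gives ${\sf M}_T(\de)={\sf M}_T(\epsilon)$. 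Finally, because $\de\in A$ and $\epsilon\notin T$ (as $\epsilon\in A$), the injectivity statement in Proposition~\ref{mT}(2)---more precisely its ``moreover'' clause---forces $\de=\epsilon$, which is the desired conclusion.

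There is essentially no obstacle inside this argument itself: it is a formal chain built from results already in hand. The only points that need a moment's care are bookkeeping ones: checking that $\de,\epsilon\in A$ guarantees $\de,\epsilon\notin T$ so that ${\sf M}_T$ is defined and its images are legitimate modified tagged arcs, and being sure to invoke the stronger ``moreover'' part of Proposition~\ref{mT}(2) rather than bare injectivity of ${\sf M}_T|_A$, since a priori we only know ${\sf M}_T(\de)={\sf M}_T(\epsilon)$ with one of the two arcs known to lie in $A$. The genuine difficulty of the whole circle of ideas is concentrated entirely in Theorem~\ref{intinj}, whose proof is postponed to Section~\ref{pfintinj} and carried out there by a long case analysis; Corollary~\ref{taginj} is simply the payoff of that theorem once the modification machinery of this section is in place.
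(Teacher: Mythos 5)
Your argument is correct and is exactly the paper's proof: transport the hypothesis via Proposition \ref{mT}(1), apply Theorem \ref{intinj} to conclude ${\sf M}_T(\de)={\sf M}_T(\epsilon)$, and finish with the injectivity statement of Proposition \ref{mT}(2). The extra care you take about invoking the ``moreover'' clause is fine but not essential here, since both $\de$ and $\epsilon$ are assumed to lie in $A$.
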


\begin{proof}
 Proposition \ref{mT}(1) implies that $\Int(T,{\sf M}_T(\de))=\Int(T,{\sf M}_T(\epsilon))$. By Theorem \ref{intinj} and Proposition \ref{mT}(2), we have $\de=\epsilon$.
\end{proof}

 These results provide the proofs of Theorems \ref{main} and \ref{ec}.

\begin{proof}[Proof of Theorem \ref{main}]
 By changing tags, we can assume that $T$ satisfies $(\Diamond)$. Let $T'=\{\de_1,\ldots,\de_n\}$ and $T''=\{\epsilon_1,\ldots,\epsilon_n\}$ be tagged triangulations of $(S,M)$ such that $\Int(T,\de_i)=\Int(T,\epsilon_i)$ for any $i$. We set $V=(v_1 \cdots v_n)=\Int(T,T')$, where $v_i=\Int(T,\de_i) \in \bZ_{\ge 0}^n$. Without loss of generality, we assume that $\de_i \in A$ for $i \in \{1,\ldots,k\}$ and $\de_j \notin A$ for $j \in \{k+1,\ldots,n\}$, that is, either $\de_j, \epsilon_j \in T$ or ${\sf M}_T(\de_j)={\sf M}_T(\epsilon_j)$ is a pair of $1$-punctured cycles by Theorem \ref{intinj}. Corollary \ref{taginj} implies that $\de_i=\epsilon_i$ for $i \in \{1,\ldots,k\}$.

 If $T' \neq T''$, then there exist $f, g \in \{k+1,\dots,n\}$ such that $\Int(\de_f,\epsilon_g) \neq 0$. Otherwise, it conflicts with the maximality of $T'$. Since $\overline{\de_f}$ and $\overline{\epsilon_g}$ are contained in $T$, $\de_f$ and $\epsilon_g$ must have different tags at the common endpoint. Without loss of generality, we assume that $\de_f$ is contained in $T$ and ${\sf M}_T(\de_g)={\sf M}_T(\epsilon_g)$ is a pair of $1$-punctured cycles. Since $\de_f$ and $\de_g$ have the common endpoint and $\Int(\de_f,\de_g)=0$, $\de_f$ is a $1$-notched arc of $T$ by $(\Diamond)$. Then $\hat{\de_g}$ is not a $2$-notched arc, thus it is contradictory to the fact that ${\sf M}_T(\epsilon_g)$ is a pair of $1$-punctured cycles. This finishes the proof.
\end{proof}

\begin{proof}[Proof of Theorem \ref{ec}]
 By changing tags, we can assume that $T$ satisfies $(\Diamond)$. First, we prove ``if'' part. Let $\de$ and $\epsilon$ be tagged arcs with a common non-zero intersection vector $\Int(T,\de)=\Int(T,\epsilon)$ with respect to $T$. Then $\de$ and $\epsilon$ are not contained in $T$ by definition of intersection vectors. By Corollary \ref{taginj}, it suffice to show that if ${\sf M}_T(\de)$ is a pair of $1$-punctured cycles, then $\de=\epsilon$. In this case, $\de$ and $\epsilon$ are $2$-notched arcs such that $\overline{\de}$ and $\overline{\epsilon}$ are plain arcs of $T$ such that both endpoints of $\overline{\de}$ correspond to ones of $\overline{\epsilon}$ since ${\sf M}_T(\de)={\sf M}_T(\epsilon)$ by Theorem \ref{intinj}. Therefore, we have $\de=\epsilon$ by the assumption.

 Second, we prove ``only if'' part. Suppose that $T$ has a pair of different plain arcs $\g$ and $\g'$ such that both endpoints of $\g$ correspond to ones of $\g'$ which are punctures. Let $\de$ and $\epsilon$ be $2$-notched arcs such that $\overline{\de}=\g$ and $\overline{\epsilon}=\g'$. Then we have $\de\neq\epsilon$ and $\Int(T,\de)=\Int(T,\epsilon)$ which is not zero, that is, $T$ does not detect tagged arcs.
\end{proof}

\section{Proof of Theorem \ref{class}}\label{seclass}
 First of all, we prove Theorem \ref{class}(3).

\begin{proof}[Proof of Theorem \ref{class}(3)]
 It is easy to show that for $(S,M)$ as in Theorem \ref{class}(3), any tagged triangulation of $(S,M)$ detects tagged arcs by Theorem \ref{ec}. Conversely, if $(S,M)$ is not one of the above cases, a part of $(S,M)$ must have one of the pairs of plain arcs $\de$ and $\epsilon$ as in Table \ref{part}. Then a tagged triangulation $T$ of $(S,M)$ including $\de$ and $\epsilon$ does not detect tagged arcs by Theorem \ref{ec}.
\end{proof}

\renewcommand{\arraystretch}{1.7}
{\begin{table}[ht]
\begin{tabular}{c|c|c|c|c|c}
 $g$ & \multicolumn{4}{c|}{0} & $\geq1$ \\\hline
 $b$ & $0$ & $1$ & $2$ & $\geq3$ & any \\\hline
 $p$ & $\geq4$ & $\geq3$ & $\geq2$ & $\geq1$ & $\geq1$ \\\hline
 $\de, \epsilon$ &
\begin{tikzpicture}[baseline=0mm,scale=0.5]
 \coordinate (u) at (0,	0.5);
 \coordinate (d) at (0,-0.5);
 \coordinate (l) at (-1,0);
 \coordinate (r) at (1,0);
 \draw[semithick] (l)-- node[fill=white,inner sep=1]{$\epsilon$} (r);
 \draw[semithick] (l) .. controls (-1,1.3) and (1,1.3) .. node[above]{$\de$} (r);
 \fill(u) circle (1.4mm); \fill(l) circle (1.4mm); \fill(r) circle (1.4mm); \fill(d) circle (1.4mm);
\end{tikzpicture} &
\begin{tikzpicture}[baseline=0mm,scale=0.5]
 \coordinate (l) at (-1,0);
 \coordinate (r) at (1,0);
 \coordinate (c) at (0,0);
 \draw (0,0) circle (2);
 \draw[semithick] (l) .. controls (-1,1) and (1,1) .. node[above]{$\de$} (r);
 \draw[semithick] (l) .. controls (-1,-1) and (1,-1) .. node[below]{$\epsilon$} (r);
 \fill(c) circle (1.4mm); \fill(l) circle (1.4mm); \fill(r) circle (1.4mm);
 \node at(0,2.2) {};
\end{tikzpicture} &
\begin{tikzpicture}[baseline=0mm,scale=0.5]
 \coordinate (l) at (-1.2,0);
 \coordinate (r) at (1.2,0);
 \draw (0,0) circle (2);
 \draw (0) circle (0.6);
 \draw[semithick] (l) .. controls (-1,1.2) and (1,1.2) .. node[above]{$\de$} (r);
 \draw[semithick] (l) .. controls (-1,-1.2) and (1,-1.2) .. node[below]{$\epsilon$} (r);
 \fill(l) circle (1.4mm); \fill(r) circle (1.4mm);
\end{tikzpicture} &
\begin{tikzpicture}[baseline=0mm,scale=0.5]
 \coordinate (c) at (0,0);
 \draw (0,0) circle (2);
 \draw (-1,0) circle (0.5); \draw (1,0) circle (0.5);
 \draw[semithick] (c) .. controls (-0.7,1.5) and (-1.8,1) .. node[above,pos=0.3]{$\de$} (-1.8,0);
 \draw[semithick] (c) .. controls (-0.7,-1.5) and (-1.8,-1) .. (-1.8,0);
 \draw[semithick] (c) .. controls (0.7,1.5) and (1.8,1) .. node[above,pos=0.3]{$\epsilon$} (1.8,0);
 \draw[semithick] (c) .. controls (0.7,-1.5) and (1.8,-1) .. (1.8,0);
 \fill(c) circle (1.4mm);
\end{tikzpicture} &
\begin{tikzpicture}[baseline=0mm,scale=0.5]
\coordinate (u) at (0,1.5);
\coordinate (d) at (0,-1.5);
\coordinate (m1) at (-3,0);
\coordinate (m2) at (-1.5,0.4);
\coordinate (m3) at (1.5,0.4);
\coordinate (m4) at (3,0);
\coordinate(m5)at (-1.1,0.15);
\coordinate(m6)at (1.1,0.15);
\coordinate (p) at (-0.4,-0.8);
\coordinate (m7) at (0,-0.15);
 \fill(p) circle(1.4mm);
 \draw (u) to [out=180,in=90] (m1);
 \draw (m1) to [out=-90,in=180] (d);
 \draw (d) to [out=0,in=-90] (m4);
 \draw (m4) to [out=90,in=0] (u);
 \draw (m2) to [out=-40,in=-140] (m3); 
 \draw (m5) to [out=20,in=160] (m6); 
 \draw[semithick] (d) to [out=170,in=-170] (m7);
 \draw[semithick,dashed] (d) to [out=10,in=-10] (m7);
 \draw[semithick] (p).. controls (-3,-0.5)and (-3, 1) .. (0,1);
 \draw[semithick] (0,1).. controls (3,1)and (3, -0.5) .. (p);
\node at(0,-2) {$\epsilon$};
 \node at(2.5,0) {$\de$};
\end{tikzpicture}
  \end{tabular}\vspace{5mm}
 \caption{Tagged arcs $\de$ and $\epsilon$ connecting two (possibly same) common punctures such that $\overline{\de} \neq \overline{\epsilon}$, where $g$ is the genus, $b$ is the number of components of the boundary and $p$ is the number of punctures in $(S,M)$}\label{part}
\end{table}}

 We consider the case that $S$ is not closed. The following lemma is basic.

\begin{lemma}\label{bpconn}
 If $S$ is not closed, then there is a tagged triangulation of $(S,M)$ whose any tagged arc is a plain arc with at least one marked point on the boundary of $S$ as its endpoints.
\end{lemma}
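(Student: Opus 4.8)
The plan is to build an \emph{ideal} triangulation $\mathcal{T}$ of $(S,M)$ that has no self-folded triangle and in which every arc is incident to a marked point of $\partial S$. Once we have such a $\mathcal{T}$, the tagged triangulation associated with it in the sense of \cite{FoST} is just $\mathcal{T}$ with all its tags declared plain, because the passage from ideal to tagged triangulations only alters the loop and the radius of each self-folded triangle; so this tagged triangulation consists of plain arcs, each with an endpoint on $\partial S$, as wanted.

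To construct $\mathcal{T}$ I would proceed by ``peeling off'' the punctures to the boundary. If $(S,M)$ has no punctures, every marked point lies on $\partial S$ and any ideal triangulation already does the job, so assume there is a puncture $p$. Using that $S$ is connected and $\partial S\neq\emptyset$, choose a simple arc from $p$ to a marked point of $\partial S$; a regular neighbourhood of this arc together with a short piece of $\partial S$ is a disc whose frontier in $S$ is a single arc $c$ with both endpoints on $\partial S$, and cutting $S$ along $c$ splits it into a once-punctured polygon (containing $p$, all of whose marked points lie on $\partial S$) and a connected marked surface with nonempty boundary and one fewer puncture. Iterating over all the punctures produces a connected puncture-free marked surface together with one once-punctured polygon per puncture. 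I would then triangulate the puncture-free part by an arbitrary ideal triangulation and each once-punctured polygon \emph{without a self-folded triangle} --- joining its puncture to two of its (boundary) marked points, which is possible as soon as the polygon has at least two marked points and makes every arc incident to $\partial S$. Regluing along the arcs $c$, which become interior arcs with both endpoints on $\partial S$, assembles these into an ideal triangulation $\mathcal{T}$ of $(S,M)$; by construction the only arcs of $\mathcal{T}$ incident to a puncture are the chosen puncture-to-boundary arcs, so $\mathcal{T}$ has no self-folded triangle and every arc of $\mathcal{T}$ is incident to $\partial S$.

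The main obstacle is the peeling step. If one simply cuts $S$ along a single puncture-to-boundary arc and re-glues its two sides, the two sides re-glue into a self-folded triangle, which is exactly what must be avoided; this is why one cuts instead along a loop-like arc $c$ enclosing the puncture and insists on triangulating the resulting once-punctured polygon without a self-folded triangle, which in turn forces that polygon to have enough boundary marked points. Verifying that the peeling can always be carried out in this way --- and disposing directly of the handful of small surfaces for which an intermediate piece degenerates, where the standing assumptions on $(S,M)$ are used --- is a routine but not entirely mechanical topological check; everything after it (each reglued arc touches $\partial S$, no self-folded triangle is created, and the invocation of \cite{FoST}) is immediate.
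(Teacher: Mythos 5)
Your construction is, at heart, the same as the paper's: you enclose each puncture in a once-punctured polygon resting on the boundary (your frontier arc $c$ is the paper's arc joining $l$ and $r$ over the puncture), triangulate that polygon by coning the puncture to two boundary marked points, and triangulate the puncture-free remainder arbitrarily.

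The genuine problem is the step you defer. When the only available boundary marked point is a single point $m$ --- the simplest instance being the twice-punctured monogon, a disc with one boundary marked point and two punctures, which is \emph{not} excluded by the standing assumptions on $(S,M)$ --- both endpoints of $c$ are forced to $m$, so $c$ becomes a loop cutting out a once-punctured monogon; it is then not a tagged arc at all, and no once-punctured polygon with two boundary marked points can be cut off. This is not a removable defect of the construction: for the twice-punctured monogon no tagged triangulation of the required form exists. Indeed, such a triangulation would be an ideal triangulation without self-folded triangles whose four arcs all meet $m$. A loop at $m$ would have to enclose at least two punctures on its monogon side and leave at least one puncture on the other side, which is impossible with only two punctures; hence all four arcs join $m$ to a puncture. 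The absence of self-folded triangles then forces each puncture to be an endpoint of exactly two of the four arcs, and the two triangles incident to a given puncture each need a third side running from $m$ to $m$, i.e.\ a loop at $m$ or the unique boundary segment --- but there is no admissible loop and only one boundary segment. So the ``handful of small surfaces'' cannot be disposed of directly: the statement itself fails for them, and the same difficulty is buried in the paper's own proof in the clause ``possibly $l=r$''. For the intended application, Theorem \ref{class}(1), one only needs the weaker condition of Theorem \ref{ec} (no two distinct arcs of $T$ joining the same pair of punctures), which such surfaces do satisfy, e.g.\ via a triangulation containing a single arc between the two punctures.
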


\begin{proof}
 For a puncture $p$ of $(S,M)$, we can construct triangles with $p$ and two marked points $l$ and $r$ (possibly $l=r$) on the boundary of $S$ as follows:
\[
\begin{tikzpicture}[baseline=0mm]
 \coordinate (r) at (15:1.3);
 \coordinate (l) at (165:1.3);
 \coordinate (p) at (0,0.9);
 \draw[semithick] (0:2) arc (60:120:4);
 \draw (l)--(p)--(r);
 \draw (l) .. controls (120:2) and (60:2) .. (r);
 \fill(l) circle (0.7mm); \fill(r) circle (0.7mm); \fill(p) circle (0.7mm);
 \node[above] at(p){$p$}; \node[left=3,above=1] at(l){$l$}; \node[right=3,above=1] at(r){$r$}; \node at(0,0.1){boundary};
\end{tikzpicture}
\]
 Then, for another puncture $q$ of $(S,M)$, it is easy to construct triangles with $q$, $l$ and $r$ in the same way. We have the set of triangles containing all punctures of $(S,M)$ by the inductive construction. There is a tagged triangulation of $(S,M)$ containing these triangles, thus it is what is desired.
\end{proof}

\begin{proof}[Proof of Theorem \ref{class}(1)]
 The assertion follows from Theorem \ref{ec} and Lemma \ref{bpconn}.
\end{proof}

 Next, we consider the case that $S$ is closed. In the rest of this section, let $g$ be the genus of $S$ and $p$ be the number of punctures of $(S,M)$. To prove Theorem \ref{class}(2), we need some preparations.

\begin{lemma}\label{noloops}
 We assume that $S$ is closed and $g>0$. If a tagged triangulation $T$ of $(S,M)$ has loops, then $T$ does not detect tagged arcs.
\end{lemma}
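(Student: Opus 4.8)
The plan is to deduce the statement from the characterization in Theorem \ref{ec}. Since $S$ is closed, every marked point is a puncture, so it suffices to exhibit two tagged arcs $\gamma,\gamma'$ of $T$ connecting a common (possibly coincident) pair of punctures with $\overline{\gamma}\neq\overline{\gamma'}$. We first reduce to the case that $T$ satisfies $(\Diamond)$: changing tags at a puncture $q$ carries $T$ to $T^{(q)}$ and any tagged arc $\de$ to $\de^{(q)}$, and from $\Int(T^{(q)},\de)=\Int(T,\de^{(q)})$ one checks that $T^{(q)}$ detects tagged arcs if and only if $T$ does; since such tag changes fix underlying curves, they also preserve being a loop. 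As $T_3$ has no loop, the resulting $T$ is a gluing of the three puzzle pieces in Figure \ref{pp}, and since under $(\Diamond)$ the arcs of $T$ are plain or $1$-notched, a loop of $T$, having both ends tagged alike, must be plain. Fix such a plain loop $\ell$ at a puncture $p$.

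Next we pick a puzzle piece $P$ one of whose sides (or boundary arcs) is $\ell$ and argue according to the type of $P$. If $P$ is a triangle piece, then two of its three corners lie at $p$ (the two ends of $\ell$) and its remaining two sides $a,b$ join these corners to the third corner $v$ of $P$; we take the pair $(\ell,a)$ when $v=p$ (then $a$ is also a loop at $p$), and the pair $(a,b)$ when $v\neq p$, in which case both $a$ and $b$ connect $p$ and $v$. If $P$ is a $1$-puncture piece or a $2$-puncture piece, then $\ell$ is one of the two boundary arcs of $P$, and the other boundary arc $\ell'$ joins the same two outer corners of $P$, which are both $p$ because $\ell$ joins them, so $\ell'$ is again a loop at $p$; here we take the pair $(\ell,\ell')$. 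In every case the two chosen arcs lie in $T$, are plain, and connect a common pair of punctures, so it only remains to see that they are distinct; then $\overline{\gamma}\neq\overline{\gamma'}$ holds automatically and Theorem \ref{ec} shows that $T$ does not detect tagged arcs.

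The distinctness of the chosen pair is the geometric core of the argument and the step I expect to need the most care. In the triangle-piece case, if the two chosen sides of $P$ coincided as arcs of $T$, then $P$ would be glued to itself along two of its sides, i.e.\ $P$ would be a self-folded triangle; but the loop of a self-folded triangle cuts out a once-punctured monogon, which is forbidden for a tagged arc, so this cannot happen. In the puncture-piece case, $\ell=\ell'$ would mean $P$ is glued to itself along its two boundary arcs; identifying them compatibly with $\ell$ being a loop would produce, after capping off, a non-orientable surface or a sphere with at most three punctures, both excluded by our standing assumptions. The remaining points are routine: that the relevant sides of a triangle piece and the boundary arcs of the puncture pieces are plain is immediate from Figure \ref{pp}, and $\ell$ borders some puzzle piece because the pieces tile $S$ along the arcs of $T$. (In fact only the hypothesis that $S$ is closed is used.)
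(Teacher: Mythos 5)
Your reduction to $(\Diamond)$ and your treatment of the triangle piece and the $1$-puncture piece are sound, and they essentially match the paper's own argument (which also runs through the loop-containing puzzle pieces and locates, in each of them, a pair of arcs violating the condition of Theorem \ref{ec}). The gap is in the $2$-puncture piece case. You assert that a $2$-puncture piece has two boundary arcs joining its two outer corners, but this is false: the $2$-puncture piece is a twice-punctured \emph{monogon}, with a single marked point on its boundary and a single outer edge, namely one loop (see Figure \ref{pp}). So when $\ell$ is the boundary loop of a $2$-puncture piece there is no ``other boundary arc $\ell'$'' to pair with $\ell$; indeed the paper notes that the $2$-puncture piece is precisely the one loop-containing piece that does \emph{not} contain a violating pair, so no argument confined to that piece can succeed. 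As written, your proof establishes nothing in this case.

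To close the gap you must pass to the piece glued to the other side of $\ell$: if it is a triangle or a $1$-puncture piece, your earlier cases apply to it; if it is another $2$-puncture piece, then $T$ consists of exactly two $2$-puncture pieces glued along their boundary loops, so $S$ is a sphere, contradicting $g>0$. This is exactly where the hypothesis $g>0$ enters, and it shows that your closing parenthetical ``only the hypothesis that $S$ is closed is used'' is wrong: gluing two $2$-puncture pieces along their common boundary loop yields a tagged triangulation of the $5$-punctured sphere that has a loop and yet, by Theorem \ref{ec}, does detect tagged arcs.
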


\begin{proof}
 A puzzle piece with loops is one of the followings:
\[\begin{tikzpicture}
 \coordinate (0) at (0,0); 
 \coordinate (1) at (0,1);
 \coordinate (2) at (0,-1);
 \draw (0) circle (1);
 \draw (2) .. controls (0.5,0) and (0.3,0.2) .. (0,0.2);
 \draw (2) .. controls (-0.5,0) and (-0.3,0.2) .. (0,0.2);
 \fill(1) circle (0.7mm); \fill(2) circle (0.7mm);
\end{tikzpicture}
     \hspace{5mm}
\begin{tikzpicture}
 \coordinate (0) at (0,0);
 \coordinate (1) at (-0.5,0.1);
 \coordinate (1') at (0.5,0.1);
 \coordinate (2) at (0,-1);
 \draw (0) circle (1);
 \draw (2) .. controls (-1,-0.2) and (-0.7,0.1) .. (1);
 \draw (2) .. controls (0,-0.2) and (-0.3,0.1) .. (1);
 \draw (2) .. controls (1,-0.2) and (0.7,0.1) .. (1');
 \draw (2) .. controls (0,-0.2) and (0.3,0.1) .. (1');
 \fill(2) circle (0.7mm);
\end{tikzpicture}
 \hspace{5mm}
\begin{tikzpicture}
 \coordinate (0) at (0,0);
 \coordinate (1) at (-0.5,0.1);
 \coordinate (1') at (0.5,0.2); \fill (1') circle (0.7mm);
 \coordinate (2) at (0,-1);
 \draw (0) circle (1);
 \draw (2) .. controls (-1,-0.2) and (-0.7,0.1) .. (1);
 \draw (2) .. controls (0,-0.2) and (-0.3,0.1) .. (1);
 \draw (1') to (2);
 \draw (1') to [out=60,in=120,relative] node[pos=0.2]{\rotatebox{180}{\footnotesize $\bowtie$}} (2);
 \fill(2) circle (0.7mm);
\end{tikzpicture}
     \hspace{5mm}
\begin{tikzpicture}
 \coordinate (0) at (0,0);
 \coordinate (1) at (-0.5,0.2); \fill (1) circle (0.7mm);
 \coordinate (1') at (0.5,0.1);
 \coordinate (2) at (0,-1);
 \draw (0) circle (1);
 \draw (1) to (2);
 \draw (1) to [out=-60,in=-120,relative] node[pos=0.2]{\rotatebox{170}{\footnotesize $\bowtie$}} (2);
 \draw (2) .. controls (1,-0.2) and (0.7,0.1) .. (1');
 \draw (2) .. controls (0,-0.2) and (0.3,0.1) .. (1');
 \fill(2) circle (0.7mm);
\end{tikzpicture}
\hspace{5mm}
\begin{tikzpicture}
 \coordinate (0) at (0,0); 
 \coordinate (1) at (-0.5,-0.8); \fill (1) circle (0.7mm);
 \coordinate (1') at (0.5,-0.8); \fill (1') circle (0.7mm);
 \coordinate (2) at (0,-2); \fill (2) circle (0.7mm);
 \draw (0,-1) circle (1);
 \draw (1) to  (2);
 \draw (1) to [out=-60,in=-120,relative] node[pos=0.2]{\rotatebox{170}{\footnotesize $\bowtie$}} (2);
 \draw (1') to  (2);
 \draw (1') to [out=60,in=120,relative] node[pos=0.2]{\rotatebox{180}{\footnotesize $\bowtie$}}  (2);
\end{tikzpicture}
\]
 In these puzzle pieces, only the $2$-punctured piece does not have a pairs of different plain arcs connecting two (possibly same) common punctures. Therefore, by Theorem \ref{ec}, if a tagged triangulation $T$ with loops of $(S,M)$ detects tagged arcs, then $T$ is obtained by gluing two $2$-punctured pieces and by changing tags if necessary. This is in conflict with $g>0$.
\end{proof}

\begin{lemma}\label{nonotches}
We assume that $S$ is closed and $g>0$. If a tagged triangulation $T$ of $(S,M)$ satisfies $(\Diamond)$ and has 1-notched arcs, then $T$ does not detect tagged arcs.
\end{lemma}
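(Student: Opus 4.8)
The plan is to reuse the puzzle-piece analysis from the proof of Lemma~\ref{noloops}. Since $g>0$, the surface $S$ is not a sphere, so $T\neq T_3$; hence (by the decomposition recalled in Subsection~\ref{puzzle}) $T$ is obtained by gluing a number of triangle pieces, $1$-puncture pieces and $2$-puncture pieces. Reading off Figure~\ref{pp}, the only arcs of these pieces that are notched are the two arcs of the conjugate pair in a $1$-puncture piece and in a $2$-puncture piece; every arc that is glued between two pieces is plain. Thus the hypothesis that $T$ has a $1$-notched arc forces the decomposition of $T$ to contain a $1$-puncture piece or a $2$-puncture piece, and I would treat these two cases separately.

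If the decomposition contains a $2$-puncture piece, its boundary is a single loop, and a loop inside a piece stays a loop after all the gluings; so $T$ has a loop, and Lemma~\ref{noloops} (applicable because $g>0$) gives that $T$ does not detect tagged arcs. Now suppose the decomposition contains a $1$-puncture piece $P$, which is a once-punctured digon whose boundary is a pair of plain arcs $e_1,e_2$ joining the two corners $A,B$ of $P$. As $S$ is closed, $A$ and $B$ are punctures. I would then verify that $e_1$ and $e_2$ are \emph{distinct} plain arcs of $T$ joining the common punctures $A,B$: if $A=B$ in $T$ then $e_1$ is already a loop and we finish by Lemma~\ref{noloops}; and if $A\neq B$ in $T$ while $e_1$ and $e_2$ are identified in $T$, then $S$ is obtained by zipping $P$ up along its boundary, so $S$ is a sphere with the three marked points $A$, $B$ and the puncture of $P$ --- contradicting the standing assumption that $(S,M)$ is not a sphere with at most three punctures. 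Hence $\overline{e_1}=e_1\neq e_2=\overline{e_2}$ and these are two arcs of $T$ connecting two common punctures, so Theorem~\ref{ec} shows that $T$ does not detect tagged arcs.

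The step that needs care is exactly this last one: one must rule out all the ways the two boundary arcs of the $1$-puncture piece can collapse to a single arc of $T$ (each such degeneration produces either a loop, dispatched by Lemma~\ref{noloops}, or a forbidden small sphere) before Theorem~\ref{ec} can be invoked. The rest is a direct inspection of Figure~\ref{pp} together with Lemma~\ref{noloops}.
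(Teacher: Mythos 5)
Your proof is correct and follows essentially the same route as the paper: both arguments reduce to inspecting the puzzle pieces that can contain a conjugate pair (the $1$-puncture and $2$-puncture pieces) and then invoke Theorem \ref{ec} together with Lemma \ref{noloops}. Your version is slightly more streamlined in the $2$-puncture case (citing the boundary loop directly rather than re-running the gluing argument) and more explicit about the degenerate identifications of the digon's corners and edges, which the paper absorbs into its figure of ``Case($\tau_i$)'' configurations.
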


\begin{proof}
A puzzle piece with 1-notched arcs is one of the followings:
\[\begin{tikzpicture}
 \coordinate (0) at (0,0);
 \coordinate (1) at (0,-1);
 \coordinate (2) at (0,-2);
 \draw (2) to [out=180,in=180]  (0);
 \draw (2) to [out=0,in=0] (0);
 \draw (1) to  (2);
 \draw (1) to [out=60,in=120,relative] node[pos=0.2]{\rotatebox{40}{\footnotesize $\bowtie$}} (2);
 \fill(0) circle (0.7mm); \fill (1) circle (0.7mm); \fill (2) circle (0.7mm);
\end{tikzpicture}
 \hspace{5mm}
\begin{tikzpicture}
 \coordinate (0) at (0,0);
 \coordinate (1) at (-0.5,0.1);
 \coordinate (1') at (0.5,0.2); \fill (1') circle (0.7mm);
 \coordinate (2) at (0,-1);
 \draw (0) circle (1);
 \draw (2) .. controls (-1,-0.2) and (-0.7,0.1) .. (1);
 \draw (2) .. controls (0,-0.2) and (-0.3,0.1) .. (1);
 \draw (1') to (2);
 \draw (1') to [out=60,in=120,relative] node[pos=0.2]{\rotatebox{180}{\footnotesize $\bowtie$}} (2);
 \fill(2) circle (0.7mm);
\end{tikzpicture}
     \hspace{5mm}
\begin{tikzpicture}
 \coordinate (0) at (0,0);
 \coordinate (1) at (-0.5,0.2); \fill (1) circle (0.7mm);
 \coordinate (1') at (0.5,0.1);
 \coordinate (2) at (0,-1);
 \draw (0) circle (1);
 \draw (1) to (2);
 \draw (1) to [out=-60,in=-120,relative] node[pos=0.2]{\rotatebox{170}{\footnotesize $\bowtie$}} (2);
 \draw (2) .. controls (1,-0.2) and (0.7,0.1) .. (1');
 \draw (2) .. controls (0,-0.2) and (0.3,0.1) .. (1');
 \fill(2) circle (0.7mm);
\end{tikzpicture}
\hspace{5mm}
\begin{tikzpicture}
 \coordinate (0) at (0,0); 
 \coordinate (1) at (-0.5,-0.8); \fill (1) circle (0.7mm);
 \coordinate (1') at (0.5,-0.8); \fill (1') circle (0.7mm);
 \coordinate (2) at (0,-2); \fill (2) circle (0.7mm);
 \draw (0,-1) circle (1);
 \draw (1) to  (2);
 \draw (1) to [out=-60,in=-120,relative] node[pos=0.2]{\rotatebox{170}{\footnotesize $\bowtie$}} (2);
 \draw (1') to  (2);
 \draw (1') to [out=60,in=120,relative] node[pos=0.2]{\rotatebox{180}{\footnotesize $\bowtie$}}  (2);
\end{tikzpicture}
\]
In these puzzle pieces, only the $2$-punctured piece does not have a pairs of different plain arcs connecting two (possibly same) common punctures. Therefore, the assertion follows in the same way as Lemma \ref{noloops}.
\end{proof}

\begin{theorem}\cite[Theorem 1.1]{JR}\label{JR}
 We assume that $S$ is closed. If $p$ is the minimal integer to satisfy \eqref{ineq}, then there is a tagged triangulation $T$ of $(S,M)$ satisfying the following conditions:
\begin{itemize}
 \item[(T1)] any tagged arc of $T$ is a plain arc;
 \item[(T2)] any triangle of $T$ has three distinct vertices;
 \item[(T3)] the intersection of two distinct triangles of $T$ is either empty, a single vertex, or a single edge.
\end{itemize}
 Conversely, if there is a tagged triangulation of $(S,M)$ satisfying (T1)-(T3), then \eqref{ineq} holds.
\end{theorem}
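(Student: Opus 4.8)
The theorem has two directions, of which the converse is elementary and the existence statement is the substance. I would deal with the converse first. A tagged triangulation satisfying (T1)--(T3) is nothing but a genuine simplicial triangulation of the closed oriented surface $S$ with vertex set $M$; writing $p=|M|$, $E$ for its number of edges and $F$ for its number of triangles, the incidences $3F=2E$ together with $p-E+F=2-2g$ give $E=3p+6g-6$. By (T2)--(T3) the $1$-skeleton is a simple graph on $p$ vertices, so $E\le\binom{p}{2}$, which rearranges to $p^{2}-7p+12-12g\ge 0$, i.e. $p\ge\frac{7+\sqrt{1+48g}}{2}$. For $g\neq 2$ this is exactly \eqref{ineq}. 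For $g=2$ the count only gives $p\ge 9$, so one must additionally exclude $p=9$: such a triangulation would have $E=33$, $F=22$, hence would be a triangulation of the genus-$2$ surface by $K_{9}$ with three edges removed, and I would rule this out by a short analysis of the vertex links (each link is a simple cycle, the three non-edges are severely constrained, and a parity/counting argument on the rotation system yields a contradiction); this is the only delicate point of the converse and is one place where I would lean directly on \cite{JR}.

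For existence, let $p$ be minimal with $p^{2}-7p+12-12g\ge 0$. I would distinguish the \emph{tight} case $3p+6g-6=\binom{p}{2}$ --- equivalently $g=\frac{(p-3)(p-4)}{12}$, which is an integer precisely when $p\equiv 0,3,4,7\pmod{12}$ --- from the rest. In the tight case a triangulation on $p$ vertices of genus $g$ must use every edge of $K_{p}$, so what is required is a \emph{triangular embedding} of the complete graph $K_{p}$ in the genus-$g$ surface; these are built by Ringel's current-graph method (the orientable Heawood case): for each admissible residue mod $12$ one exhibits an index-one current graph over a suitable abelian group whose derived rotation system is that of a triangular embedding of $K_{p}$, and one checks, residue by residue, that the embedding is simplicial --- no two triangles share two edges and no triangle has a repeated vertex. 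When the inequality is strict, $p$ is minimal for some $g$ below the nearest tight value, and I would reach genus $g$ from a small-genus triangulation by repeated \emph{handle addition}: given a simplicial triangulation and two triangles $abc$, $def$ on six distinct vertices with $ad,be,cf$ all non-edges, delete the two open triangles and glue in a triangulated tube; this raises the genus by $1$ and adds exactly the three edges $ad,be,cf$ while preserving simpliciality, and since $\binom{p}{2}-(3p+6g-6)\ge 1$ there is always enough room to iterate until the genus is $g$. Finitely many small sporadic pairs $(g,p)$ --- in particular $g=2$, where one simply writes down an explicit $10$-vertex triangulation of the genus-$2$ surface --- are treated by hand.

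The hard part is the existence direction, and inside it the insistence that the triangulation be \emph{simplicial} rather than merely a triangulation in which triangles may share two edges or repeat a vertex: the current-graph constructions most naturally produce the latter, and forcing (T2)--(T3) requires a careful choice of current group and, in some residue classes, of auxiliary vortices together with explicit local surgeries. The handle-addition step likewise needs the three cross-edges to be simultaneously absent, which must be arranged by choosing the base triangulation judiciously; and the finite list of exceptional small $(g,p)$, including $g=2$, has to be checked individually. The lower bound is routine apart from the $g=2$ exclusion. All of this is precisely the content of \cite[Theorem~1.1]{JR}, so in the present paper I would simply cite it; the above is the blueprint of what that reference carries out.
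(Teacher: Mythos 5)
The paper offers no proof of this statement at all---it is imported verbatim as \cite[Theorem 1.1]{JR}---and your proposal ultimately does the same, so the approaches coincide. Your supplementary sketch (the Euler-characteristic count $E=3p+6g-6\le\binom{p}{2}$ giving \eqref{ineq} apart from the $g=2$ exception, and the current-graph/handle-addition constructions for existence) is an accurate outline of what Jungerman--Ringel actually prove, with the genuinely hard points correctly identified as belonging to that reference.
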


\begin{proposition}\label{JRdetect}
 We assume that $S$ is closed and $g>0$. Then a tagged triangulation $T$ of $(S,M)$ satisfies (T1)-(T3) if and only if $T$ detects tagged arcs.
\end{proposition}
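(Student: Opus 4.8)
The plan is to prove the two implications separately, reading Theorem~\ref{ec} as the statement that ``$T$ detects tagged arcs'' is equivalent to ``no two distinct arcs of $T$ have the same pair of endpoints'' --- valid once (T1) is known, since then each arc of $T$ equals its own underlying plain arc, and since $S$ is closed every marked point is a puncture.

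I would first prove ``(T1)--(T3) $\Rightarrow$ $T$ detects tagged arcs''; this half does not use $g>0$. Argue by contradiction: let $\de\neq\epsilon$ be arcs of $T$ with the same endpoints. Neither is a loop, because a loop at a puncture $p$ would be an edge of a triangle with $p$ as two of its vertices, against (T2); so the common endpoints are distinct punctures $p\neq q$. Choose a triangle $\Delta$ of $T$ with $\de$ as an edge and a triangle $\Delta'$ with $\epsilon$ as an edge, with $\Delta\neq\Delta'$ --- possible since a triangle containing both $\de$ and $\epsilon$ would have two edges joining $p$ and $q$ and hence a repeated vertex, against (T2). Then $\Delta\cap\Delta'$ contains the two distinct points $p,q$, so (T3) forces it to be a single edge, necessarily the one joining $p$ and $q$; but the only edge of $\Delta$ joining $p$ and $q$ is $\de$, and the only such edge of $\Delta'$ is $\epsilon$, so $\de=\epsilon$, a contradiction. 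By Theorem~\ref{ec}, $T$ detects tagged arcs.

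For ``$T$ detects $\Rightarrow$ (T1)--(T3)'', I would first change tags so that $T$ satisfies $(\Diamond)$ (this does not affect whether $T$ detects tagged arcs, by the identity $\Int(\de^{(p)},\epsilon^{(p)})=\Int(\de,\epsilon)$). Because $T$ detects, Lemma~\ref{noloops} gives that $T$ has no loops and Lemma~\ref{nonotches} gives that $T$ has no $1$-notched arcs; with $(\Diamond)$ this yields (T1), and then the underlying ideal triangulation has no loops at all, so every triangle has three distinct vertices --- that is (T2). For (T3): if it failed, two distinct triangles $\Delta_1\neq\Delta_2$ would meet in a subset of $S$ that is larger than a single vertex and is not a single edge; as $\Delta_1\cap\Delta_2$ is a union of common edges and common vertices of $T$, such a subset always contains two distinct common vertices $a\neq b$. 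The edge of $\Delta_1$ joining $a$ and $b$ and the edge of $\Delta_2$ joining $a$ and $b$ are arcs of $T$ with the same endpoints, hence equal; this common edge lies in $\Delta_1\cap\Delta_2$, and since that intersection is strictly larger than an edge it must also contain a common vertex $c\notin\{a,b\}$. Running the same argument on the pairs $\{a,c\}$ and $\{b,c\}$ forces $\Delta_1$ and $\Delta_2$ to have the same three vertices and the same three edges, so $\Delta_1\cup\Delta_2$ is a connected component of $S$ homeomorphic to a sphere with three punctures --- which $(S,M)$ is assumed not to be. Hence (T3) holds.

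The reduction to $(\Diamond)$ and the two applications of Lemmas~\ref{noloops} and \ref{nonotches} are routine, and the first implication is a brief use of (T2)--(T3). The step needing the most care is the verification of (T3) in the second implication: one must argue that any failure of (T3) yields two distinct common vertices, chase the resulting forced equalities of edges through the triangulation, and correctly identify the terminal configuration as a thrice-punctured sphere --- which is precisely the case ruled out by the standing hypothesis that $(S,M)$ is not a sphere with at most three punctures.
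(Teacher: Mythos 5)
Your proof is correct and follows essentially the same route as the paper's: the forward implication rules out two distinct arcs of $T$ with a common pair of puncture endpoints using (T2) and (T3) and then invokes Theorem~\ref{ec}, while the converse gets (T1) and (T2) from Lemmas~\ref{nonotches} and~\ref{noloops} (after the same tag-changing reduction to $(\Diamond)$ that the paper makes implicitly) and derives (T3) from the detection hypothesis together with the exclusion of the thrice-punctured sphere. Your case analysis for (T3) is in fact spelled out a bit more completely than the paper's (which only lists ``two vertices, three vertices, or two edges''), but the underlying argument is identical.
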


\begin{proof}
 We assume that $T$ satisfies (T1)-(T3) and does not detect tagged arcs. By Theorem \ref{ec}, there are tagged arcs $\de$ and $\epsilon$ of $T$ connecting two common punctures such that $\overline{\de} \neq \overline{\epsilon}$. Then they are not contained in a single triangle of $T$ by (T2).  The intersection of a triangle with $\de$ and a triangle with $\epsilon$ has two vertices and does not have an edge connecting them. It conflicts with (T3).

 Conversely, we assume that $T$ detects tagged arcs. By Lemma \ref{nonotches}, we can also assume that $T$ satisfies (T1). By Lemma \ref{noloops}, $T$ satisfies (T2). It is easy to show that if the intersection of two distinct triangles of $T$ is either two vertices, three vertices, or two edges, then there are tagged arcs $\de$ and $\epsilon$ of $T$ connecting two common punctures such that $\overline{\de} \neq \overline{\epsilon}$. Thus it is a contradiction by Theorem \ref{ec}. If the intersection of two distinct triangles of $T$ is three edges, then $(S,M)$ must be a sphere with exactly three punctures, thus it conflicts with our assumption. Therefore, $T$ satisfies (T3).
\end{proof}

\begin{proof}[Proof of Theorem \ref{class}(2)]
 When $g=0$, we have $p \ge 4$ by our assumption, in which case \eqref{ineq} holds. We consider the tagged triangulation
\[
 T=\hspace{3mm}
\begin{tikzpicture}[baseline=0mm,scale=0.7]
 \coordinate (u) at (0,	1);
 \coordinate (d) at (0,-1);
 \coordinate (l) at (-1.2,0);
 \coordinate (r) at (1.2,0);
 \draw (u)--(d) (l)--(u)--(r) (l)--(d)--(r);
 \draw (l) .. controls (-1.2,2) and (1.2,2) .. (r);
 \fill(u) circle (1mm); \fill(l) circle (1mm); \fill(r) circle (1mm); \fill(d) circle (1mm);
\end{tikzpicture}
\]
on the $2$-dimensional sphere $S$. The tagged triangulation $T$ does not have different plain arcs connecting two common punctures. We add a puncture and arcs to a triangle of $T$ as follows:
\[
\begin{tikzpicture}[baseline=5mm,scale=0.7]
 \coordinate (u) at (60:2);
 \coordinate (l) at (0,0);
 \coordinate (r) at (0:2);
 \draw (u)--(l)--(r)--(u);
 \fill(u) circle (1mm); \fill(l) circle (1mm); \fill(r) circle (1mm);
\end{tikzpicture}
 \hspace{3mm}\rightarrow\hspace{3mm}
\begin{tikzpicture}[baseline=5mm,scale=0.7]
 \coordinate (c) at (30:1.15);
 \coordinate (u) at (60:2);
 \coordinate (l) at (0,0);
 \coordinate (r) at (0:2);
 \draw (u)--(l)--(r)--(u) (u)--(c)--(r) (c)--(l);
 \fill(u) circle (1mm); \fill(l) circle (1mm); \fill(r) circle (1mm); \fill(c) circle (1mm);
\end{tikzpicture}
\]
 Then we have inductively a tagged triangulation without different plain arcs connecting two common punctures for any $p$. By Theorem \ref{ec}, it detects tagged arcs.

 We assume that $g>0$. By Theorem \ref{JR} and Proposition \ref{JRdetect}, if there is a tagged triangulation of $(S,M)$ detecting tagged arcs, then \eqref{ineq} holds. Conversely, if $p$ is the minimal integer to satisfy \eqref{ineq}, then there is a tagged triangulation $T$ of $(S,M)$ detecting tagged arcs. In the same way as the case of $g=0$, we have inductively a tagged triangulation without different plain arcs connecting two common punctures for any $p$ satisfying \eqref{ineq}. By Theorem \ref{ec}, it detects tagged arcs.
\end{proof}

\begin{example}
 When $g=1$, \eqref{ineq} means that $p\geq 7$. We consider the tagged triangulation
\[
 T=\ \ 
\begin{tikzpicture}[baseline=-15mm]
\node[circle,draw,inner sep=0.5](1) at (0,0){1}; 
\node[circle,draw,inner sep=0.5](2) at (0,-1){2}; 
\node[circle,draw,inner sep=0.5](3) at (0,-2){3}; 
\node[circle,draw,inner sep=0.5](1') at (0,-3){1}; 
\node[circle,draw,inner sep=0.5](4) at (1,0){4}; 
\node[circle,draw,inner sep=0.5](5) at (2,0){5}; 
\node[circle,draw,inner sep=0.5](1") at (3,0){1}; 
\node[circle,draw,inner sep=0.5](2') at (3,-1){2}; 
\node[circle,draw,inner sep=0.5](3') at (3,-2){3}; 
\node[circle,draw,inner sep=0.5](1''') at (3,-3){1}; 
\node[circle,draw,inner sep=0.5](4') at (1,-3){4}; 
\node[circle,draw,inner sep=0.5](5') at (2,-3){5}; 
\node[circle,draw,inner sep=0.5](6) at (1.5,-1){6}; 
\node[circle,draw,inner sep=0.5](7) at (1.5,-2){7};    
  \draw (1) to (2);
  \draw (1') to (3);
  \draw (1) to (4);
  \draw (1') to (4');
  \draw (1) to (6);
  \draw (2) to (3);
  \draw (2) to (4');
  \draw (2) to (6);
  \draw (2) to (7);
  \draw (3) to (4');
  \draw (4) to (5);
  \draw (4') to (5');
  \draw (4) to (6);
  \draw (4') to (7);
  \draw (5) to (6);
  \draw (5') to (7);
  \draw (6) to (7);
  \draw (1") to (5);
  \draw (2') to (5);
  \draw (3') to (5);
  \draw (3') to (6);
  \draw (3') to (7);
  \draw (1''') to (7);
  \draw (1''') to (5');
  \draw (1") to (2');
  \draw (2') to (3');
  \draw (3') to (1''');
\end{tikzpicture}
\]
on the torus $S$ with $7$ punctures, where we identify each of two vertical lines and two horizontal lines\footnote{This example is also related to coloring problems of closed surfaces. It is the example that proves that we need at least 7 colors to properly cover a graph on the torus.}. Then $T$ does not have different plain arcs connecting two common punctures. Thus $T$ detects tagged arcs by Theorem \ref{ec}.
\end{example}

\section{$f$-vectors in cluster algebras}\label{secfvec}

 In this section, we apply our results to the theory of cluster algebras.

\subsection{Cluster algebras and $f$-vectors}\label{clalg}

 We begin with recalling cluster algebras with coefficients associated with ice quivers \cite{FZ02,FZ07} (see also \cite{K}). For that, we need to prepare some notations. A {\it cluster quiver} is a finite quiver without loops and $2$-cycles. For positive integers $n \le m$, an {\it ice quiver of type $(n,m)$} is a cluster quiver $Q$ with vertices $Q_0=\{1,\ldots,m\}$ such that there are no arrows between vertices in $\{n+1,\ldots,m\}$ which are called {\it frozen vertices}. Let $\cF:={\mathbb Q}(t_1,\ldots,t_m)$ be the field of rational functions in $m$ variables over ${\mathbb Q}$.

\begin{definition}
 $(1)$ A {\it labeled seed} (or simply, {\it seed}) is a pair $(\mathbf{z},Q)$ consisting of the following data:
\begin{itemize}
 \item[(a)] $\mathbf{z}=(z_1,\ldots,z_n,y_1,\ldots,y_{m-n})$ is a free generating set of $\cF$ over $\mathbb{Q}$.
 \item[(b)] $Q$ is an ice quiver of type $(n,m)$.
\end{itemize}
 Then we refer to the $n$-tuple $(z_1,\ldots,z_n)$ as the {\it cluster}, to each $z_i$ as a {\it cluster variable} and $y_i$ as a {\it coefficient}.

 $(2)$ For a seed $(\mathbf{z},Q)$, the {\it mutation $\mu_k(\mathbf{z},Q)=(\mathbf{z'},Q')$ in direction $k$} $(1 \le k \le n)$ is defined as follows:
\begin{itemize}
 \item[(a)] $\mathbf{z'}=(z'_1,\ldots,z'_n,y_1,\ldots,y_{m-n})$ is defined by
 \begin{equation*}
  z_k z'_k = \prod_{(j \rightarrow k) \in Q_1}z_j\prod_{(j \rightarrow k) \in Q_1}y_{j-n}+\prod_{(j \leftarrow k) \in Q_1}z_j\prod_{(j \leftarrow k) \in Q_1}y_{j-n} \ \ \text{and} \ \ z'_i = z_i \ \ \text{if} \ \ i \neq k,
 \end{equation*}
where $z_{n+1}=\cdots=z_{m}=1=y_{1-n}=\cdots=y_0$ and $Q_1$ is the set of arrows in $Q$.
 \item[(b)] $Q'$ is the ice quiver obtained from $Q$ by the following steps:\par
 \begin{itemize}
  \item[(i)] For any path $i \rightarrow k \rightarrow j$, add an arrow $i \rightarrow j$.
  \item[(ii)] Reverse all arrows incident to $k$.
  \item[(iii)] Remove a maximal set of disjoint $2$-cycles.
  \item[(iv)] Remove all arrow connecting two frozen vertices.
 \end{itemize}
\end{itemize}
\end{definition}

We remark that $\mu_k$ is an involution, that is, we have $\mu_k\mu_k(\mathbf{z},Q)=(\mathbf{z},Q)$. Moreover, it is elementary that $\mu_k(\mathbf{z},Q)$ is also a seed.

 Now we define cluster algebras with coefficients associated with ice quivers. For an ice quiver $Q$ of type $(n,m)$, we fix a seed $(\mathbf{x}=(x_1,\ldots,x_n,y_1,\ldots,y_{m-n}),Q)$ which we call the {\it initial seed}.  We also call each $x_i$ an {\it initial cluster variable}.

\begin{definition}
 The {\it cluster algebra} $\cA(\mathbf{x},Q)$ {\it with coefficients} for the initial seed $(\mathbf{x},Q)$ is a $\mathbb{Z}$-subalgebra of ${\mathcal F}$ generated by the cluster variables and the coefficients obtained by all sequences of mutations from $(\mathbf{x},Q)$.
\end{definition}

 Next, we recall the definition of cluster algebras with principal coefficients \cite{FZ07}. Let $Q$ be an ice quiver of type $(n,n)$ with vertices $Q_0=\{1,\ldots,n\}$. The {\it framed quiver} associated with $Q$ is the ice quiver $\hat{Q}$ of type $(n,2n)$ which is obtained from $Q$ by adding frozen vertices $\{1',\ldots,n'\}$ and arrows $\{i \rightarrow i' \mid i \in Q_0\}$. Then $\cA(Q):=\cA((x_1,\ldots,x_n,y_1,\ldots,y_n),\hat{Q})$ is called a {\it cluster algebra with principal coefficients}.

 One of the remarkable properties of cluster algebras is the {\it strongly Laurent phenomenon} \cite[Proposition 3.6]{FZ07}: Every element of the cluster algebra $\cA(Q)$ with principal coefficients is a Laurent polynomial over $\bZ[y_1,\ldots,y_n]$ in initial cluster variables, that is, $\cA(Q) \subseteq \bZ[x_1^{\pm 1},\ldots,x_n^{\pm 1},y_1,\ldots,y_n]$. Then we denote the Laurent expression of a cluster variable $z$ of $\cA(Q)$ by $z(x_1,\ldots,x_n,y_1,\ldots,y_n)$. The {\it $F$-polynomial} of $z$ is the rational function $z(1,\ldots,1,y_1,\ldots,y_n)$, which is a polynomial by the strongly Laurent phenomenon. Let $f_{z,1},\ldots,f_{z,n}$ be the maximal degrees of $y_1,\ldots,y_n$ in $z(1,\ldots,1,y_1,\ldots,y_n)$, respectively. The {\it $f$-vector} of $z$ is the integer vector $f_z:=(f_{z,1},\ldots,f_{z,n}) \in \bZ_{\ge 0}^n$. For a cluster $\mathbf{z}=(z_1,\ldots,z_n)$ of $\cA(Q)$, the {\it $F$-matrix} of $\mathbf{z}$ is defined by the non-negative integer $n \times n$-matrix $F_\mathbf{z}$ with columns $f_{z_1},\ldots,f_{z_n}$ \cite[Definition 2.6]{FuG}.

\begin{example}\label{example1}
 Let $Q$ be a quiver $1 \leftarrow 2 \leftarrow 3$ of type $A_3$. We can compute the mutation of the initial seed $((x_1,x_2,x_3,y_1,y_2,y_3),\hat{Q})$ of $\cA(Q)$ in direction $1$ as follows:
\[
\mu_1\Biggl((x_1,x_2,x_3,y_1,y_2,y_3),\begin{tikzpicture}[baseline=3mm]
 \node (1) at(-1,0) {$1$}; \node (2) at(0,0) {$2$}; \node (3) at(1,0) {$3$};
 \node (1') at(-1,1) {$1'$}; \node (2') at(0,1) {$2'$}; \node (3') at(1,1) {$3'$};
 \draw[<-] (1)--(2); \draw[<-] (2)--(3); \draw[->] (1)--(1'); \draw[->] (2)--(2'); \draw[->] (3)--(3');
\end{tikzpicture}\Biggr) = \Biggl(\biggl(\cfrac{y_1+x_2}{x_1},x_2,x_3,y_1,y_2,y_3\biggr),\begin{tikzpicture}[baseline=3mm]
 \node (1) at(-1,0) {$1$}; \node (2) at(0,0) {$2$}; \node (3) at(1,0) {$3$};
 \node (1') at(-1,1) {$1'$}; \node (2') at(0,1) {$2'$}; \node (3') at(1,1) {$3'$};
 \draw[->] (1)--(2); \draw[<-] (2)--(3); \draw[<-] (1)--(1'); \draw[->] (2)--(2'); \draw[->] (3)--(3'); \draw[->] (2)--(1');
\end{tikzpicture}\Biggr)
\]
 Repeating mutations, we get all the cluster variables as in Table \ref{examp}. Therefore, the cluster algebra is
{\setlength\arraycolsep{0mm}
\begin{eqnarray*}
 \cA(Q) =\bZ\biggl[&x_1,x_2,x_3,\cfrac{y_1+x_2}{x_1},\cfrac{y_2x_1+x_3}{x_2},\cfrac{1+y_3x_2}{x_3},\cfrac{y_1y_2x_1+y_1x_3+x_2x_3}{x_1x_2},\\
&\cfrac{y_2x_1+x_3+y_2y_3x_1x_2}{x_2x_3},\cfrac{y_1y_2x_1+y_1x_3+y_1y_2y_3x_1x_2+x_2x_3}{x_1x_2x_3}\biggr].
\end{eqnarray*}}
 The $F$ -polynomial of a cluster variable
\[
 z = \cfrac{y_1y_2x_1+y_1x_3+y_1y_2y_3x_1x_2+x_2x_3}{x_1x_2x_3}
\]
is $y_1y_2+y_1+y_1y_2y_3+1$, thus we have $f_z = (1,1,1)$. All $f$-vectors appear in Table \ref{examp}.
\end{example}

\begin{table}[ht]
\begin{tabular}{c|c|c}
 Cluster variable $z$ & $f_z=(f_{z,1},f_{z,2},f_{z,3})$ & Tagged arcs $\de$ such that $z_{\de}=z$
\\\hline
\end{tabular}
\vspace{2mm}\\
\begin{minipage}{0.3\hsize}
\begin{center}\begin{tabular}{c|c|c}
 $x_1$ & $(0,0,0)$ &
\begin{tikzpicture}[baseline=-1mm,scale=0.5]
 \coordinate (u) at(90:1); \coordinate (lu) at(150:1); \coordinate (ld) at(-150:1);
 \coordinate (ru) at(30:1); \coordinate (rd) at(-30:1); \coordinate (d) at(-90:1);
 \draw (u)--(lu)--(ld)--(d)--(rd)--(ru)--(u); \node at(0,1.3) {}; \node at(0,-1.3) {}; \draw[blue,thick] (u)--(ld);
\end{tikzpicture}
\\\hline
 $x_2$ & $(0,0,0)$ &
\begin{tikzpicture}[baseline=-1mm,scale=0.5]
 \coordinate (u) at(90:1); \coordinate (lu) at(150:1); \coordinate (ld) at(-150:1);
 \coordinate (ru) at(30:1); \coordinate (rd) at(-30:1); \coordinate (d) at(-90:1);
 \draw (u)--(lu)--(ld)--(d)--(rd)--(ru)--(u); \node at(0,1.3) {}; \node at(0,-1.3) {}; \draw[blue,thick] (u)--(d);
\end{tikzpicture}
\\\hline
 $x_3$ & $(0,0,0)$ &
\begin{tikzpicture}[baseline=-1mm,scale=0.5]
 \coordinate (u) at(90:1); \coordinate (lu) at(150:1); \coordinate (ld) at(-150:1);
 \coordinate (ru) at(30:1); \coordinate (rd) at(-30:1); \coordinate (d) at(-90:1);
 \draw (u)--(lu)--(ld)--(d)--(rd)--(ru)--(u); \node at(0,1.3) {}; \node at(0,-1.3) {}; \draw[blue,thick] (u)--(rd);
\end{tikzpicture}
\end{tabular}\end{center}
\end{minipage}
\begin{minipage}{0.3\hsize}
\begin{center}\begin{tabular}{c|c|c}
 $\cfrac{y_1+x_2}{x_1}$ & $(1,0,0)$ &
\begin{tikzpicture}[baseline=-1mm,scale=0.5]
 \coordinate (u) at(90:1); \coordinate (lu) at(150:1); \coordinate (ld) at(-150:1);
 \coordinate (ru) at(30:1); \coordinate (rd) at(-30:1); \coordinate (d) at(-90:1);
 \draw (u)--(lu)--(ld)--(d)--(rd)--(ru)--(u); \node at(0,1.3) {}; \node at(0,-1.3) {}; \draw[blue,thick] (lu)--(d);
\end{tikzpicture}
\\\hline
 $\cfrac{y_2x_1+x_3}{x_2}$ & $(0,1,0)$ &
\begin{tikzpicture}[baseline=-1mm,scale=0.5]
 \coordinate (u) at(90:1); \coordinate (lu) at(150:1); \coordinate (ld) at(-150:1);
 \coordinate (ru) at(30:1); \coordinate (rd) at(-30:1); \coordinate (d) at(-90:1);
 \draw (u)--(lu)--(ld)--(d)--(rd)--(ru)--(u); \node at(0,1.3) {}; \node at(0,-1.3) {}; \draw[blue,thick] (ld)--(rd);
\end{tikzpicture}
\\\hline
 $\cfrac{1+y_3x_2}{x_3}$ & $(0,0,1)$ &
\begin{tikzpicture}[baseline=-1mm,scale=0.5]
 \coordinate (u) at(90:1); \coordinate (lu) at(150:1); \coordinate (ld) at(-150:1);
 \coordinate (ru) at(30:1); \coordinate (rd) at(-30:1); \coordinate (d) at(-90:1);
 \draw (u)--(lu)--(ld)--(d)--(rd)--(ru)--(u); \node at(0,1.3) {}; \node at(0,-1.3) {}; \draw[blue,thick] (ru)--(d);
\end{tikzpicture}
\end{tabular}\end{center}
\end{minipage}
\begin{tabular}{c|c|c}
\hline
 $\cfrac{y_1y_2x_1+y_1x_3+x_2x_3}{x_1x_2}$ & $(1,1,0)$ &
\begin{tikzpicture}[baseline=-1mm,scale=0.5]
 \coordinate (u) at(90:1); \coordinate (lu) at(150:1); \coordinate (ld) at(-150:1);
 \coordinate (ru) at(30:1); \coordinate (rd) at(-30:1); \coordinate (d) at(-90:1);
 \draw (u)--(lu)--(ld)--(d)--(rd)--(ru)--(u); \node at(0,1.3) {}; \node at(0,-1.3) {}; \draw[blue,thick] (lu)--(rd);
\end{tikzpicture}
\\\hline
 $\cfrac{y_2x_1+x_3+y_2y_3x_1x_2}{x_2x_3}$ & $(0,1,1)$ &
\begin{tikzpicture}[baseline=-1mm,scale=0.5]
 \coordinate (u) at(90:1); \coordinate (lu) at(150:1); \coordinate (ld) at(-150:1);
 \coordinate (ru) at(30:1); \coordinate (rd) at(-30:1); \coordinate (d) at(-90:1);
 \draw (u)--(lu)--(ld)--(d)--(rd)--(ru)--(u); \node at(0,1.3) {}; \node at(0,-1.3) {}; \draw[blue,thick] (ru)--(ld);
\end{tikzpicture}
\\\hline
 $\cfrac{y_1y_2x_1+y_1x_3+y_1y_2y_3x_1x_2+x_2x_3}{x_1x_2x_3}$ & $(1,1,1)$ &
\begin{tikzpicture}[baseline=-1mm,scale=0.5]
 \coordinate (u) at(90:1); \coordinate (lu) at(150:1); \coordinate (ld) at(-150:1);
 \coordinate (ru) at(30:1); \coordinate (rd) at(-30:1); \coordinate (d) at(-90:1);
 \draw (u)--(lu)--(ld)--(d)--(rd)--(ru)--(u); \node at(0,1.3) {}; \node at(0,-1.3) {}; \draw[blue,thick] (lu)--(ru);
\end{tikzpicture}
\\\hline
  \end{tabular}
 \vspace{5mm}
 \caption{In $\cA(Q)$ for a quiver $Q$ of type $A_3$, all the $9$ cluster variables, the corresponding $f$-vectors and tagged arcs}\label{examp}
\end{table}

 In Table \ref{examp}, different non-initial cluster variables have different $f$-vectors. In general, it is not true (see Proposition \ref{ec2}). However, we conjecture that different clusters have different $F$-matrices.

\begin{conjecture}\label{conjF}
 Let ${\cA}$ be an arbitrary cluster algebra with principal coefficients. If clusters $\mathbf{z}$ and $\mathbf{z'}$ in $\cA$ have $F_{\mathbf{z}}=F_{\mathbf{z'}}$, then $\mathbf{z}=\mathbf{z'}$.
\end{conjecture}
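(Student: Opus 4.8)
The plan is to imitate the argument behind Corollary~\ref{Funique}, replacing tagged triangulations by an additive categorification. After fixing a nondegenerate potential on the initial quiver, one passes to an associated (generalized) cluster category: reachable cluster variables of $\cA$ then correspond to the indecomposable rigid objects reachable from the initial cluster-tilting object $\Gamma$, reachable clusters correspond to basic cluster-tilting objects, and the $f$-vector of a cluster variable $z$ is the dimension vector of the module $M_z$ whose quiver-Grassmannian Euler characteristics are the coefficients of the $F$-polynomial (Derksen--Weyman--Zelevinsky, Plamondon); here $M_z=0$ precisely when the corresponding summand is a shifted projective $\Gamma_i[1]$, i.e.\ $z=x_i$. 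In these terms Conjecture~\ref{conjF} asks that a reachable basic cluster-tilting object be determined, among reachable ones, by the multiset of dimension vectors of the modules attached to its indecomposable summands; Corollary~\ref{Funique} is exactly this for surface categories and would serve as the base case.

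First I would isolate the vanishing columns of $F_{\mathbf z}$: since cluster monomials are linearly independent, a cluster variable that is a Laurent monomial in $\mathbf x$ must itself be an initial variable, so $f_z=0$ if and only if $z\in\{x_1,\dots,x_n\}$; thus the zero columns of $F_{\mathbf z}$ single out a subset of the initial cluster, though at first only its size is known, not which $x_i$ occur --- the analogue of the $1$-notched$/T_3$ ambiguity resolved at the end of the proof of Theorem~\ref{main}. For the nonzero columns I would try to exploit the already-established rigidity of $d$-matrices \cite[Theorem 4.22 (i)]{cl16} (equivalently of $g$- and $c$-matrices \cite[Corollary 4.5, Theorem 4.8]{Nak}): the cleanest route would be, for each fixed cluster, to recover its $d$-matrix (or $c$-matrix) from its $f$-matrix, using the linear relations among the $f$-, $d$-, $g$- and $c$-matrices of a seed established in \cite{FuG}. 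Once the $d$-matrix --- hence the cluster $\mathbf z$, and then the list of $x_i$ it contains --- has been recovered, the residual ambiguity is killed exactly as in the proof of Theorem~\ref{main}, by maximality of clusters.

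The main obstacle is that the relations of \cite{FuG} also involve the exchange matrix of the \emph{target} cluster $\mathbf z$, which is unknown until $\mathbf z$ itself is identified, so the intended reduction $F_{\mathbf z}\rightsquigarrow D_{\mathbf z}$ is in danger of being circular; breaking it seems to require new input, for instance a direct proof that the $c$-matrix of $\mathbf z$ is determined by $F_{\mathbf z}$ through sign-coherence together with positivity of $f$-vectors. Failing that, the fallback is a case analysis: reduce to finite mutation type by freezing and deleting vertices (which acts predictably on $F$-polynomials), and then settle the remaining exceptional types by hand, imitating the ``modification'' bookkeeping of Section~\ref{modif}. Either way the deeper difficulty is the lack of a uniform combinatorial model outside the surface case --- the modified-tagged-arc device that makes Theorem~\ref{intinj} work has no known counterpart for a general cluster algebra --- so the proof of Corollary~\ref{Funique} does not transfer verbatim, and the statement is expected to remain a conjecture.
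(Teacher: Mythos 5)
This statement is presented in the paper as Conjecture~\ref{conjF}; the paper does \emph{not} prove it in general. The only case it establishes is Corollary~\ref{Funique}, for cluster algebras arising from tagged triangulations, and that proof goes through Theorem~\ref{f=Int} (identifying $f$-vectors with intersection vectors) and Theorem~\ref{main} (tagged triangulations are determined by their intersection matrices), with all the real work done by the modified-tagged-arc analysis of Theorem~\ref{intinj}. Your proposal correctly recognizes this status and, to its credit, does not claim to close the general case. So there is no ``paper's proof'' to compare against: what you have written is a research plan, and it ends, as the paper does, with the statement still open.

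On the substance of the plan: the obstruction you isolate is the right one. The linear relations of \cite{FuG} among $F$-, $D$-, $G$- and $C$-matrices involve data of the \emph{target} seed (its exchange matrix), so the intended reduction $F_{\mathbf z}\rightsquigarrow D_{\mathbf z}$ or $C_{\mathbf z}$ is circular without new input, and neither the categorification framing nor sign-coherence alone supplies it. Your first step --- that a column of $F_{\mathbf z}$ vanishes if and only if the corresponding cluster variable is initial --- is correct and is the honest analogue of the $\de\notin A$ bookkeeping in the proof of Theorem~\ref{main}. However, the proposed fallback of ``reducing to finite mutation type by freezing and deleting vertices'' does not work as stated: freezing and deleting only passes to sub-seeds, and an arbitrary cluster algebra is not controlled by its finite-mutation-type restrictions, so this would not yield a case analysis covering the general conjecture. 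In short, the proposal is a reasonable map of the terrain, it correctly locates the gap, and it correctly concludes that the statement remains a conjecture --- which is exactly how the paper leaves it.
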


\begin{remark}
 In this paper, for simplicity, we define $f$-vectors only for cluster algebras with principal coefficients. In fact, we can also define $f$-vectors of cluster variables for cluster algebras with any coefficients by recurrences of \cite[(2.18),(2.19)]{FuG} (it is well-defined by \cite[Theorem 4.6]{CKLP}). However, when we consider the general coefficients version of Conjecture \ref{conjF}, it suffices to show this conjecture only in the principal coefficients case because $f$-vectors defined by \cite[(2.18),(2.19)]{FuG} depends only on quiver $Q$, and not on coefficients.  
\end{remark}
 In the next subsection, we prove Conjecture \ref{conjF} for the cluster algebra with principal coefficients defined from each tagged triangulation of $(S,M)$.

\subsection{Applying for cluster algebras defined from tagged triangulations}\label{appli}

 For a tagged triangulation $T$ satisfying $(\Diamond)$, we construct a cluster quiver $Q_T$ whose vertices are arcs of $T$ and whose arrows are obtained as in Figure \ref{QT} for puzzle pieces of $T$ or $T=T_3$. For any puncture $p$ of $(S,M)$, we define $Q_{T^{(p)}}=Q_T$. Thus we have the associated cluster quiver $Q_T$ for any tagged  triangulation $T$.
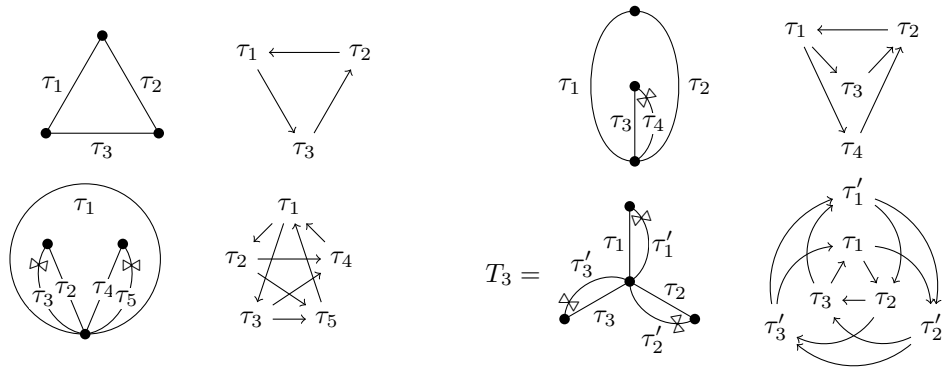
\begin{figure}[ht]
\begin{tikzpicture}
 \coordinate (0) at (0,0);
 \coordinate (1) at (120:1.5);
 \coordinate (2) at (180:1.5);
 \draw (0) to node[right]{$\tau_2$} (1);
 \draw (1) to node[left]{$\tau_1$} (2);
 \draw (0) to node[below]{$\tau_3$} (2);
 \fill(0) circle (0.7mm); \fill (1) circle (0.7mm); \fill (2) circle (0.7mm);
\end{tikzpicture}
   \hspace{5mm}
\begin{tikzpicture}
 \node (3) at (0,0) {$\tau_3$};
 \node (2) at (60:1.5) {$\tau_2$};
 \node (1) at (120:1.5) {$\tau_1$};
 \draw[->] (3) -- (2); \draw[->] (2) -- (1); \draw[->] (1) -- (3);
\end{tikzpicture}
   \hspace{20mm}
\begin{tikzpicture}
 \coordinate (0) at (0,0);
 \coordinate (1) at (0,-1);
 \coordinate (2) at (0,-2);
 \draw (2) to [out=180,in=180] node[left]{$\tau_1$} (0);
 \draw (2) to [out=0,in=0] node[right]{$\tau_2$} (0);
 \draw (1) to node[left=-3]{$\tau_3$} (2);
 \draw (1) to [out=60,in=120,relative] node[pos=0.2]{\rotatebox{40}{\footnotesize $\bowtie$}} node[fill=white,inner sep=1]{$\tau_4$} (2);
 \fill(0) circle (0.7mm); \fill (1) circle (0.7mm); \fill (2) circle (0.7mm);
\end{tikzpicture}
   \hspace{5mm}
\begin{tikzpicture}
 \node (3) at (0,0.5) {$\tau_3$};
 \node (4) at (0,-0.3) {$\tau_4$};
 \node (2) at (60:1.5) {$\tau_2$};
 \node (1) at (120:1.5) {$\tau_1$};
 \draw[->] (3) -- (2); \draw[->] (2) -- (1); \draw[->] (1) -- (3);
 \draw[->] (4) -- (2); \draw[->] (1) -- (4);
\end{tikzpicture}
\\
\begin{tikzpicture}[baseline=-13mm]
 \coordinate (0) at (0,0); \node at (0,-0.3) {$\tau_1$};
 \coordinate (1) at (-0.5,-0.8); \fill (1) circle (0.7mm);
 \coordinate (1') at (0.5,-0.8); \fill (1') circle (0.7mm);
 \coordinate (2) at (0,-2); \fill (2) circle (0.7mm);
 \draw (0,-1) circle (1);
 \draw (1) to node[fill=white,inner sep=1]{$\tau_2$} (2);
 \draw (1) to [out=-60,in=-120,relative] node[pos=0.2]{\rotatebox{170}{\footnotesize $\bowtie$}} node[fill=white,inner sep=1]{$\tau_3$} (2);
 \draw (1') to node[fill=white,inner sep=1]{$\tau_4$} (2);
 \draw (1') to [out=60,in=120,relative] node[pos=0.2]{\rotatebox{180}{\footnotesize $\bowtie$}} node[fill=white,inner sep=1]{$\tau_5$} (2);
\end{tikzpicture}
   \hspace{5mm}
\begin{tikzpicture}[baseline=-10mm]
 \node (1) at (0,0) {$\tau_1$};
 \node (2) at (-0.7,-0.7) {$\tau_2$};
 \node (3) at (-0.5,-1.5) {$\tau_3$};
 \node (4) at (0.7,-0.7) {$\tau_4$};
 \node (5) at (0.5,-1.5) {$\tau_5$};
 \draw[->] (1) -- (2); \draw[->] (2) -- (4); \draw[->] (4) -- (1);
 \draw[->] (1) -- (3); \draw[->] (3) -- (4);
 \draw[->] (2) -- (5); \draw[->] (5) -- (1); \draw[->] (3) -- (5);
\end{tikzpicture}
   \hspace{14mm}
$T_3=$
\begin{tikzpicture}[baseline=0mm]
 \coordinate (0) at (0,0);
 \coordinate (u) at (90:1);
 \coordinate (r) at (-30:1);
 \coordinate (l) at (210:1);
 \draw (0) to node[left,fill=white,inner sep=1]{$\tau_1$} (u);
 \draw (0) to [out=-60,in=-120,relative] node[pos=0.8]{\rotatebox{20}{\footnotesize $\bowtie$}} node[right,fill=white,inner sep=1]{$\tau_1'$} (u);
 \draw (0) to node[right=5,above=-3]{$\tau_2$} (r);
 \draw (0) to [out=-60,in=-120,relative] node[pos=0.8]{\rotatebox{100}{\footnotesize $\bowtie$}} node[below]{$\tau_2'$} (r);
 \draw (0) to node[below=7,right=-5]{$\tau_3$} (l);
 \draw (0) to [out=-60,in=-120,relative] node[pos=0.8]{\rotatebox{-30}{\footnotesize $\bowtie$}} node[left=2,above=-1]{$\tau_3'$} (l);
 \fill (0) circle (0.7mm); \fill (u) circle (0.7mm); \fill (l) circle (0.7mm); \fill (r) circle (0.7mm);
\end{tikzpicture}
   \hspace{5mm}
\begin{tikzpicture}[baseline=0mm]
 \node (1) at (90:0.5) {$\tau_1$};
 \node (2) at (90:1.2) {$\tau_1'$};
 \node (3) at (-30:0.5) {$\tau_2$};
 \node (4) at (-30:1.2) {$\tau_2'$};
 \node (5) at (210:0.5) {$\tau_3$};
 \node (6) at (210:1.2) {$\tau_3'$};
 \draw[->] (1) -- (3); \draw[->] (3) -- (5); \draw[->] (5) -- (1);
 \draw[->] (2) to [out=40,in=140,relative] (4); \draw[->] (4) to [out=40,in=140,relative] (6); \draw[->] (6) to [out=40,in=140,relative] (2);
 \draw[->] (2) to [out=40,in=140,relative] (3); \draw[->] (4) to [out=40,in=140,relative] (5); \draw[->] (6) to [out=40,in=140,relative] (1);
 \draw[->] (1) to [out=40,in=140,relative] (4); \draw[->] (3) to [out=40,in=140,relative] (6); \draw[->] (5) to [out=40,in=140,relative] (2);
\end{tikzpicture}
\caption{Quivers corresponding to puzzle pieces and $T_3$}\label{QT}
\end{figure}
 Then we have a cluster algebra $\cA(T):=\cA(Q_T)$ for any tagged triangulation $T$ of $(S,M)$. This cluster algebra has the following properties.

\begin{theorem}\cite[Theorem 7.11]{FoST}\cite[Theorem 6.1]{FoT}\label{bijtc}
 Let $T$ be a tagged triangulation of $(S,M)$.
\begin{itemize}
\item[(1)]
 If $(S,M)$ is not $1$-punctured closed surface, the tagged arcs $\de$ of $(S,M)$ correspond bijectively with the cluster variables $z_{\de}$ in $\cA(T)$. This induces that the tagged triangulations $T'$ of $(S,M)$ correspond bijectively with the clusters $\mathbf{z}_{T'}$ in $\cA(T)$.
 \item[(2)]
 If $(S,M)$ is $1$-punctured closed surface, the plain arcs $\de$ of $(S,M)$ correspond bijectively with the cluster variables $z_{\de}$ in $\cA(T)$. This induces that the tagged triangulations $T'$ which consist of plain arcs $\de$ of $(S,M)$ correspond bijectively with the clusters $\mathbf{z}_{T'}$ in $\cA(T)$.
 \end{itemize} 
\end{theorem}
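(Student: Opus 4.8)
The plan is to identify the combinatorial flip structure on tagged triangulations with the mutation structure on seeds of $\cA(T)$, and then to separate the two sides using hyperbolic geometry. First I would establish the local dictionary \emph{flip $=$ mutation}: if $T'$ satisfies $(\Diamond)$ and $\de_i\in T'$ is replaced by the unique tagged arc $\de_i'$ for which $(T'\setminus\{\de_i\})\cup\{\de_i'\}$ is again a tagged triangulation, then the quiver of the flipped triangulation equals $\mu_i(Q_{T'})$, with $Q_{T'}$ built from the puzzle pieces of Figure~\ref{QT}. This is a finite verification carried out over each puzzle piece, over the ways two pieces are glued along an edge, and over the exceptional configuration $T_3$; the signed adjacency of the arcs incident to $\de_i$ is read off locally, so the check is purely combinatorial.

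Next I would pass from the local statement to a global surjection. Under the standing assumptions on $(S,M)$ the flip graph of tagged triangulations is connected, and the chosen triangulation $T$ is sent to the initial seed of $\cA(T)$. Since every seed of a cluster algebra is reached from the initial one by finitely many mutations, and each mutation in direction $i$ is realized by the corresponding flip, the assignment $T'\mapsto \mathbf{z}_{T'}$ is well defined and hits every cluster; tracking the single arc exchanged at each flip gives simultaneously a surjection $\de\mapsto z_\de$ from tagged arcs onto cluster variables, under which the Ptolemy-type relation for the exchanged pair becomes the exchange relation for $z_{\de_i}z_{\de_i'}$.

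The main obstacle is injectivity of $\de\mapsto z_\de$, and here I would leave the purely combinatorial setting for Penner's decorated Teichm\"uller theory. To each tagged arc one attaches its tagged $\lambda$-length, a regular function on the decorated Teichm\"uller space of $(S,M)$; the generalized Ptolemy relations show that these $\lambda$-lengths obey exactly the exchange relations of $\cA(T)$, producing an algebra homomorphism from $\cA(T)$ into functions on Teichm\"uller space with $z_\de\mapsto \lambda_\de$. Since the $\lambda$-lengths of the arcs of any single tagged triangulation form a global coordinate system, and geometrically distinct geodesics are separated by their length functions, distinct tagged arcs have distinct $\lambda$-lengths, whence distinct $z_\de$. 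An injective map on arcs induces an injective map on the maximal compatible sets, so $T'\mapsto\mathbf{z}_{T'}$ is injective as well; together with surjectivity this proves the bijections in part~(1).

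Finally I would isolate the once-punctured closed surface. The operation changing all tags at the single puncture $p$ sends each tagged triangulation to a distinct one but preserves every quiver, as recorded by $Q_{T^{(p)}}=Q_T$; consequently the all-plain and all-notched triangulations map to the same seed, and the $\lambda$-length functions separate arcs only up to this global tag change. This is precisely the failure of tagged flip-connectivity in this case, and it forces a two-to-one degeneracy. Restricting to plain arcs chooses one representative from each orbit and removes the ambiguity, yielding the bijections asserted in part~(2), while for all remaining surfaces the tag-changing involution fixes no triangulation and part~(1) holds verbatim.
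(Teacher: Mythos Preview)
This theorem is not proved in the present paper; it is quoted verbatim from \cite[Theorem~7.11]{FoST} and \cite[Theorem~6.1]{FoT}, so there is no proof here to compare against. Your outline is a faithful summary of the strategy actually carried out in those references: the local verification that tagged flips induce quiver mutations is \cite[Lemma~9.7 and Proposition~12.4]{FoST}; connectivity of the tagged flip graph (away from the once-punctured closed case) is \cite[Proposition~7.10]{FoST}; and the separation of cluster variables via $\lambda$-lengths on decorated Teichm\"uller space is the content of \cite[Theorem~6.1]{FoT}.

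Two points deserve tightening. First, ``geometrically distinct geodesics are separated by their length functions'' glosses over the fact that a tagged arc is not just a geodesic but a geodesic together with a choice of horocycle or conjugate horocycle at each punctured end; the relevant $\lambda$-length depends on these choices, and one must check that a plain and a notched end at the same puncture produce genuinely different functions. This is done in \cite{FoT} by exhibiting the conjugate horocycle explicitly. Second, your final paragraph slightly overstates the failure in the once-punctured closed case: it is not that $\lambda$-lengths only separate arcs up to the global tag change, but rather that the tagged flip graph itself has two connected components, so the all-notched triangulations are never reached by mutation from $T$ and hence never correspond to seeds of $\cA(T)$ at all. The restriction to plain arcs in part~(2) selects the component containing the initial seed, not a fundamental domain for an involution acting on seeds.
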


\begin{example}\label{example2}
 For a marked surface $(S,M)$ and a tagged triangulation $T$ of $(S,M)$ as follows:
\[
 (S,M) = 
\begin{tikzpicture}[baseline=-1mm,scale=0.5]
 \coordinate (u) at(90:1); \coordinate (lu) at(150:1); \coordinate (ld) at(-150:1);
 \coordinate (ru) at(30:1); \coordinate (rd) at(-30:1); \coordinate (d) at(-90:1);
 \draw (u)--(lu)--(ld)--(d)--(rd)--(ru)--(u);
 \fill (u) circle (1.4mm); \fill (lu) circle (1.4mm); \fill (ld) circle (1.4mm); \fill (ru) circle (1.4mm); \fill (rd) circle (1.4mm); \fill (d) circle (1.4mm);
\end{tikzpicture}\ ,\hspace{3mm}
 T = 
\begin{tikzpicture}[baseline=-1mm,scale=0.5]
 \coordinate (u) at(90:1); \coordinate (lu) at(150:1); \coordinate (ld) at(-150:1);
 \coordinate (ru) at(30:1); \coordinate (rd) at(-30:1); \coordinate (d) at(-90:1);
 \draw (u)--(lu)--(ld)--(d)--(rd)--(ru)--(u); \draw (ld)--(u)--(d) (u)--(rd);
\end{tikzpicture}\ ,
\]
 $Q_T$ is a quiver $1 \leftarrow 2 \leftarrow 3$ of type $A_3$. The bijection between the set of tagged arcs of $(S,M)$ and the set of cluster variables in $\cA(T)$ is given in Table \ref{examp}. Figure \ref{hexagoncomplex} gives the tagged arc complex of $(S,M)$. In this case, three tagged arcs whose each pair is combined by an edge form a tagged triangulation.
 
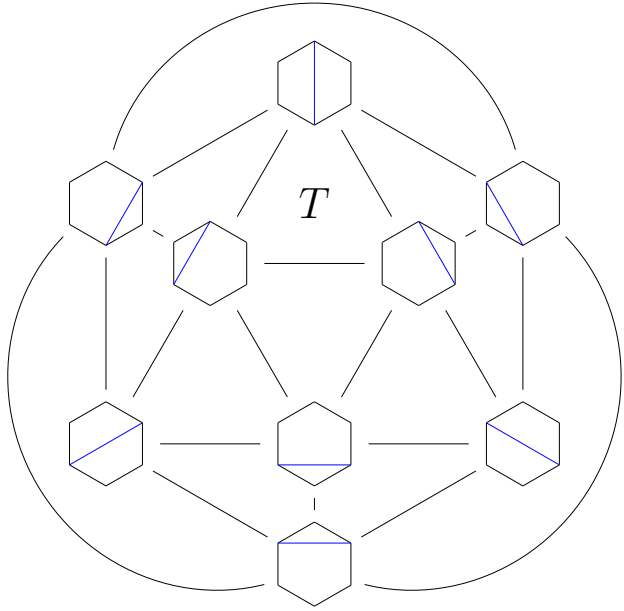
\begin{figure}[ht]
\begin{center}
\scalebox{0.8}{
\begin{tikzpicture}
\coordinate (0) at (0,0); 
\coordinate (u*) at (90:5.33); 
\coordinate (u) at (90:4);  
\coordinate (ul) at (150:4);   
\coordinate (ur) at (30:4);   
\coordinate (uml) at (150:2);   
\coordinate (umr) at (30:2);   
\coordinate (dmc) at (-90:2);  
\coordinate (dl) at (-150:4); 
 \coordinate (dl*) at (-150:5.33); 
\coordinate (dr) at (-30:4);  
\coordinate (dr*) at (-30:5.33); 
\coordinate (d) at (-90:4);  
\draw (u) to (ul);
\draw (u) to (ur);
\draw (ul) to (uml);
\draw (ur) to (umr);
\draw (umr) to (uml);
\draw (u) to (uml);
\draw (u) to (umr);
\draw (uml) to (dl);
\draw (umr) to (dr);
\draw (ul) to (dl);
\draw (ur) to (dr);
\draw (uml) to (dmc);
\draw (umr) to (dmc);
\draw (dl) to (dmc);
\draw (dr) to (dmc);
\draw (dl) to (d);
\draw (dmc) to (d);
\draw (dr) to (d);
\draw(ul) [out=90,in=180]to (u*);
\draw(ur) [out=90,in=0]to (u*);
\draw(ul) [out=210,in=120]to (dl*);
\draw(d) [out=210,in=300]to (dl*);
\draw(d) [out=330,in=240]to (dr*);
\draw(ur) [out=330,in=60]to (dr*);
\fill [white](u) circle (0.9cm);
\fill[white] (ul) circle (0.9cm);
\fill[white] (ur) circle (0.9cm);
\fill[white] (uml) circle (0.9cm);
\fill[white] (umr) circle (0.9cm);
\fill[white] (dmc) circle (0.9cm);
\fill[white] (dl) circle (0.9cm);
\fill[white] (dr) circle (0.9cm);
\fill[white] (d) circle (0.9cm);
\draw (u)++(210:0.7cm) to ++(90:0.7cm);
\draw (u)++(150:0.7cm) to ++(30:0.7cm);
\draw (u)++(90:0.7cm) to ++(330:0.7cm);
\draw (u)++(30:0.7cm) to ++(270:0.7cm);
\draw (u)++(330:0.7cm) to ++(210:0.7cm);
\draw (u)++(270:0.7cm) to ++(150:0.7cm);
\draw (ur)++(210:0.7cm) to ++(90:0.7cm);
\draw (ur)++(150:0.7cm) to ++(30:0.7cm);
\draw (ur)++(90:0.7cm) to ++(330:0.7cm);
\draw (ur)++(30:0.7cm) to ++(270:0.7cm);
\draw (ur)++(330:0.7cm) to ++(210:0.7cm);
\draw (ur)++(270:0.7cm) to ++(150:0.7cm);
\draw (ul)++(210:0.7cm) to ++(90:0.7cm);
\draw (ul)++(150:0.7cm) to ++(30:0.7cm);
\draw (ul)++(90:0.7cm) to ++(330:0.7cm);
\draw (ul)++(30:0.7cm) to ++(270:0.7cm);
\draw (ul)++(330:0.7cm) to ++(210:0.7cm);
\draw (ul)++(270:0.7cm) to ++(150:0.7cm);
\draw (umr)++(210:0.7cm) to ++(90:0.7cm);
\draw (umr)++(150:0.7cm) to ++(30:0.7cm);
\draw (umr)++(90:0.7cm) to ++(330:0.7cm);
\draw (umr)++(30:0.7cm) to ++(270:0.7cm);
\draw (umr)++(330:0.7cm) to ++(210:0.7cm);
\draw (umr)++(270:0.7cm) to ++(150:0.7cm);
\draw (uml)++(210:0.7cm) to ++(90:0.7cm);
\draw (uml)++(150:0.7cm) to ++(30:0.7cm);
\draw (uml)++(90:0.7cm) to ++(330:0.7cm);
\draw (uml)++(30:0.7cm) to ++(270:0.7cm);
\draw (uml)++(330:0.7cm) to ++(210:0.7cm);
\draw (uml)++(270:0.7cm) to ++(150:0.7cm);
\draw (dl)++(210:0.7cm) to ++(90:0.7cm);
\draw (dl)++(150:0.7cm) to ++(30:0.7cm);
\draw (dl)++(90:0.7cm) to ++(330:0.7cm);
\draw (dl)++(30:0.7cm) to ++(270:0.7cm);
\draw (dl)++(330:0.7cm) to ++(210:0.7cm);
\draw (dl)++(270:0.7cm) to ++(150:0.7cm);
\draw (dr)++(210:0.7cm) to ++(90:0.7cm);
\draw (dr)++(150:0.7cm) to ++(30:0.7cm);
\draw (dr)++(90:0.7cm) to ++(330:0.7cm);
\draw (dr)++(30:0.7cm) to ++(270:0.7cm);
\draw (dr)++(330:0.7cm) to ++(210:0.7cm);
\draw (dr)++(270:0.7cm) to ++(150:0.7cm);
\draw (dmc)++(210:0.7cm) to ++(90:0.7cm);
\draw (dmc)++(150:0.7cm) to ++(30:0.7cm);
\draw (dmc)++(90:0.7cm) to ++(330:0.7cm);
\draw (dmc)++(30:0.7cm) to ++(270:0.7cm);
\draw (dmc)++(330:0.7cm) to ++(210:0.7cm);
\draw (dmc)++(270:0.7cm) to ++(150:0.7cm);
\draw (d)++(210:0.7cm) to ++(90:0.7cm);
\draw (d)++(150:0.7cm) to ++(30:0.7cm);
\draw (d)++(90:0.7cm) to ++(330:0.7cm);
\draw (d)++(30:0.7cm) to ++(270:0.7cm);
\draw (d)++(330:0.7cm) to ++(210:0.7cm);
\draw (d)++(270:0.7cm) to ++(150:0.7cm);
\draw [blue](uml)++(90:0.7cm) to ++(240:1.21cm);
\draw [blue](umr)++(90:0.7cm) to ++(300:1.21cm);
\draw [blue](dmc)++(210:0.7cm) to ++(0:1.21cm);
\draw [blue](dl)++(210:0.7cm) to ++(30:1.39cm);
\draw [blue](dr)++(150:0.7cm) to ++(330:1.39cm);
\draw [blue](u)++(90:0.7cm) to ++(270:1.39cm);
\draw [blue](ul)++(270:0.7cm) to ++(60:1.21cm);
\draw [blue](d)++(150:0.7cm) to ++(0:1.21cm);
\draw [blue](ur)++(150:0.7cm) to ++(300:1.21cm);
\node (c) at (0,2) {{\Huge $T$}};
\end{tikzpicture}}
\end{center}
\caption{Triangulations of hexagon\label{hexagoncomplex}}
\end{figure}
\end{example}

\begin{theorem}\label{f=Int}\cite[Theorem 1.8]{Y}
 Let $T$ be a tagged triangulation of $(S,M)$. If $(S,M)$ is a $1$-punctured closed surface, for any plain arc $\de$ of $(S,M)$, we have $f_{z_{\de}}=\Int(T,\de)$. If not, for any tagged arc $\de$ of $(S,M)$, we have $f_{z_{\de}}=\Int(T,\de)$.
\end{theorem}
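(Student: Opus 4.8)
The plan is to deduce this from the known Laurent expansion formulas for cluster variables from surfaces and then read off the maximal $y$-degrees directly. First I would recall that, by the snake/band graph expansion of Musiker--Schiffler--Williams (for unpunctured surfaces, extended to punctured surfaces and tagged arcs by Musiker--Williams and subsequent work), the cluster variable $z_{\de}$ in $\cA(T)=\cA(Q_T)$ with principal coefficients has an expansion
\[
 z_{\de}\;=\;\frac{1}{\mathrm{cross}_T(\de)}\sum_{P} x(P)\,y(P),
\]
where $\mathrm{cross}_T(\de)=\prod_{t\in T}x_t^{\Int(t,\de)}$, the sum runs over perfect matchings $P$ of the snake graph (resp.\ band graph, in the $1$-punctured closed case) $G_{T,\de}$ attached to the pair $(T,\de)$, and $y(P)$ is the monomial in $y_1,\dots,y_n$ recording the height of $P$ above the minimal matching $P_-$. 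Specializing every $x_t$ to $1$ gives the $F$-polynomial $F_{z_{\de}}=\sum_{P}y(P)$.

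Second I would use the lattice structure on the set of perfect matchings: it has a minimum $P_-$ with $y(P_-)=1$ and a maximum $P_+$ with $y(P_+)=\prod_{t\in T}y_t^{d_t}$, where $d_t$ is the number of tiles of $G_{T,\de}$ carrying the label $t$; moreover $P_+$ is the unique matching realizing this monomial, so the coefficient of $\prod_t y_t^{d_t}$ in $F_{z_{\de}}$ is $1\neq 0$, while every $y(P)$ divides $y(P_+)$. Hence the maximal degree of $y_t$ in $F_{z_{\de}}$ is exactly $d_t$, i.e.\ $f_{z_{\de},t}=d_t$, and the theorem reduces to the identity $d_t=\Int(t,\de)$ for every $t\in T$. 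In the unpunctured case this is immediate, since the tiles of $G_{T,\de}$ are in bijection with the transversal crossing points of $\de$ with arcs of $T$, a tile labeled $t$ corresponding to a crossing with $t$; so $d_t=A=\Int(t,\de)$.

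Third — and this is the part requiring real work — I would treat the punctured cases. Here the graph $G_{T,\de}$ is modified to account for notched ends, for conjugate pairs, and (on a $1$-punctured closed surface) for plain arcs that wrap around the puncture, which force band graphs and ``good'' matchings in place of snake graphs and ordinary matchings. The key point is that the two correction terms $B$ and $C$ in $\Int(t,\de)=A+B+C$ are designed precisely so that the tile count $d_t$ of the modified graph still equals $A+B+C$: a tag mismatch at a commonly incident puncture contributes one extra tile (accounting for $B$), and the $-1$ correction for a conjugate pair $\overline{\de}=\overline{t}$ cancels a tile that would otherwise appear (accounting for $C$). I would verify this puzzle piece by puzzle piece, using the decomposition of a triangulation satisfying $(\Diamond)$ into triangle, $1$-puncture and $2$-puncture pieces together with the exceptional triangulation $T_3$, and the explicit quivers of Figure \ref{QT}; the $1$-punctured closed surface is handled separately, where by Theorem \ref{bijtc}(2) only plain arcs occur and $B=C=0$, so the content is exactly the band-graph tile count. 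An alternative, more uniform route replaces the snake graph by the string (or band) module $M_{\de}$ over the Jacobian algebra of $(Q_T,W_T)$: by Derksen--Weyman--Zelevinsky $F_{z_{\de}}(y)=\sum_{e}\chi(\mathrm{Gr}_e(M_{\de}))\prod_t y_t^{e_t}$, and since $\mathrm{Gr}_{\underline{\dim}M_{\de}}(M_{\de})$ is a single reduced point while every other occurring $e$ satisfies $e\le\underline{\dim}M_{\de}$ coordinatewise, one gets $f_{z_{\de},t}=(\underline{\dim}M_{\de})_t$ and is reduced to identifying this with $\Int(t,\de)$. Either way, the main obstacle is the same: matching the representation-theoretic (or snake-graph) dimension count near punctures with the combinatorial intersection number, including the $B$ and $C$ corrections; away from punctures everything is a routine crossing-counting argument.
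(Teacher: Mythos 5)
This theorem is not proved in the paper at all: it is imported verbatim from \cite[Theorem~1.8]{Y}, so there is no internal proof to compare your attempt against. What you have written is essentially a reconstruction of the strategy of that cited reference (whose whole point is the combinatorial cluster expansion formula), so your route is the ``right'' one rather than a genuinely different one.

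As a self-contained argument, the unpunctured part of your sketch is complete and correct: the set of perfect matchings of the snake graph $G_{T,\de}$ is a distributive lattice with unique minimal and maximal matchings, $y(P)$ divides $y(P_+)=\prod_t y_t^{d_t}$ for every $P$, and $d_t$ equals the number of transversal crossings of $\de$ with $t$, which is $\Int(t,\de)$ since $B=C=0$. Two caveats remain. First, for band graphs you need the analogous statement for \emph{good} matchings (existence of unique minimal and maximal good matchings with every height monomial dividing the top one); this is true but is a separate result of Musiker--Schiffler--Williams, not a formal consequence of the snake-graph lattice. Second, and more importantly, the punctured case is where the entire content of the theorem sits, and there your text is a plan rather than a proof: the assertion that ``a tag mismatch at a common puncture contributes one extra tile (accounting for $B$) and the conjugate-pair correction cancels a tile (accounting for $C$)'' is precisely the identity to be established, since for a notched end the expansion is not over matchings of a mildly modified graph but over $\gamma$-symmetric matchings of the snake graph of the ideal arc $\ell\overline{\de}$, and extracting the maximal $y_t$-degree from that sum is a genuine computation. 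Your alternative route via $F_{z_\de}(y)=\sum_e\chi(\mathrm{Gr}_e(M_\de))\,y^e$ has the same residual gap in a different guise: it reduces the theorem to $\underline{\dim}\,M_\de=\Int(T,\de)$ componentwise, which near punctures again encodes the $B$ and $C$ corrections and must be checked against the string/band combinatorics of the Jacobian algebra of $(Q_T,W_T)$. So: correct framing, correct reduction, correct treatment of the unpunctured case, but the punctured verification that the theorem actually asserts is delegated rather than done.
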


 Thanks to Theorem \ref{f=Int}, we can apply the results in the previous sections to the theory of cluster algebras and get the main result in this paper.

\begin{corollary}\label{Funique}
 Let $T$ be a tagged triangulation of $(S,M)$. If tagged triangulations $T'$ and $T''$ of $(S,M)$ satisfy $F_{\mathbf{z}_{T'}}=F_{\mathbf{z}_{T''}}$, then $\mathbf{z}_{T'}=\mathbf{z}_{T''}$.
\end{corollary}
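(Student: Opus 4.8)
The plan is to deduce the statement directly from three facts already at our disposal: the identification $f_{z_\de}=\Int(T,\de)$ of Theorem~\ref{f=Int}, the rigidity statement Theorem~\ref{main}, and the bijection between tagged triangulations and clusters of Theorem~\ref{bijtc}. No new surface topology is needed here; the substantive work has already been spent in proving Theorem~\ref{main} (ultimately Theorem~\ref{intinj}), so the proof of the corollary is a short translation.

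First I would unwind the definition of the $F$-matrix. By construction $F_{\mathbf{z}_{T'}}$ is the $n\times n$ matrix whose columns are the $f$-vectors $f_{z_\de}$ of the cluster variables $z_\de$ with $\de\in T'$, the rows being indexed, in a once-and-for-all fixed order, by the initial cluster variables of $\cA(T)$, that is, by the tagged arcs of $T$. Theorem~\ref{f=Int} gives $f_{z_\de}=\Int(T,\de)$ for every tagged arc $\de$ of $(S,M)$, so the column of $F_{\mathbf{z}_{T'}}$ labelled by $\de$ is exactly the intersection vector $\Int(T,\de)$. Hence $F_{\mathbf{z}_{T'}}=\Int(T,T')$ and $F_{\mathbf{z}_{T''}}=\Int(T,T'')$ as matrices, with rows ordered according to $T$.

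Next, the hypothesis $F_{\mathbf{z}_{T'}}=F_{\mathbf{z}_{T''}}$ now reads $\Int(T,T')=\Int(T,T'')$, in particular up to a permutation of columns. Theorem~\ref{main} then yields $T'=T''$, and the bijection $T'\mapsto\mathbf{z}_{T'}$ of Theorem~\ref{bijtc} upgrades this to $\mathbf{z}_{T'}=\mathbf{z}_{T''}$, as desired. Finally I would dispose of the one exceptional situation: when $(S,M)$ is a $1$-punctured closed surface, Theorem~\ref{bijtc}(2) supplies a bijection only between the tagged triangulations consisting of plain arcs and the clusters of $\cA(T)$, and Theorem~\ref{f=Int} correspondingly identifies $f$-vectors with intersection vectors of plain arcs; so there one takes $T'$ and $T''$ to be the plain-arc triangulations representing the two clusters and repeats the argument verbatim, using that Theorem~\ref{main} holds for arbitrary $(S,M)$.

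I do not expect a genuine obstacle at this stage, precisely because everything substantive is packaged into Theorem~\ref{main}; the only points requiring care are keeping the row/column labelling of $F_{\mathbf{z}_{T'}}$ consistent with the fixed identification of initial cluster variables with arcs of $T$, and not overlooking the $1$-punctured closed surface caveat just discussed.
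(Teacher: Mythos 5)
Your proposal is correct and follows essentially the same route as the paper, which simply derives the corollary from Theorems \ref{main} and \ref{f=Int} (with Theorem \ref{bijtc} implicitly supplying the translation between triangulations and clusters). Your extra care with the column/row labelling and the $1$-punctured closed surface case is a sound elaboration of the same one-line argument rather than a different method.
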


\begin{proof}
 The assertion follows immediately from Theorems \ref{main} and \ref{f=Int}. 
\end{proof}

\begin{definition}
 For a cluster algebra $\cA$, we say that $\cA$ {\it detects cluster variables by $f$-vectors} if it satisfies the following condition:\par
 $\bullet$ For non-initial cluster variables $z$ and $z'$ of $\cA$, if $f_z=f_{z'}$, then $z=z'$.
\end{definition}

\begin{proposition}\label{ec2}
 Let $T$ be a tagged triangulation of $(S,M)$. Then $T$ detects cluster variables by $f$-vectors if and only if either of the following conditions holds:
\begin{itemize}
 \item $(S,M)$ is a $1$-punctured closed surface;
 \item there are no tagged arcs $\de$ and $\epsilon$ of $T$ connecting two (possibly same) common punctures such that $\overline{\de} \neq \overline{\epsilon}$.
\end{itemize}
\end{proposition}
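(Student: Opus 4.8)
The plan is to reduce the proposition to results already in hand, using the identification $f_{z_\de}=\Int(T,\de)$ of Theorem \ref{f=Int}, the tagged arc/cluster variable bijection of Theorem \ref{bijtc}, and the characterization of ``detecting tagged arcs'' in Theorem \ref{ec}. As a first step I would pass to the case where $T$ satisfies $(\Diamond)$: since $Q_{T^{(p)}}=Q_T$ we have $\cA(T^{(p)})=\cA(T)$ as cluster algebras with principal coefficients, so the property of detecting cluster variables by $f$-vectors is unaffected by changing the tags of $T$, and both conditions on the right-hand side are visibly invariant under such changes (a $1$-punctured closed surface remains one, and a pair $\gamma,\gamma'\in T$ joining common punctures with $\overline\gamma\ne\overline{\gamma'}$ is carried to such a pair in $T^{(p)}$). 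Then I would split on whether $(S,M)$ is a $1$-punctured closed surface.

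In the $1$-punctured closed surface case, Theorem \ref{bijtc}(2) tells us that the cluster variables of $\cA(T)$ are exactly the $z_\de$ with $\de$ a plain arc, the non-initial ones being those with $\de\notin T$. The key observation here is that a plain arc $\de\notin T$ satisfies $\hat\de=\de$ (it is notched nowhere) and, since $\overline\de=\de\notin T$, the definition of ${\sf M}_T$ gives ${\sf M}_T(\de)=\de$; in particular $\de\in A$. So if two non-initial cluster variables $z_\de,z_\epsilon$ have $f_{z_\de}=f_{z_\epsilon}$, then $\Int(T,\de)=\Int(T,\epsilon)$ by Theorem \ref{f=Int}, and Corollary \ref{taginj} forces $\de=\epsilon$, hence $z_\de=z_\epsilon$. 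Thus every tagged triangulation $T$ on a $1$-punctured closed surface detects cluster variables by $f$-vectors, which matches the first bullet of the statement.

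In the remaining case, Theorem \ref{bijtc}(1) identifies cluster variables with tagged arcs and the non-initial ones with the tagged arcs outside $T$. I would first record the elementary fact that $\Int(T,\de)=0$ if and only if $\de\in T$: if the vector vanishes, $\de$ is compatible with every arc of $T$, so maximality of $T$ gives $\de\in T$, and the converse is clear. Combined with Theorem \ref{f=Int}, the assertion ``$T$ detects cluster variables by $f$-vectors'' then unwinds to: for tagged arcs $\de,\epsilon\notin T$ with $\Int(T,\de)=\Int(T,\epsilon)$ one has $\de=\epsilon$; and since having a nonzero intersection vector is exactly the same as lying outside $T$, this is verbatim the statement that $T$ detects tagged arcs. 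Applying Theorem \ref{ec} then yields the second bullet, and assembling the two cases completes the proof of the equivalence.

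I expect the only genuinely delicate point to be the $1$-punctured closed surface case: this is precisely where the proposition departs from Theorem \ref{ec}, because the $2$-notched arcs that obstruct ``detecting tagged arcs'' in the ``only if'' half of the proof of Theorem \ref{ec} are no longer cluster variables there, so that obstruction disappears and detection becomes automatic. The supporting subtleties --- the reduction to $(\Diamond)$ so that Corollary \ref{taginj} is available, and the remark that an arc outside a triangulation always has a nonzero intersection vector --- are routine but should not be skipped.
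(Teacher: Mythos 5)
Your proposal is correct and follows essentially the same route as the paper: split on whether $(S,M)$ is a $1$-punctured closed surface, use Theorem \ref{bijtc}(2) together with Corollary \ref{taginj} and Theorem \ref{f=Int} in that case, and reduce to Theorem \ref{ec} via Theorems \ref{bijtc}(1) and \ref{f=Int} otherwise. The details you spell out (the reduction to $(\Diamond)$, the observation that $\Int(T,\de)=0$ exactly when $\de\in T$, and the fact that plain arcs outside $T$ lie in $A$) are exactly the points the paper's terse proof leaves implicit.
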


\begin{proof}
 If $(S,M)$ is not a $1$-punctured closed surface, the  assertion follows from Theorems \ref{ec}, \ref{bijtc} and \ref{f=Int}. If $(S,M)$ is a $1$-punctured closed surface, there are no $2$-notched arcs corresponding to cluster variables by Theorem \ref{bijtc}(2). Therefore, the  assertion follows from Corollary \ref{taginj} and Theorem \ref{f=Int}.
\end{proof}

\begin{corollary}
 $(1)$ If $S$ is not closed, then there is at least one tagged triangulation of $(S,M)$ detecting cluster variables by $f$-vectors.

 $(2)$ If $S$ is closed, then there is at least one tagged triangulation of $(S,M)$ detecting cluster variables by $f$-vectors if and only if $(S,M)$ is a $1$-punctured closed surface or the inequality \eqref{ineq} holds.

 $(3)$ All tagged triangulation of $(S,M)$ detect cluster variables by $f$-vectors if and only if $(S,M)$ is one of the followings:
\begin{itemize}
 \item a $1$-punctured closed surface;
 \item a marked surface with no punctures;
 \item a marked surface of genus $0$ with exactly $1$ boundary component and at most $2$ punctures;
 \item a marked surface of genus $0$ with exactly $2$ boundary components and a $1$ puncture.
\end{itemize}
\end{corollary}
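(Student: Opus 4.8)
The plan is to derive all three assertions from Proposition \ref{ec2} together with the corresponding parts of Theorem \ref{class}. The key preliminary step is to isolate the following dichotomy, immediate from Proposition \ref{ec2} and Theorem \ref{ec}: if $(S,M)$ is a $1$-punctured closed surface, then \emph{every} tagged triangulation detects cluster variables by $f$-vectors; and if $(S,M)$ is not a $1$-punctured closed surface, then a tagged triangulation $T$ detects cluster variables by $f$-vectors exactly when $T$ detects tagged arcs. I would also record the elementary fact that a $1$-punctured closed surface never satisfies \eqref{ineq}: such a surface has genus $g\ge 1$ (a genus-$0$ closed surface with one puncture is a sphere with at most three punctures, which is excluded from the outset), and for every $g\ge 1$ the right-hand side of \eqref{ineq} is at least $7>1=p$. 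This is what makes the alternative in $(2)$ a genuine disjunction rather than a redundancy.

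For $(1)$, since $S$ is not closed, $(S,M)$ is not a $1$-punctured closed surface, so by the dichotomy the two detection properties coincide, and Theorem \ref{class}(1) then supplies the required tagged triangulation. For $(2)$, if $(S,M)$ is a $1$-punctured closed surface then every tagged triangulation works, which matches the first clause of the disjunction; otherwise the dichotomy identifies the two properties, and Theorem \ref{class}(2) provides a tagged triangulation detecting tagged arcs precisely when \eqref{ineq} holds, matching the second clause.

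For $(3)$, first suppose $(S,M)$ lies in the listed family. If it is a $1$-punctured closed surface, every tagged triangulation detects cluster variables by $f$-vectors by the dichotomy; otherwise it is one of the three remaining cases, which have non-empty boundary and hence are not $1$-punctured closed surfaces, so the dichotomy reduces the claim to ``all tagged triangulations detect tagged arcs'', which holds by Theorem \ref{class}(3). Conversely, if all tagged triangulations of $(S,M)$ detect cluster variables by $f$-vectors, then either $(S,M)$ is a $1$-punctured closed surface, hence in the list, or, by the dichotomy, all tagged triangulations detect tagged arcs, and Theorem \ref{class}(3) forces $(S,M)$ to be one of its three listed marked surfaces, so $(S,M)$ is again in the list.

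I do not anticipate a real obstacle here: by Theorem \ref{bijtc}(2) only plain arcs give cluster variables on a $1$-punctured closed surface, and this special behaviour has already been absorbed into Proposition \ref{ec2}, so the argument is just a matter of routing each case through that proposition. The only point requiring care is the bookkeeping that keeps the $1$-punctured-closed-surface case disjoint from both the \eqref{ineq} case and the three boundary cases, which the genus estimate noted in the first paragraph takes care of.
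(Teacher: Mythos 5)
Your proposal is correct and follows the same route as the paper, whose proof is simply that the assertion follows from Theorem \ref{class} and Proposition \ref{ec2}; your ``dichotomy'' is exactly the content of Proposition \ref{ec2} read against Theorem \ref{ec}, and the case-routing you describe is the intended bookkeeping. The extra observation that a $1$-punctured closed surface never satisfies \eqref{ineq} is a nice sanity check but is not logically required, since $A\vee(B\wedge\neg A)$ is equivalent to $A\vee B$ in any case.
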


\begin{proof}
 The assertion follows immediately from Theorem \ref{class} and Proposition \ref{ec2}.
\end{proof}

\section{List of segments in each puzzle piece}\label{segments}

 In this section, we prepare some tables to show Theorem \ref{intinj}.

 We fix a tagged triangulation $T$ of $(S,M)$ satisfying $(\Diamond)$ which is not $T_3$. Let $\square$ be a puzzle piece of $T$. If $\square$ is a triangle piece or a $1$-puncture piece, we say that $\square$ is Case($-$) (resp., Case($\tau_i$)) if its edges are not loops (resp., its edge $\tau_i$ is an only loop in $\square$). Similarly, we can define Case($\tau_i$,$\tau_{i+1}$) if $\square$ is a triangle piece (see Figures \ref{Ctri} and \ref{C1}).
\begin{figure}[ht]
\begin{minipage}{0.48\hsize}
\[
\begin{tikzpicture}[baseline=-3mm]
 \coordinate (0) at (0,0);
 \coordinate (1) at (120:1.5);
 \coordinate (2) at (180:1.5);
 \draw (0) to (1);
 \draw (1) to (2);
 \draw (0) to (2);
 \fill(0) circle (0.7mm); \fill (1) circle (0.7mm); \fill (2) circle (0.7mm);
\end{tikzpicture}
     \hspace{5mm}
\begin{tikzpicture}
 \coordinate (0) at (0,0); 
 \coordinate (1) at (0,1);
 \coordinate (2) at (0,-1);
 \draw (0) circle (1);
 \draw (2) .. controls (0.5,0) and (0.3,0.2) .. (0,0.2);
 \draw (2) .. controls (-0.5,0) and (-0.3,0.2) .. (0,0.2);
 \fill(1) circle (0.7mm); \fill(2) circle (0.7mm);
 \node at(0,0.35) {$\tau_i$};
\end{tikzpicture}
     \hspace{5mm}
\begin{tikzpicture}
 \coordinate (0) at (0,0);
 \coordinate (1) at (-0.5,0.1);
 \coordinate (1') at (0.5,0.1);
 \coordinate (2) at (0,-1);
 \draw (0) circle (1);
 \draw (2) .. controls (-1,-0.2) and (-0.7,0.1) .. (1);
 \draw (2) .. controls (0,-0.2) and (-0.3,0.1) .. (1);
 \draw (2) .. controls (1,-0.2) and (0.7,0.1) .. (1');
 \draw (2) .. controls (0,-0.2) and (0.3,0.1) .. (1');
 \fill(2) circle (0.7mm);
 \node at(-0.4,0.3) {$\tau_{i+1}$};
 \node at(0.4,0.3) {$\tau_i$};
\end{tikzpicture}
\]
\begin{center}
 \hspace{1mm} Case($-$) \hspace{8mm} Case($\tau_i$) \hspace{8mm} Case($\tau_i$,$\tau_{i+1}$)
\end{center}
 \caption{Cases of a triangle piece}\label{Ctri}
\end{minipage}
\begin{minipage}{0.5\hsize}
\[
\begin{tikzpicture}
 \coordinate (0) at (0,0);
 \coordinate (1) at (0,-1);
 \coordinate (2) at (0,-2);
 \draw (2) to [out=180,in=180] (0);
 \draw (2) to [out=0,in=0] (0);
 \draw (1) to (2);
 \draw (1) to [out=60,in=120,relative] node[pos=0.2]{\rotatebox{40}{\footnotesize $\bowtie$}} (2);
 \fill(0) circle (0.7mm); \fill (1) circle (0.7mm); \fill (2) circle (0.7mm);
\end{tikzpicture}
     \hspace{5mm}
\begin{tikzpicture}
 \coordinate (0) at (0,0);
 \coordinate (1) at (-0.5,0.1);
 \coordinate (1') at (0.5,0.2); \fill (1') circle (0.7mm);
 \coordinate (2) at (0,-1);
 \draw (0) circle (1);
 \draw (2) .. controls (-1,-0.2) and (-0.7,0.1) .. (1);
 \draw (2) .. controls (0,-0.2) and (-0.3,0.1) .. (1);
 \draw (1') to (2);
 \draw (1') to [out=60,in=120,relative] node[pos=0.2]{\rotatebox{180}{\footnotesize $\bowtie$}} (2);
 \fill(2) circle (0.7mm);
 \node at(-0.4,0.3) {$\tau_1$};
\end{tikzpicture}
     \hspace{5mm}
\begin{tikzpicture}
 \coordinate (0) at (0,0);
 \coordinate (1) at (-0.5,0.2); \fill (1) circle (0.7mm);
 \coordinate (1') at (0.5,0.1);
 \coordinate (2) at (0,-1);
 \draw (0) circle (1);
 \draw (1) to (2);
 \draw (1) to [out=-60,in=-120,relative] node[pos=0.2]{\rotatebox{170}{\footnotesize $\bowtie$}} (2);
 \draw (2) .. controls (1,-0.2) and (0.7,0.1) .. (1');
 \draw (2) .. controls (0,-0.2) and (0.3,0.1) .. (1');
 \fill(2) circle (0.7mm);
 \node at(0.4,0.3) {$\tau_2$};
\end{tikzpicture}
\]
\begin{center}
 Case($-$) \hspace{7mm} Case($\tau_1$) \hspace{9mm} Case($\tau_2$)
\end{center}
 \caption{Cases of a $1$-puncture piece}\label{C1}
\end{minipage}
\end{figure}
 Let $\de$ be a tagged arc of $(S,M)$ which is not contained in $T$. We have the set of curves $\de \cap \square$ and call its each curve a {\it segment (of $\de$) in $\square$}. It is easy to show that Table \ref{S0} (resp., Table \ref{S1}, Table \ref{S2}) gives a complete list of segments of $\de$ in a triangle piece (resp., a $1$-puncture piece, a $2$-puncture piece), where $a_i$ is the intersection number of each segment and $\tau_i$. Moreover, we have the set of `curves' ${\sf M}_T(\de) \cap \square$ and call its each curve a {\it modified segment (of $\de$) in $\square$}. Let $m$ be a modified segment in $\square$ which is not a segment. If there are two distinct segments $s$ and $s'$ in $\square$ such that ${\sf M}_T(s)={\sf M}_T(s')=m$, then $m$ is one as in Figure \ref{ys}. Otherwise, there is exactly one segment $s$ in $\square$ such that ${\sf M}_T(s)=m$. In this case, abusing notation, we denote ${\sf M}_T(s)$ by $s$. In particular, Table \ref{sm} gives all segments $s$ in $\square$ such that $s \neq {\sf M}_T(s)$.

 On the other hand, it is also easy to show that Table \ref{ss0} (resp., Table \ref{ss1}, Table \ref{ss2}) gives a complete list of $\de \cap \square$ and ${\sf M}_T(\de) \cap \square$, where $\square$ is a triangle piece (resp., a $1$-puncture piece, a $2$-puncture piece). Note that if an end which is not in $\square$ is tagged notched at a vertex of $\square$, it does not appear in $\de \cap \square$, but appear in ${\sf M}_T(\de) \cap \square$. So we identify its end to the corresponding modified segment as in Figure \ref{out} in Tables \ref{ss0}, \ref{ss1} and \ref{ss2}. For example, in the five line from the top of Table \ref{ss0} Case ($-$), the segment $e_1$ is given by this identification.

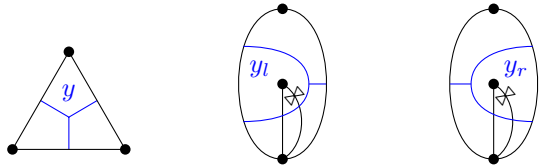
\begin{figure}[ht]
\begin{tikzpicture}[baseline=-15mm]
 \coordinate (0) at (0,0);
 \coordinate (1) at (-60:1.5);
 \coordinate (2) at (-120:1.5);
 \coordinate (o) at (-90:0.87);
 \draw[blue] (-120:0.75)--(o)--(-60:0.75) (o)--(-90:1.3);
 \draw (0)--(1)--(2)--(0);
 \fill(0) circle (0.7mm); \fill (1) circle (0.7mm); \fill (2) circle (0.7mm);
 \node[blue] at(0,-0.55) {$y$};
\end{tikzpicture}
   \hspace{10mm}
\begin{tikzpicture}
 \coordinate (0) at (0,0);
 \coordinate (1) at (0,-1);
 \coordinate (2) at (0,-2);
 \coordinate (3) at (-0.52,-0.5);
 \coordinate (5) at (-0.52,-1.5);
 \draw (2) to [out=180,in=180] (0);
 \draw (2) to [out=0,in=0] (0);
 \draw (1) to (2);
 \draw (1) to [out=60,in=120,relative] node[pos=0.2]{\rotatebox{40}{\footnotesize $\bowtie$}} (2);
 \draw[blue] (3) to [out=0,in=90] (0.35,-1);
 \draw[blue] (0.35,-1) to [out=-90,in=0] (5);
 \draw[blue] (0.35,-1) to (0.58,-1);
 \fill(0) circle (0.7mm); \fill (1) circle (0.7mm); \fill (2) circle (0.7mm);
 \node[blue] at(-0.3,-0.8) {$y_l$};
\end{tikzpicture}
   \hspace{10mm}
\begin{tikzpicture}
 \coordinate (0) at (0,0);
 \coordinate (1) at (0,-1);
 \coordinate (2) at (0,-2);
 \coordinate (4) at (0.52,-0.5);
 \coordinate (6) at (0.52,-1.5);;
 \draw (2) to [out=180,in=180] (0);
 \draw (2) to [out=0,in=0] (0);
 \draw (1) to (2);
 \draw (1) to [out=60,in=120,relative] node[pos=0.2]{\rotatebox{40}{\footnotesize $\bowtie$}} (2);
 \draw[blue] (4) to [out=180,in=90] (-0.3,-1);
 \draw[blue] (-0.3,-1) to [out=-90,in=180] (6);
 \draw[blue] (-0.3,-1) to (-0.58,-1);
 \fill(0) circle (0.7mm); \fill (1) circle (0.7mm); \fill (2) circle (0.7mm);
 \node[blue] at(0.3,-0.8) {$y_r$};
\end{tikzpicture}
\caption{Exceptional segments of modified tagged arcs}\label{ys}
\end{figure}

\begin{figure}[ht]
\[
\begin{tikzpicture}[baseline=5mm]
 \coordinate (0) at (0,0);
 \coordinate (1) at (120:1.5);
 \coordinate (2) at (180:1.5);
 \draw (0) to (1);
 \draw (1) to (2);
 \draw (0) to (2);  
 \draw[blue] (1) to node[pos=0.3]{\rotatebox{90}{\footnotesize $\bowtie$}} (0,1.3);
 \fill(0) circle (0.7mm); \fill (1) circle (0.7mm); \fill (2) circle (0.7mm);
\end{tikzpicture}
     \rightarrow
\begin{tikzpicture}[baseline=5mm]
 \coordinate (0) at (0,0);
 \coordinate (1) at (120:1.5);
 \coordinate (2) at (180:1.5);
  \coordinate (6) at (150:1.3);   
  \coordinate (7) at (120:0.75);  
 \draw (0) to (1);
 \draw (1) to (2);
 \draw (0) to (2);  
 \draw [blue] (6) to (7);
 \fill(0) circle (0.7mm); \fill (1) circle (0.7mm); \fill (2) circle (0.7mm);
\end{tikzpicture}
   \hspace{7mm}
\begin{tikzpicture}[scale=0.7,baseline=-10mm]
 \coordinate (0) at (0,0);
 \coordinate (1) at (0,-1);
 \coordinate (2) at (0,-2);
 \draw (2) to [out=180,in=180] (0);
 \draw (2) to [out=0,in=0] (0);
 \draw (1) to (2);
 \draw (1) to [out=60,in=120,relative] node[pos=0.2]{\rotatebox{40}{\footnotesize $\bowtie$}} (2);
 \draw[blue] (2) to node[pos=0.3]{\rotatebox{60}{\footnotesize $\bowtie$}} (1,-2.5);
 \fill(0) circle (0.7mm); \fill (1) circle (0.7mm); \fill (2) circle (0.7mm);
\end{tikzpicture}
     \rightarrow
\begin{tikzpicture}[scale=0.8,baseline=-10mm]
 \coordinate (0) at (0,0);
 \coordinate (1) at (0,-1);
 \coordinate (2) at (0,-2);
 \coordinate (3) at (0.52,-1.5);
 \coordinate (5) at (-0.52,-1.5);
 \draw (2) to [out=180,in=180] (0);
 \draw (2) to [out=0,in=0] (0);
 \draw (1) to (2);
 \draw (1) to [out=60,in=120,relative] node[pos=0.2]{\rotatebox{40}{\footnotesize $\bowtie$}} (2);
 \draw[blue] (3) to (5);
 \fill(0) circle (0.7mm); \fill (1) circle (0.7mm); \fill (2) circle (0.7mm);
\end{tikzpicture}
   \hspace{7mm}
\begin{tikzpicture}[scale=0.7,baseline=-8mm]
 \coordinate (0) at (0,0);
 \coordinate (1) at (0,-1);
 \coordinate (2) at (0,-2);
 \draw (2) to [out=180,in=180] (0);
 \draw (2) to [out=0,in=0] (0);
 \draw (1) to (2);
 \draw (1) to [out=60,in=120,relative] node[pos=0.2]{\rotatebox{40}{\footnotesize $\bowtie$}} (2);
 \draw[blue] (0) to node[pos=0.3]{\rotatebox{-60}{\footnotesize $\bowtie$}} (1,0.5);
 \fill(0) circle (0.7mm); \fill (1) circle (0.7mm); \fill (2) circle (0.7mm);
\end{tikzpicture}
     \rightarrow
\begin{tikzpicture}[scale=0.8,baseline=-10mm]
 \coordinate (0) at (0,0);
 \coordinate (1) at (0,-1);
 \coordinate (2) at (0,-2);
 \coordinate (3) at (0.52,-0.5);
 \coordinate (5) at (-0.52,-0.5);
 \draw (2) to [out=180,in=180] (0);
 \draw (2) to [out=0,in=0] (0);
 \draw (1) to (2);
 \draw (1) to [out=60,in=120,relative] node[pos=0.2]{\rotatebox{40}{\footnotesize $\bowtie$}} (2);
 \draw[blue] (3) to (5);
 \fill(0) circle (0.7mm); \fill (1) circle (0.7mm); \fill (2) circle (0.7mm);
\end{tikzpicture}
   \hspace{7mm}
\begin{tikzpicture}[baseline=-3mm,scale=0.7]
 \coordinate (0) at (0,0);
 \coordinate (1) at (-0.4,0);
 \coordinate (1') at (0.4,0);
 \coordinate (d) at (0,-1);
 \draw (0) circle (1);
 \draw[blue] (d) to node[pos=0.35]{\rotatebox{60}{\footnotesize $\bowtie$}} (1,-1.5);
 \fill (1) circle (1mm); \fill (1') circle (1mm); \fill (d) circle (1mm);
\end{tikzpicture}
     \rightarrow
\begin{tikzpicture}[baseline=-2mm,scale=0.7]
 \coordinate (0) at (0,0);
 \coordinate (1) at (-0.4,0);
 \coordinate (1') at (0.4,0);
 \coordinate (d) at (0,-1);
 \coordinate (l) at (-0.9,-0.4);
 \coordinate (r) at (0.9,-0.4);
 \draw (0) circle (1);
 \draw[blue] (l) to (r);
 \fill (1) circle (1mm); \fill (1') circle (1mm); \fill (d) circle (1mm);
\end{tikzpicture}
\]
\caption{Identifications in $\square$}\label{out}
\end{figure}
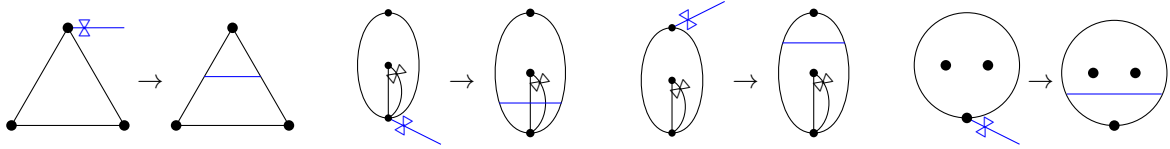

\clearpage
\renewcommand{\arraystretch}{1.7}
{\begin{table}[ht]
\begin{minipage}{0.38\hsize}
  \begin{center}\begin{tabular}{c|c|c}
 \multicolumn{3}{c}{}\\
 \multicolumn{2}{c|}{Segments} & $(a_1,a_2,a_3)$
\\
\hline
\multirow{3}{*}{\begin{tikzpicture}
 \coordinate (0) at (0,0);
 \coordinate (1) at (120:1.5);
 \coordinate (2) at (180:1.5);
 \coordinate (3) at (150:1.3);
  \coordinate (6) at (180:0.9);   
  \coordinate (7) at (120:0.9);  
 \draw (0) to (1);
 \draw (1) to node[above=3,left]{$\tau_i$} (2);
 \draw (0) to (2);  
 \draw [blue] (6) to node[fill=white,inner sep=1]{$e_i$} (7);
 \fill(0) circle (0.7mm); \fill (1) circle (0.7mm); \fill (2) circle (0.7mm);
\end{tikzpicture}}
 & $e_1$ & $(0,1,1)$\\\cline{2-3}
 & $e_2$ & $(1,0,1)$\\\cline{2-3}
 & $e_3$ & $(1,1,0)$
\\
\hline
\multirow{3}{*}{\begin{tikzpicture}
 \coordinate (0) at (0,0);
 \coordinate (1) at (120:1.5);
 \coordinate (2) at (180:1.5);
 \coordinate (3) at (150:1.3);
 \draw (0) to (1);
 \draw (1) to node[above=3,left]{$\tau_i$} (2);
 \draw (0) to (2); 
 \draw [blue] (0) to node[fill=white,inner sep=1,pos=0.6]{$h_i$} (3);
 \fill(0) circle (0.7mm); \fill (1) circle (0.7mm); \fill (2) circle (0.7mm);
\end{tikzpicture}}
 & $h_1$ & $(1,0,0)$\\\cline{2-3}
 & $h_2$ & $(0,1,0)$\\\cline{2-3}
 & $h_3$ & $(0,0,1)$
\\
\hline
 \multicolumn{3}{c}{}\\ \multicolumn{3}{c}{}\\ \multicolumn{3}{c}{}\\ \multicolumn{3}{c}{}\\ \multicolumn{3}{c}{}\\ \multicolumn{3}{c}{}\\
  \end{tabular}\end{center}
\end{minipage}
\begin{minipage}{0.58\hsize}
  \begin{center}\begin{tabular}{c|c|c|c|c}
 \multicolumn{2}{c|}{}& \multicolumn{3}{c}{$(a_1,a_2,a_3)$}\\\cline{3-5}
 \multicolumn{2}{c|}{Segments} & Case($-$) & Case($\tau_1$) & Case($\tau_1$,$\tau_2$)
\\
\hline
\multirow{3}{*}{\begin{tikzpicture}
 \coordinate (0) at (0,0);
 \coordinate (1) at (120:1.5);
 \coordinate (2) at (180:1.5);
 \coordinate (3) at (150:1.3);
 \draw (0) to (1);
 \draw (1) to node[above=3,left]{$\tau_i$} (2);
 \draw (0) to (2); 
 \draw [blue] (0) to node[fill=white,inner sep=1,pos=0.6]{$\overline{h_i}$} node[pos=0.2]{\rotatebox{60}{\footnotesize $\bowtie$}}  (3);
 \fill(0) circle (0.7mm); \fill (1) circle (0.7mm); \fill (2) circle (0.7mm);
\end{tikzpicture}}
 & $\overline{h_1}$ & \multirow{3}{*}{$(1,1,1)$} & $(1,1,1)$ & $(3,2,2)$\\\cline{2-2}\cline{4-5}
 & $\overline{h_2}$ & & $(2,2,1)$ & $(2,3,2)$\\\cline{2-2}\cline{4-5}
 & $\overline{h_3}$ & & $(2,1,2)$ & $(2,2,3)$
\\
\hline
\multirow{3}{*}{\begin{tikzpicture}
 \coordinate (0) at (0,0);
 \coordinate (1) at (120:1.5);
 \coordinate (2) at (180:1.5);
 \coordinate (3) at (150:1.3);
 \draw (0) to (1);
 \draw (1) to node[above=3,left]{$\tau_i$} (2);
 \draw (0) to (2);  
 \draw [blue] (1) [out=30, in=150, relative] to node[fill=white,inner sep=1,below=5,right=-3]{$\overline{E_{ir}}$} node[pos=0.2]{\rotatebox{-10}{\footnotesize $\bowtie$}}  (2);
 \fill(0) circle (0.7mm); \fill (1) circle (0.7mm); \fill (2) circle (0.7mm);
\end{tikzpicture}}
& $\overline{E_{1r}}$ & $(0,1,0)$ & $\times$ & $\times$\\\cline{2-5}
 & $\overline{E_{2r}}$ & $(0,0,1)$ & $(0,0,1)$ & $\times$\\\cline{2-5}
 & $\overline{E_{3r}}$ & $(1,0,0)$ & $(2,1,0)$ & $\times$
\\
\hline
\multirow{3}{*}{\begin{tikzpicture}
 \coordinate (0) at (0,0);
 \coordinate (1) at (120:1.5);
 \coordinate (2) at (180:1.5);
 \coordinate (3) at (150:1.3);
 \draw (0) to (1);
 \draw (1) to node[above=3,left]{$\tau_i$} (2);
 \draw (0) to (2);  
 \draw [blue] (1) [out=30, in=150, relative] to node[fill=white,inner sep=1,below=5,right=-3]{$\overline{E_{il}}$} node[pos=0.8]{\rotatebox{130}{\footnotesize $\bowtie$}}  (2);
 \fill(0) circle (0.7mm); \fill (1) circle (0.7mm); \fill (2) circle (0.7mm);
\end{tikzpicture}}
& $\overline{E_{1l}}$ & $(0,0,1)$ & $\times$ & $\times$\\\cline{2-5}
 & $\overline{E_{2l}}$ & $(1,0,0)$ & $(2,0,1)$ & $\times$\\\cline{2-5}
 & $\overline{E_{3l}}$ & $(0,1,0)$ & $(0,1,0)$ & $\times$
\\
\hline
\multirow{3}{*}{\begin{tikzpicture}
 \coordinate (0) at (0,0);
 \coordinate (1) at (120:1.5);
 \coordinate (2) at (180:1.5);
 \coordinate (3) at (150:1.3);
 \draw (0) to (1);
 \draw (1) to node[above=3,left]{$\tau_i$} (2);
 \draw (0) to (2);  
 \draw [blue] (1) [out=30, in=150, relative] to node[fill=white,inner sep=1,below=5,right=-3]{$\overline{\overline{E_{i}}}$} node[pos=0.2]{\rotatebox{-10}{\footnotesize $\bowtie$}} node[pos=0.8]{\rotatebox{130}{\footnotesize $\bowtie$}}  (2);
 \fill(0) circle (0.7mm); \fill (1) circle (0.7mm); \fill (2) circle (0.7mm);
\end{tikzpicture}}
& $\overline{\overline{E_{1}}}$ & $(2,1,1)$ & $(4,2,2)$ & \multirow{3}{*}{$(4,4,4)$}\\\cline{2-4}
 & $\overline{\overline{E_{2}}}$ & $(1,2,1)$ & \multirow{2}{*}{$(2,2,2)$} &\\\cline{2-3}
 & $\overline{\overline{E_{3}}}$ & $(1,1,2)$ & &
\\
\hline
  \end{tabular}\end{center}
\end{minipage}\vspace{3mm}
 \caption{Segments of a tagged arc in triangle pieces and the corresponding intersection sub-vectors $(a_1,a_2,a_3)$}\label{S0}
\end{table}}

\renewcommand{\arraystretch}{1.7}
{\begin{table}[!h]
\begin{minipage}{0.48\hsize}\begin{center}\vspace{-5mm}
\begin{tabular}{c|c}
 \multicolumn{2}{c}{Triangle piece case}\\\hline
 Segments & Modified segments\\\hline
 $\overline{h_i}$ & $y$\\\hline
 $\overline{2h_i}$ & $\{e_1,e_2,e_3\}$\\\hline
 $\overline{E_{ir}}$ & $h_{i+1}$\\\hline
 $\overline{E_{il}}$ & $h_{i-1}$\\\hline
 $\overline{\overline{E_i}}$ & $\{2e_{i-1},2e_{i+1}\}$\\\hline
 \multicolumn{2}{c}{\vspace{-0.5cm}}\\
 \multicolumn{2}{c}{$2$-puncture piece case}\\\hline
 $Q$ & $2h$\\\hline
  $\overline{R_0}$ & $\{r_0,r_1\}$\\\hline
 $\overline{L_0}$ & $\{l_0,l_{-1}\}$\\\hline
 $2\overline{c_n}$ & $\{s_{n-1},s_n,h\}$\\\hline
\end{tabular}
\end{center}\end{minipage}
\begin{minipage}{0.48\hsize}\begin{center}\vspace{-2.07cm}
\begin{tabular}{c|c}
 \multicolumn{2}{c}{$1$-puncture piece case}\\\hline
 Segments & Modified segments\\\hline
 $\overline{l_{\pm}}$ & $y_l$\\\hline
 $\overline{r_{\pm}}$ & $y_r$\\\hline
 $\underline{L}$ & $l_+$\\\hline
 $\underline{R}$ & $r_+$\\\hline
 $\overline{L}$ & $l_-$\\\hline
 $\overline{R}$ & $r_-$\\\hline
 $\underline{\overline{L}}$, $\underline{\overline{R}}$ & $\{u,d\}$\\\hline
 $\overline{P_{-}}$ & $\{r_p,l_p\}$\\\hline
\end{tabular}
\end{center}\end{minipage}\vspace{3mm}
 \caption{Segments $s$ and the corresponding modified segments ${\sf M}_T(s)$ in $\square$ such that $s \neq {\sf M}_T(s)$, where $h_{3k+j}=h_j$ and $e_{3k+j}=e_j$ for any $k, j \in \bZ$}\label{sm}
\end{table}}

\renewcommand{\arraystretch}{2}
{\begin{table}[p]
\vspace{10mm}
{\tabcolsep=5mm
\begin{tabular}{c|c|c|c|c|c}\hline
\begin{tikzpicture}[baseline=-10mm]
 \node at(0,0) {Segments};
\end{tikzpicture} &
\begin{tikzpicture}
 \coordinate (0) at (0,0);
 \coordinate (1) at (0,-1);
 \coordinate (2) at (0,-2);
 \coordinate (3) at (-0.52,-0.5);
 \coordinate (4) at (0.52,-0.5);
 \draw (1) to (2);
 \draw (1) to [out=60,in=120,relative] node[pos=0.2]{\rotatebox{40}{\footnotesize $\bowtie$}} (2);
 \draw (2) to [out=180,in=180] (0);
 \draw (2) to [out=0,in=0] (0);
 \draw[blue] (3) to node[fill=white,inner sep=0.5]{$u$} (4);
 \fill(0) circle (0.7mm); \fill (1) circle (0.7mm); \fill (2) circle (0.7mm);
 \node at(0,0.2) {};
\end{tikzpicture}
&
\begin{tikzpicture}
 \coordinate (0) at (0,0);
 \coordinate (1) at (0,-1);
 \coordinate (2) at (0,-2);
 \coordinate (5) at (-0.52,-1.5);
 \coordinate (6) at (0.52,-1.5);
 \draw (1) to (2);
 \draw (1) to [out=60,in=120,relative] node[pos=0.2]{\rotatebox{40}{\footnotesize $\bowtie$}} (2);
 \draw (2) to [out=180,in=180] (0);
 \draw (2) to [out=0,in=0] (0);
 \draw[blue] (5) to node[fill=white,inner sep=0.5,pos=0.3,above]{$d$} (6);
 \fill(0) circle (0.7mm); \fill (1) circle (0.7mm); \fill (2) circle (0.7mm);
\end{tikzpicture}
&
\begin{tikzpicture}
 \coordinate (0) at (0,0);
 \coordinate (1) at (0,-1);
 \coordinate (2) at (0,-2);
 \draw (1) to (2);
 \draw (1) to [out=60,in=120,relative] node[pos=0.2]{\rotatebox{40}{\footnotesize $\bowtie$}} (2);
 \draw (2) to [out=180,in=180] (0);
 \draw (2) to [out=0,in=0] (0);
 \draw[blue] (1) to node[right=-2]{\small$P_+$} (0);
 \fill(0) circle (0.7mm); \fill (1) circle (0.7mm); \fill (2) circle (0.7mm);
\end{tikzpicture}
&
\begin{tikzpicture}
 \coordinate (0) at (0,0);
 \coordinate (1) at (0,-1);
 \coordinate (2) at (0,-2);
 \coordinate (3) at (-0.52,-0.5);
 \coordinate (5) at (-0.52,-1.5);
 \draw (1) to (2);
 \draw (1) to [out=60,in=120,relative] node[pos=0.2]{\rotatebox{40}{\footnotesize $\bowtie$}} (2);
 \draw (2) to [out=180,in=180] (0);
 \draw (2) to [out=0,in=0] (0);
 \draw[blue] (3) to [out=0,in=90] (0.4,-1);
 \draw[blue] (0.4,-1) to [out=-90,in=0] (5);
 \fill(0) circle (0.7mm); \fill (1) circle (0.7mm); \fill (2) circle (0.7mm);
 \node[blue] at(-0.3,-0.8) {$l$};
\end{tikzpicture}
&
\begin{tikzpicture}
 \coordinate (0) at (0,0);
 \coordinate (1) at (0,-1);
 \coordinate (2) at (0,-2);
 \coordinate (4) at (0.52,-0.5);
 \coordinate (6) at (0.52,-1.5);;
 \draw (1) to (2);
 \draw (1) to [out=60,in=120,relative] node[pos=0.2]{\rotatebox{40}{\footnotesize $\bowtie$}} (2);
 \draw (2) to [out=180,in=180] (0);
 \draw (2) to [out=0,in=0] (0);
  \draw[blue] (4) to [out=180,in=90] (-0.4,-1);
 \draw[blue] (-0.4,-1) to [out=-90,in=180] (6);
 \fill(0) circle (0.7mm); \fill (1) circle (0.7mm); \fill (2) circle (0.7mm);
 \node[blue] at(0.3,-0.8) {$r$};
\end{tikzpicture}
\\\hline
 \hspace{-3mm}$(a_1,a_2,a_3,a_4)$\hspace{-3mm} & (1,1,0,0) & (1,1,1,1) & (0,0,0,1) & (2,0,1,1) & (0,2,1,1)
\\\hline\hline
\begin{tikzpicture}
 \coordinate (0) at (0,0);
 \coordinate (1) at (0,-1);
 \coordinate (2) at (0,-2);
 \coordinate (5) at (-0.52,-1.5);
 \draw (1) to (2);
 \draw (1) to [out=60,in=120,relative] node[pos=0.2]{\rotatebox{40}{\footnotesize $\bowtie$}} (2);
 \draw (2) to [out=180,in=180] (0);
 \draw (2) to [out=0,in=0] (0);
 \draw[blue] (0) .. controls (0.8,-1.3) and (0,-1.7) .. (5);
 \fill(0) circle (0.7mm); \fill (1) circle (0.7mm); \fill (2) circle (0.7mm);
 \node[blue] at(0,-0.6) {$l_+$};
\end{tikzpicture}
&
\begin{tikzpicture}
 \coordinate (0) at (0,0);
 \coordinate (1) at (0,-1);
 \coordinate (2) at (0,-2);
 \coordinate (6) at (0.52,-1.5);
 \draw (1) to (2);
 \draw (1) to [out=60,in=120,relative] node[pos=0.2]{\rotatebox{40}{\footnotesize $\bowtie$}} (2);
 \draw (2) to [out=180,in=180] (0);
 \draw (2) to [out=0,in=0] (0);
 \draw[blue] (0) .. controls (-0.8,-1.3) and (0,-1.7) .. (6);
 \fill(0) circle (0.7mm); \fill (1) circle (0.7mm); \fill (2) circle (0.7mm);
 \node[blue] at(0.1,-0.6) {$r_+$};
\end{tikzpicture}
&
\begin{tikzpicture}
 \coordinate (0) at (0,0);
 \coordinate (1) at (0,-1);
 \coordinate (2) at (0,-2);
 \coordinate (3) at (-0.52,-0.5);
 \draw (1) to (2);
 \draw (1) to [out=60,in=120,relative] node[pos=0.2]{\rotatebox{40}{\footnotesize $\bowtie$}} (2);
 \draw (2) to [out=180,in=180] (0);
 \draw (2) to [out=0,in=0] (0);
 \draw[blue] (2) .. controls (0.9,-1.5) and (0.3,-0.2) .. (3);
 \fill(0) circle (0.7mm); \fill (1) circle (0.7mm); \fill (2) circle (0.7mm);
 \node[blue] at(-0.3,-0.8) {$l_-$};
\end{tikzpicture}
&
\begin{tikzpicture}
 \coordinate (0) at (0,0);
 \coordinate (1) at (0,-1);
 \coordinate (2) at (0,-2);
 \coordinate (4) at (0.52,-0.5);
 \draw (1) to (2);
 \draw (1) to [out=60,in=120,relative] node[pos=0.2]{\rotatebox{40}{\footnotesize $\bowtie$}} (2);
 \draw (2) to [out=180,in=180] (0);
 \draw (2) to [out=0,in=0] (0);
 \draw[blue] (2) .. controls (-0.8,-0.7) and (0,-0.3) .. (4);
 \fill(0) circle (0.7mm); \fill (1) circle (0.7mm); \fill (2) circle (0.7mm);
 \node[blue] at(0.3,-0.8) {$r_-$};
 \node at(0,0.2) {};
\end{tikzpicture}
&
\begin{tikzpicture}
 \coordinate (0) at (0,0);
 \coordinate (1) at (0,-1);
 \coordinate (2) at (0,-2);
 \coordinate (l) at (-0.6,-1);
 \draw (1) to (2);
 \draw (1) to [out=60,in=120,relative] node[pos=0.2]{\rotatebox{40}{\footnotesize $\bowtie$}} (2);
 \draw (2) to [out=180,in=180] (0);
 \draw (2) to [out=0,in=0] (0);
 \draw[blue] (l) to (1);
 \fill(0) circle (0.7mm); \fill (1) circle (0.7mm); \fill (2) circle (0.7mm);
 \node[blue] at(-0.2,-0.7) {$l_p$};
\end{tikzpicture}
&
\begin{tikzpicture}
 \coordinate (0) at (0,0);
 \coordinate (1) at (0,-1);
 \coordinate (2) at (0,-2);
 \coordinate (r) at (0.6,-1);
 \draw (1) to (2);
 \draw (1) to [out=60,in=120,relative] node[pos=0.2]{\rotatebox{40}{\footnotesize $\bowtie$}} (2);
 \draw (2) to [out=180,in=180] (0);
 \draw (2) to [out=0,in=0] (0);
 \draw[blue] (r) to (1);
 \fill(0) circle (0.7mm); \fill (1) circle (0.7mm); \fill (2) circle (0.7mm);
 \node[blue] at(0.2,-0.7) {$r_p$};
\end{tikzpicture}
\\\hline
 (1,0,1,1) & (0,1,1,1) & (1,0,0,0) & (0,1,0,0) & (1,0,0,1) & (0,1,0,1)
\\\hline\hline
\begin{tikzpicture}
 \coordinate (0) at (0,0);
 \coordinate (1) at (0,-1);
 \coordinate (2) at (0,-2);
 \coordinate (5) at (-0.52,-1.5);
 \draw (1) to (2);
 \draw (1) to [out=60,in=120,relative] node[pos=0.2]{\rotatebox{40}{\footnotesize $\bowtie$}} (2);
 \draw (2) to [out=180,in=180] (0);
 \draw (2) to [out=0,in=0] (0);
 \draw[blue] (0) .. controls (0.8,-1.3) and (0,-1.7) .. node[pos=0.05]{\rotatebox{30}{\footnotesize $\bowtie$}} (5);
 \fill(0) circle (0.7mm); \fill (1) circle (0.7mm); \fill (2) circle (0.7mm);
 \node[blue] at(-0.1,-0.6) {$\overline{l_+}$};
\end{tikzpicture}
&
\begin{tikzpicture}
 \coordinate (0) at (0,0);
 \coordinate (1) at (0,-1);
 \coordinate (2) at (0,-2);
 \coordinate (6) at (0.52,-1.5);
 \draw (1) to (2);
 \draw (1) to [out=60,in=120,relative] node[pos=0.2]{\rotatebox{40}{\footnotesize $\bowtie$}} (2);
 \draw (2) to [out=180,in=180] (0);
 \draw (2) to [out=0,in=0] (0);
 \draw[blue] (0) .. controls (-0.8,-1.3) and (0,-1.7) .. node[pos=0.05]{\rotatebox{150}{\footnotesize $\bowtie$}} (6);
 \fill(0) circle (0.7mm); \fill (1) circle (0.7mm); \fill (2) circle (0.7mm);
 \node[blue] at(0.1,-0.6) {$\overline{r_+}$};
\end{tikzpicture}
&
\begin{tikzpicture}
 \coordinate (0) at (0,0);
 \coordinate (1) at (0,-1);
 \coordinate (2) at (0,-2);
 \coordinate (3) at (-0.52,-0.5);
 \draw (1) to (2);
 \draw (1) to [out=60,in=120,relative] node[pos=0.2]{\rotatebox{40}{\footnotesize $\bowtie$}} (2);
 \draw (2) to [out=180,in=180] (0);
 \draw (2) to [out=0,in=0] (0);
 \draw[blue] (2) .. controls (0.85,-1.5) and (0.3,-0.2) .. node[pos=0.25]{\rotatebox{-25}{\footnotesize $\bowtie$}} (3);
 \fill(0) circle (0.7mm); \fill (1) circle (0.7mm); \fill (2) circle (0.7mm);
 \node[blue] at(-0.3,-0.8) {$\overline{l_-}$};
\end{tikzpicture}
&
\begin{tikzpicture}
 \coordinate (0) at (0,0);
 \coordinate (1) at (0,-1);
 \coordinate (2) at (0,-2);
 \coordinate (4) at (0.52,-0.5);
 \draw (1) to (2);
 \draw (1) to [out=60,in=120,relative] node[pos=0.2]{\rotatebox{40}{\footnotesize $\bowtie$}} (2);
 \draw (2) to [out=180,in=180] (0);
 \draw (2) to [out=0,in=0] (0);
 \draw[blue] (2) .. controls (-0.8,-0.7) and (0,-0.3) .. node[pos=0.1]{\rotatebox{30}{\footnotesize $\bowtie$}} (4);
 \fill(0) circle (0.7mm); \fill (1) circle (0.7mm); \fill (2) circle (0.7mm);
 \node[blue] at(0.3,-0.8) {$\overline{r_-}$};
 \node at(0,0.2) {};
\end{tikzpicture}
&
\begin{tikzpicture}
 \coordinate (0) at (0,0);
 \coordinate (1) at (0,-1);
 \coordinate (2) at (0,-2);
 \draw (1) to (2);
 \draw (1) to [out=60,in=120,relative] node[pos=0.2]{\rotatebox{40}{\footnotesize $\bowtie$}} (2);
 \draw (2) to [out=180,in=180] (0);
 \draw (2) to [out=0,in=0] (0);
 \draw[blue] (2) to [out=-55,in=-125,relative] node[pos=0.3]{\rotatebox{-20}{\footnotesize $\bowtie$}} (0);
 \fill(0) circle (0.7mm); \fill (1) circle (0.7mm); \fill (2) circle (0.7mm);
 \node[blue] at(0,-0.6) {$\underline{L}$};
\end{tikzpicture}
&
\begin{tikzpicture}
 \coordinate (0) at (0,0);
 \coordinate (1) at (0,-1);
 \coordinate (2) at (0,-2);
 \draw (1) to (2);
 \draw (1) to [out=60,in=120,relative] node[pos=0.2]{\rotatebox{40}{\footnotesize $\bowtie$}} (2);
 \draw (2) to [out=180,in=180] (0);
 \draw (2) to [out=0,in=0] (0);
 \draw[blue] (2) to [out=45,in=135,relative] node[pos=0.15]{\rotatebox{35}{\footnotesize $\bowtie$}} (0);
 \fill(0) circle (0.7mm); \fill (1) circle (0.7mm); \fill (2) circle (0.7mm);
 \node[blue] at(0,-0.6) {$\underline{R}$};
\end{tikzpicture}
\\\hline
 (2,1,1,1) & (1,2,1,1) & (2,1,1,1) & (1,2,1,1) & (1,0,1,1) & (0,1,1,1)\\\hline
 (3,2,2,2) & (2,3,2,2) & (3,2,1,1) & (2,3,1,1) & $\times$ & $\times$
\\\hline\hline
\begin{tikzpicture}
 \coordinate (0) at (0,0);
 \coordinate (1) at (0,-1);
 \coordinate (2) at (0,-2);
 \draw (1) to (2);
 \draw (1) to [out=60,in=120,relative] node[pos=0.2]{\rotatebox{40}{\footnotesize $\bowtie$}} (2);
 \draw (2) to [out=180,in=180] (0);
 \draw (2) to [out=0,in=0] (0);
 \draw[blue] (2) to [out=-55,in=-125,relative] node[pos=0.85]{\rotatebox{35}{\footnotesize $\bowtie$}} (0);
 \fill(0) circle (0.7mm); \fill (1) circle (0.7mm); \fill (2) circle (0.7mm);
 \node[blue] at(0,-0.6) {$\overline{L}$};
\end{tikzpicture}
&
\begin{tikzpicture}
 \coordinate (0) at (0,0);
 \coordinate (1) at (0,-1);
 \coordinate (2) at (0,-2);
 \draw (1) to (2);
 \draw (1) to [out=60,in=120,relative] node[pos=0.2]{\rotatebox{40}{\footnotesize $\bowtie$}} (2);
 \draw (2) to [out=180,in=180] (0);
 \draw (2) to [out=0,in=0] (0);
 \draw[blue] (2) to [out=45,in=135,relative] node[pos=0.85]{\rotatebox{-35}{\footnotesize $\bowtie$}} (0);
 \fill(0) circle (0.7mm); \fill (1) circle (0.7mm); \fill (2) circle (0.7mm);
 \node[blue] at(0,-0.6) {$\overline{R}$};
 \node at(0,0.2) {};
\end{tikzpicture}
&
\begin{tikzpicture}
 \coordinate (0) at (0,0);
 \coordinate (1) at (0,-1);
 \coordinate (2) at (0,-2);
 \draw (1) to (2);
 \draw (1) to [out=60,in=120,relative] node[pos=0.2]{\rotatebox{40}{\footnotesize $\bowtie$}} (2);
 \draw (2) to [out=180,in=180] (0);
 \draw (2) to [out=0,in=0] (0);
 \draw[blue] (2) to [out=-55,in=-125,relative] node[pos=0.3]{\rotatebox{-20}{\footnotesize $\bowtie$}} node[pos=0.85]{\rotatebox{35}{\footnotesize $\bowtie$}} (0);
 \fill(0) circle (0.7mm); \fill (1) circle (0.7mm); \fill (2) circle (0.7mm);
 \node[blue] at(0,-0.6) {$\underline{\overline{L}}$};
\end{tikzpicture}
&
\begin{tikzpicture}
 \coordinate (0) at (0,0);
 \coordinate (1) at (0,-1);
 \coordinate (2) at (0,-2);
 \draw (1) to (2);
 \draw (1) to [out=60,in=120,relative] node[pos=0.2]{\rotatebox{40}{\footnotesize $\bowtie$}} (2);
 \draw (2) to [out=180,in=180] (0);
 \draw (2) to [out=0,in=0] (0);
 \draw[blue] (2) to [out=45,in=135,relative] node[pos=0.85]{\rotatebox{-35}{\footnotesize $\bowtie$}} node[pos=0.15]{\rotatebox{35}{\footnotesize $\bowtie$}} (0);
 \fill(0) circle (0.7mm); \fill (1) circle (0.7mm); \fill (2) circle (0.7mm);
 \node[blue] at(0,-0.6) {$\underline{\overline{R}}$};
\end{tikzpicture}
&
\begin{tikzpicture}
 \coordinate (0) at (0,0);
 \coordinate (1) at (0,-1);
 \coordinate (2) at (0,-2);
 \draw (1) to (2);
 \draw (1) to [out=60,in=120,relative] node[pos=0.2]{\rotatebox{40}{\footnotesize $\bowtie$}} (2);
 \draw (2) to [out=180,in=180] (0);
 \draw (2) to [out=0,in=0] (0);
 \draw[blue] (1) to node[pos=0.8]{\footnotesize $\bowtie$} node[right=-2,pos=0.4]{\small$\overline{P_+}$} (0);
 \fill(0) circle (0.7mm); \fill (1) circle (0.7mm); \fill (2) circle (0.7mm);
\end{tikzpicture}
&
\begin{tikzpicture}
 \coordinate (0) at (0,0);
 \coordinate (1) at (0,-1);
 \coordinate (2) at (0,-2);
 \draw (1) to (2);
 \draw (1) to [out=60,in=120,relative] node[pos=0.2]{\rotatebox{40}{\footnotesize $\bowtie$}} (2);
 \draw (2) to [out=180,in=180] (0);
 \draw (2) to [out=0,in=0] (0);
 \draw[blue] (2) to [out=60,in=120,relative] node[pos=0.25]{\rotatebox{35}{\footnotesize $\bowtie$}} (1);
 \fill(0) circle (0.7mm); \fill (1) circle (0.7mm); \fill (2) circle (0.7mm);
 \node[blue] at(-0.2,-0.8) {\small$\overline{P_-}$};
\end{tikzpicture}
\\\hline
 (1,0,0,0) & (0,1,0,0) & (2,2,1,1) & (2,2,1,1) & (1,1,0,1) & (1,1,0,2)\\\hline
 $\times$ & $\times$ & (4,4,2,2) & (4,4,2,2) & (2,2,1,2) & (2,2,0,2)\\\hline
  \end{tabular}}\vspace{5mm}
 \caption{Segments of a tagged arc in $1$-puncture pieces and the corresponding intersection sub-vectors $(a_1,a_2,a_3,a_4)$ that are values of Case($-$) (above) and of Case($\tau_1$) (below)}\label{S1}
\end{table}}

\renewcommand{\arraystretch}{1.5}
{\begin{table}[!p]
\begin{tabular}{c|c|cc}
 \multicolumn{2}{c|}{Segments} & $(a_1,a_2,a_3,a_4,a_5)$\\\hline
\multirow{3}{*}{$\cdots$
     \hspace{2mm}
\begin{tikzpicture}[baseline=0mm,scale=0.7]
 \coordinate (0) at (0,0);
 \coordinate (1) at (-0.4,0);
 \coordinate (1') at (0.4,0);
 \coordinate (d) at (0,-1);
 \coordinate (l) at (-1,0);
 \coordinate (r) at (1,0);
 \draw (0) circle (1);
 \draw[blue] (l) .. controls (-0.3,1) and (0.3,-1) .. (r);
 \fill (1) circle (1mm); \fill (1') circle (1mm); \fill (d) circle (1mm);
 \node[fill=white,inner sep=0.5] at(0,-0.5) {\color{blue}$s_{-1}$};
\end{tikzpicture}
     \hspace{2mm}
\begin{tikzpicture}[baseline=0mm,scale=0.7]
 \coordinate (0) at (0,0);
 \coordinate (1) at (-0.4,0);
 \coordinate (1') at (0.4,0);
 \coordinate (d) at (0,-1);
 \coordinate (l) at (-1,0);
 \coordinate (r) at (1,0);
 \draw (0) circle (1);
 \draw[blue] (l) .. controls (-0.3,-1) and (0.3,1) .. (r);
 \fill (1) circle (1mm); \fill (1') circle (1mm); \fill (d) circle (1mm);
 \node[fill=white,inner sep=0.5] at(0) {\color{blue}$s_{0}$};
\end{tikzpicture}
     \hspace{2mm}
\begin{tikzpicture}[baseline=0mm,scale=0.7]
 \coordinate (0) at (0,0);
 \coordinate (1) at (-0.4,0);
 \coordinate (1') at (0.4,0);
 \coordinate (d) at (0,-1);
 \coordinate (l) at (-1,0);
 \coordinate (r) at (1,0);
 \coordinate (l') at (-0.7,0);
 \coordinate (r') at (0.7,0);
 \draw (0) circle (1);
 \draw[blue] (l) .. controls (-0.5,-1) and (0.7,-0.7) .. (r');
 \draw[blue] (r') .. controls (0.6,0.5) and (0.1,0.3) .. (0);
 \draw[blue] (l') .. controls (-0.6,-0.5) and (-0.1,-0.3) .. (0);
 \draw[blue] (r) .. controls (0.5,1) and (-0.7,0.7) .. (l');
 \fill (1) circle (1mm); \fill (1') circle (1mm); \fill (d) circle (1mm);
 \node[fill=white,inner sep=0.5] at(0) {\color{blue}$s_{1}$};
\end{tikzpicture}
     \hspace{2mm}
 $\cdots$
}
& \multirow{3}{*}{$s_n$} & \multirow{3}{*}{$(2,|n+1|,|n+1|,|n|,|n|)$} &\\
& & &\\
& & &
\\\hline
 \multirow{3}{*}{$\cdots$
     \hspace{2mm}
\begin{tikzpicture}[baseline=0mm,scale=0.7]
 \coordinate (0) at (0,0);
 \coordinate (1) at (-0.4,0);
 \coordinate (1') at (0.4,0);
 \coordinate (d) at (0,-1);
 \coordinate (u) at (0,1);
 \coordinate (l') at (-0.7,0);
 \coordinate (r') at (0.7,0);
 \draw (0) circle (1);
 \draw[blue] (d) .. controls (-0.7,-0.5) and (-0.7,-0.2) .. (l');
 \draw[blue] (l') .. controls (-0.6,0.5) and (-0.1,0.3) .. (0);
 \draw[blue] (r') .. controls (0.6,-0.5) and (0.1,-0.3) .. (0);
 \draw[blue] (u) .. controls (0.7,0.5) and (0.7,0.2) .. (r');
 \fill (1) circle (1mm); \fill (1') circle (1mm); \fill (d) circle (1mm);
 \node at(0,-0.5) {\color{blue}$c_{-1}$};
\end{tikzpicture}
     \hspace{2mm}
\begin{tikzpicture}[baseline=0mm,scale=0.7]
 \coordinate (0) at (0,0);
 \coordinate (1) at (-0.4,0);
 \coordinate (1') at (0.4,0);
 \coordinate (d) at (0,-1);
 \coordinate (u) at (0,1);
 \draw (0) circle (1);
 \draw[blue] (d) to (u);
 \fill (1) circle (1mm); \fill (1') circle (1mm); \fill (d) circle (1mm);
 \node[fill=white,inner sep=0.5] at(0) {\color{blue}$c_{0}$};
\end{tikzpicture}
     \hspace{2mm}
\begin{tikzpicture}[baseline=0mm,scale=0.7]
 \coordinate (0) at (0,0);
 \coordinate (1) at (-0.4,0);
 \coordinate (1') at (0.4,0);
 \coordinate (d) at (0,-1);
 \coordinate (u) at (0,1);
 \coordinate (l') at (-0.7,0);
 \coordinate (r') at (0.7,0);
 \draw (0) circle (1);
 \draw[blue] (d) .. controls (0.7,-0.5) and (0.7,-0.2) .. (r');
 \draw[blue] (l') .. controls (-0.6,-0.5) and (-0.1,-0.3) .. (0);
 \draw[blue] (r') .. controls (0.6,0.5) and (0.1,0.3) .. (0);
 \draw[blue] (u) .. controls (-0.7,0.5) and (-0.7,0.2) .. (l');
 \fill (1) circle (1mm); \fill (1') circle (1mm); \fill (d) circle (1mm);
 \node[fill=white,inner sep=0.5] at(0) {\color{blue}$c_{1}$};
\end{tikzpicture}
     \hspace{2mm}
 $\cdots$
}
& \multirow{3}{*}{$c_n$} & $(1,0,0,0,0)$ & ($n = 0$)\\
& & $(1,n,n,n-1,n-1)$ & ($n > 0$)\\
& & $(1,-n-1,-n-1,-n,-n)$ & ($n < 0$)
\\\hline
 \multirow{3}{*}{$\cdots$
     \hspace{2mm}
\begin{tikzpicture}[baseline=0mm,scale=0.7]
 \coordinate (0) at (0,0);
 \coordinate (1) at (-0.4,0);
 \coordinate (1') at (0.4,0);
 \coordinate (d) at (0,-1);
 \coordinate (u) at (0,1);
 \coordinate (l') at (-0.7,0);
 \coordinate (r') at (0.7,0);
 \draw (0) circle (1);
 \draw[blue] (d) .. controls (-0.7,-0.5) and (-0.7,-0.2) .. node[pos=0.1]{\rotatebox{50}{\footnotesize $\bowtie$}} (l');
 \draw[blue] (l') .. controls (-0.6,0.5) and (-0.1,0.3) .. (0);
 \draw[blue] (r') .. controls (0.6,-0.5) and (0.1,-0.3) .. (0);
 \draw[blue] (u) .. controls (0.7,0.5) and (0.7,0.2) .. (r');
 \fill (1) circle (1mm); \fill (1') circle (1mm); \fill (d) circle (1mm);
 \node at(0,-0.5) {\color{blue}$\overline{c_{-1}}$};
\end{tikzpicture}
     \hspace{2mm}
\begin{tikzpicture}[baseline=0mm,scale=0.7]
 \coordinate (0) at (0,0);
 \coordinate (1) at (-0.4,0);
 \coordinate (1') at (0.4,0);
 \coordinate (d) at (0,-1);
 \coordinate (u) at (0,1);
 \draw (0) circle (1);
 \draw[blue] (d) to node[pos=0.1]{\rotatebox{0}{\footnotesize $\bowtie$}} (u);
 \fill (1) circle (1mm); \fill (1') circle (1mm); \fill (d) circle (1mm);
 \node[fill=white,inner sep=0.5] at(0) {\color{blue}$\overline{c_{0}}$};
\end{tikzpicture}
     \hspace{2mm}
\begin{tikzpicture}[baseline=0mm,scale=0.7]
 \coordinate (0) at (0,0);
 \coordinate (1) at (-0.4,0);
 \coordinate (1') at (0.4,0);
 \coordinate (d) at (0,-1);
 \coordinate (u) at (0,1);
 \coordinate (l') at (-0.7,0);
 \coordinate (r') at (0.7,0);
 \draw (0) circle (1);
 \draw[blue] (d) .. controls (0.7,-0.5) and (0.7,-0.2) .. node[pos=0.1]{\rotatebox{-50}{\footnotesize $\bowtie$}} (r');
 \draw[blue] (l') .. controls (-0.6,-0.5) and (-0.1,-0.3) .. (0);
 \draw[blue] (r') .. controls (0.6,0.5) and (0.1,0.3) .. (0);
 \draw[blue] (u) .. controls (-0.7,0.5) and (-0.7,0.2) .. (l');
 \fill (1) circle (1mm); \fill (1') circle (1mm); \fill (d) circle (1mm);
 \node[fill=white,inner sep=0.5] at(0) {\color{blue}$\overline{c_{1}}$};
\end{tikzpicture}
     \hspace{2mm}
 $\cdots$
}
& \multirow{3}{*}{$\overline{c_n}$} & $(3,1,1,1,1)$ & ($n = 0$)\\
& & $(3,n+1,n+1,n,n)$ & ($n > 0$)\\
& & $(3,-n,-n,-n+1,-n+1)$ & ($n < 0$)
\\\hline
 \multirow{3}{*}{
 $\cdots$
     \hspace{2mm}
\begin{tikzpicture}[baseline=0mm,scale=0.7]
 \coordinate (0) at (0,0);
 \coordinate (1) at (-0.4,0);
 \coordinate (1') at (0.4,0);
 \coordinate (d) at (0,-1);
 \coordinate (r) at (1,0);
 \coordinate (l') at (-0.7,0);
 \draw (0) circle (1);
 \draw[blue] (l') .. controls (-0.6,0.5) and (0.1,0.5) .. (1');
 \draw[blue] (r) .. controls (0.5,-1) and (-0.7,-0.7) .. (l');
 \fill (1) circle (1mm); \fill (1') circle (1mm); \fill (d) circle (1mm);
 \node at(0,-0.3) {\color{blue}$r_{-1}$};
\end{tikzpicture}
     \hspace{2mm}
\begin{tikzpicture}[baseline=0mm,scale=0.7]
 \coordinate (0) at (0,0);
 \coordinate (1) at (-0.4,0);
 \coordinate (1') at (0.4,0);
 \coordinate (d) at (0,-1);
 \coordinate (r) at (1,0);
 \draw (0) circle (1);
 \draw[blue] (1') to node[above=6,left]{$r_0$} (r);
 \fill (1) circle (1mm); \fill (1') circle (1mm); \fill (d) circle (1mm);
\end{tikzpicture}
     \hspace{2mm}
\begin{tikzpicture}[baseline=0mm,scale=0.7]
 \coordinate (0) at (0,0);
 \coordinate (1) at (-0.4,0);
 \coordinate (1') at (0.4,0);
 \coordinate (d) at (0,-1);
 \coordinate (r) at (1,0);
 \coordinate (l') at (-0.7,0);
 \draw (0) circle (1);
 \draw[blue] (l') .. controls (-0.6,-0.5) and (0.1,-0.5) .. (1');
 \draw[blue] (r) .. controls (0.5,1) and (-0.7,0.7) .. (l');
 \fill (1) circle (1mm); \fill (1') circle (1mm); \fill (d) circle (1mm);
 \node at(0,0.3) {\color{blue}$r_{1}$};
\end{tikzpicture}
     \hspace{2mm}
 $\cdots$
}
& \multirow{3}{*}{$r_n$} & \multirow{2}{*}{$(1,n,n,n-1,n)$} & \multirow{2}{*}{($n > 0$)}\\
& & \multirow{2}{*}{$(1,-n,-n,-n,-n+1)$} & \multirow{2}{*}{($n \le 0$)}\\
& & &
\\\hline
 \multirow{3}{*}{
 $\cdots$
     \hspace{2mm}
\begin{tikzpicture}[baseline=0mm,scale=0.7]
 \coordinate (0) at (0,0);
 \coordinate (1) at (-0.4,0);
 \coordinate (1') at (0.4,0);
 \coordinate (d) at (0,-1);
 \coordinate (l) at (-1,0);
 \coordinate (r') at (0.7,0);
 \draw (0) circle (1);
 \draw[blue] (r') .. controls (0.6,-0.5) and (-0.1,-0.5) .. (1);
 \draw[blue] (l) .. controls (-0.5,1) and (0.7,0.7) .. (r');
 \fill (1) circle (1mm); \fill (1') circle (1mm); \fill (d) circle (1mm);
 \node at(0,0.3) {\color{blue}$l_{-1}$};
\end{tikzpicture}
     \hspace{2mm}
\begin{tikzpicture}[baseline=0mm,scale=0.7]
 \coordinate (0) at (0,0);
 \coordinate (1) at (-0.4,0);
 \coordinate (1') at (0.4,0);
 \coordinate (d) at (0,-1);
 \coordinate (l) at (-1,0);
 \draw (0) circle (1);
 \draw[blue] (1) to node[above=6,right]{$l_0$} (l);
 \fill (1) circle (1mm); \fill (1') circle (1mm); \fill (d) circle (1mm);
\end{tikzpicture}
     \hspace{2mm}
\begin{tikzpicture}[baseline=0mm,scale=0.7]
 \coordinate (0) at (0,0);
 \coordinate (1) at (-0.4,0);
 \coordinate (1') at (0.4,0);
 \coordinate (d) at (0,-1);
 \coordinate (l) at (-1,0);
 \coordinate (r') at (0.7,0);
 \draw (0) circle (1);
 \draw[blue] (r') .. controls (0.6,0.5) and (-0.1,0.5) .. (1);
 \draw[blue] (l) .. controls (-0.5,-1) and (0.7,-0.7) .. (r');
 \fill (1) circle (1mm); \fill (1') circle (1mm); \fill (d) circle (1mm);
 \node at(0,-0.3) {\color{blue}$l_{1}$};
\end{tikzpicture}
     \hspace{2mm}
 $\cdots$
}
& \multirow{3}{*}{$l_n$} & \multirow{2}{*}{$(1,n,n+1,n,n)$} & \multirow{2}{*}{($n \ge 0$)}\\
& & \multirow{2}{*}{$(1,-n-1,-n,-n,-n)$} & \multirow{2}{*}{($n < 0$)}\\
& & &
\\\hline
 \multirow{3}{*}{
 $\cdots$
     \hspace{2mm}
\begin{tikzpicture}[baseline=0mm,scale=0.7]
 \coordinate (0) at (0,0);
 \coordinate (1) at (-0.4,0);
 \coordinate (1') at (0.4,0);
 \coordinate (d) at (0,-1);
 \coordinate (l') at (-0.7,0);
 \draw (0) circle (1);
 \draw[blue] (d) .. controls (-0.7,-0.5) and (-0.7,-0.2) .. (l');
 \draw[blue] (l') .. controls (-0.6,0.5) and (0.1,0.5) .. (1');
 \fill (1) circle (1mm); \fill (1') circle (1mm); \fill (d) circle (1mm);
 \node at(0,-0.4) {\small\color{blue}$R_{-1}$};
\end{tikzpicture}
     \hspace{2mm}
$\Biggl($
\begin{tikzpicture}[baseline=-2mm,scale=0.5]
 \coordinate (0) at (0,0);
 \coordinate (1) at (-0.4,0);
 \coordinate (1') at (0.4,0);
 \coordinate (d) at (0,-1);
 \draw (0) circle (1);
 \draw[blue] (d) to (1');
 \fill (1) circle (1mm); \fill (1') circle (1mm); \fill (d) circle (1mm);
 \node[blue] at(0,-1.5) {\small$R_{0}$};
\end{tikzpicture}
$\Biggr)$
     \hspace{2mm}
\begin{tikzpicture}[baseline=0mm,scale=0.7]
 \coordinate (0) at (0,0);
 \coordinate (1) at (-0.4,0);
 \coordinate (1') at (0.4,0);
 \coordinate (d) at (0,-1);
 \coordinate (l') at (-0.7,0);
 \coordinate (r') at (0.7,0);
 \draw (0) circle (1);
 \draw[blue] (d) .. controls (0.7,-0.5) and (0.7,-0.2) .. (r');
 \draw[blue] (l') .. controls (-0.7,0.7) and (0.7,0.7) .. (r');
 \draw[blue] (l') .. controls (-0.6,-0.5) and (0.1,-0.5) .. (1');
 \fill (1) circle (1mm); \fill (1') circle (1mm); \fill (d) circle (1mm);
 \node[fill=white,inner sep=0.5] at(0,0.1) {\small\color{blue}$R_{1}$};
\end{tikzpicture}
     \hspace{2mm}
 $\cdots$
}
& \multirow{3}{*}{$R_n$} & \multirow{2}{*}{$(0,n,n,n-1,n)$} & \multirow{2}{*}{($n > 0$)}\\
& & \multirow{2}{*}{$(0,-n-1,-n-1,-n-1,-n)$} & \multirow{2}{*}{($n < 0$)}\\
& & &
\\\hline
 \multirow{3}{*}{
 $\cdots$
     \hspace{2mm}
\begin{tikzpicture}[baseline=0mm,scale=0.7]
 \coordinate (0) at (0,0);
 \coordinate (1) at (-0.4,0);
 \coordinate (1') at (0.4,0);
 \coordinate (d) at (0,-1);
 \coordinate (l') at (-0.7,0);
 \coordinate (r') at (0.7,0);
 \draw (0) circle (1);
 \draw[blue] (d) .. controls (-0.7,-0.5) and (-0.7,-0.2) .. (l');
 \draw[blue] (r') .. controls (0.7,0.7) and (-0.7,0.7) .. (l');
 \draw[blue] (r') .. controls (0.6,-0.5) and (-0.1,-0.5) .. (1);
 \fill (1) circle (1mm); \fill (1') circle (1mm); \fill (d) circle (1mm);
 \node[fill=white,inner sep=0.5] at(0,0.5) {\small\color{blue}$L_{-1}$};
\end{tikzpicture}
     \hspace{2mm}
$\Biggl($
\begin{tikzpicture}[baseline=-2mm,scale=0.5]
 \coordinate (0) at (0,0);
 \coordinate (1) at (-0.4,0);
 \coordinate (1') at (0.4,0);
 \coordinate (d) at (0,-1);
 \draw (0) circle (1);
 \draw[blue] (d) to (1);
 \fill (1) circle (1mm); \fill (1') circle (1mm); \fill (d) circle (1mm);
 \node[blue] at(0,-1.5) {\small$L_{0}$};
\end{tikzpicture}
$\Biggl)$
     \hspace{2mm}
\begin{tikzpicture}[baseline=0mm,scale=0.7]
 \coordinate (0) at (0,0);
 \coordinate (1) at (-0.4,0);
 \coordinate (1') at (0.4,0);
 \coordinate (d) at (0,-1);
 \coordinate (r') at (0.7,0);
 \draw (0) circle (1);
 \draw[blue] (d) .. controls (0.7,-0.5) and (0.7,-0.2) .. (r');
 \draw[blue] (r') .. controls (0.6,0.5) and (-0.1,0.5) .. (1);
 \fill (1) circle (1mm); \fill (1') circle (1mm); \fill (d) circle (1mm);
 \node[fill=white,inner sep=0.5] at(0,-0.4) {\small\color{blue}$L_{1}$};
\end{tikzpicture}
     \hspace{2mm}
 $\cdots$
}
& \multirow{3}{*}{$L_n$} & \multirow{2}{*}{$(0,n-1,n,n-1,n-1)$} & \multirow{2}{*}{($n > 0$)}\\
& & \multirow{2}{*}{$(0,-n-1,-n,-n,-n)$} & \multirow{2}{*}{($n < 0$)}\\
& & &
\\\hline
 \multirow{3}{*}{
 $\cdots$
     \hspace{2mm}
\begin{tikzpicture}[baseline=0mm,scale=0.7]
 \coordinate (0) at (0,0);
 \coordinate (1) at (-0.4,0);
 \coordinate (1') at (0.4,0);
 \coordinate (d) at (0,-1);
 \coordinate (l') at (-0.7,0);
 \draw (0) circle (1);
 \draw[blue] (d) .. controls (-0.7,-0.5) and (-0.7,-0.2) .. node[pos=0.1]{\rotatebox{50}{\footnotesize $\bowtie$}} (l');
 \draw[blue] (l') .. controls (-0.6,0.5) and (0.1,0.5) .. (1');
 \fill (1) circle (1mm); \fill (1') circle (1mm); \fill (d) circle (1mm);
 \node at(0,-0.4) {\small\color{blue}$\overline{R_{-1}}$};
\end{tikzpicture}
     \hspace{2mm}
\begin{tikzpicture}[baseline=0mm,scale=0.7]
 \coordinate (0) at (0,0);
 \coordinate (1) at (-0.4,0);
 \coordinate (1') at (0.4,0);
 \coordinate (d) at (0,-1);
 \draw (0) circle (1);
 \draw[blue] (d) to  node[pos=0.2]{\rotatebox{-25}{\footnotesize $\bowtie$}} (1');
 \fill (1) circle (1mm); \fill (1') circle (1mm); \fill (d) circle (1mm);
 \node at(0) {\small\color{blue}$\overline{R_{0}}$};
\end{tikzpicture}
     \hspace{2mm}
\begin{tikzpicture}[baseline=0mm,scale=0.7]
 \coordinate (0) at (0,0);
 \coordinate (1) at (-0.4,0);
 \coordinate (1') at (0.4,0);
 \coordinate (d) at (0,-1);
 \coordinate (l') at (-0.7,0);
 \coordinate (r') at (0.7,0);
 \draw (0) circle (1);
 \draw[blue] (d) .. controls (0.7,-0.5) and (0.7,-0.2) .. node[pos=0.1]{\rotatebox{-50}{\footnotesize $\bowtie$}} (r');
 \draw[blue] (l') .. controls (-0.7,0.7) and (0.7,0.7) .. (r');
 \draw[blue] (l') .. controls (-0.6,-0.5) and (0.1,-0.5) .. (1');
 \fill (1) circle (1mm); \fill (1') circle (1mm); \fill (d) circle (1mm);
 \node at(0,0.1) {\small\color{blue}$\overline{R_{1}}$};
\end{tikzpicture}
     \hspace{2mm}
 $\cdots$
}
& \multirow{3}{*}{$\overline{R_n}$} & $(2,1,1,0,2)$ & ($n = 0$)\\
& & $(2,n+1,n+1,n,n+1)$ & ($n > 0$)\\
& & $(2,-n,-n,-n,-n+1)$ & ($n < 0$)
\\\hline
 \multirow{3}{*}{
 $\cdots$
     \hspace{2mm}
\begin{tikzpicture}[baseline=0mm,scale=0.7]
 \coordinate (0) at (0,0);
 \coordinate (1) at (-0.4,0);
 \coordinate (1') at (0.4,0);
 \coordinate (d) at (0,-1);
 \coordinate (l') at (-0.7,0);
 \coordinate (r') at (0.7,0);
 \draw (0) circle (1);
 \draw[blue] (d) .. controls (-0.7,-0.5) and (-0.7,-0.2) .. node[pos=0.1]{\rotatebox{50}{\footnotesize $\bowtie$}} (l');
 \draw[blue] (r') .. controls (0.7,0.7) and (-0.7,0.7) .. (l');
 \draw[blue] (r') .. controls (0.6,-0.5) and (-0.1,-0.5) .. (1);
 \fill (1) circle (1mm); \fill (1') circle (1mm); \fill (d) circle (1mm);
 \node[fill=white,inner sep=0.5] at(0,0.5) {\small\color{blue}$\overline{L_{-1}}$};
\end{tikzpicture}
     \hspace{2mm}
\begin{tikzpicture}[baseline=0mm,scale=0.7]
 \coordinate (0) at (0,0);
 \coordinate (1) at (-0.4,0);
 \coordinate (1') at (0.4,0);
 \coordinate (d) at (0,-1);
 \draw (0) circle (1);
 \draw[blue] (d) to node[pos=0.2]{\rotatebox{25}{\footnotesize $\bowtie$}} (1);
 \fill (1) circle (1mm); \fill (1') circle (1mm); \fill (d) circle (1mm);
 \node at(0) {\small\color{blue}$\overline{L_{0}}$};
\end{tikzpicture}
     \hspace{2mm}
\begin{tikzpicture}[baseline=0mm,scale=0.7]
 \coordinate (0) at (0,0);
 \coordinate (1) at (-0.4,0);
 \coordinate (1') at (0.4,0);
 \coordinate (d) at (0,-1);
 \coordinate (r') at (0.7,0);
 \draw (0) circle (1);
 \draw[blue] (d) .. controls (0.7,-0.5) and (0.7,-0.2) .. node[pos=0.1]{\rotatebox{-50}{\footnotesize $\bowtie$}} (r');
 \draw[blue] (r') .. controls (0.6,0.5) and (-0.1,0.5) .. (1);
 \fill (1) circle (1mm); \fill (1') circle (1mm); \fill (d) circle (1mm);
 \node[fill=white,inner sep=0.5] at(0,-0.4) {\small\color{blue}$\overline{L_{1}}$};
\end{tikzpicture}
     \hspace{2mm}
 $\cdots$
}
& \multirow{3}{*}{$\overline{L_n}$} & $(2,0,2,1,1)$ & ($n = 0$)\\
& & $(2,n,n+1,n,n)$ & ($n > 0$)\\
& & $(2,-n,-n+1,-n+1,-n+1)$ & ($n < 0$)
\\\hline
\end{tabular}
\vspace{1mm}
\[
\begin{tikzpicture}[baseline=0mm,scale=0.7]
 \coordinate (0) at (0,0);
 \coordinate (1) at (-0.4,0);
 \coordinate (1') at (0.4,0);
 \coordinate (d) at (0,-1);
 \coordinate (l) at (-0.9,-0.4);
 \coordinate (r) at (0.9,-0.4);
 \draw (0) circle (1);
 \draw[blue] (l) to node[fill=white,inner sep=0.5]{\color{blue}$h$} (r);
 \fill (1) circle (1mm); \fill (1') circle (1mm); \fill (d) circle (1mm);
\end{tikzpicture}
   \hspace{2mm} (2,1,1,1,1) \hspace{7mm}
\begin{tikzpicture}[baseline=0mm,scale=0.7]
 \coordinate (0) at (0,0);
 \coordinate (1) at (-0.4,0);
 \coordinate (1') at (0.4,0);
 \coordinate (d) at (0,-1);
 \draw (0) circle (1);
 \draw[blue] (1') to node[above]{\color{blue}$H$} (1);
 \fill (1) circle (1mm); \fill (1') circle (1mm); \fill (d) circle (1mm);
\end{tikzpicture}
   \hspace{2mm} (0,0,1,0,1) \hspace{7mm}
\begin{tikzpicture}[baseline=0mm,scale=0.7]
 \coordinate (0) at (0,0);
 \coordinate (1) at (-0.4,0);
 \coordinate (1') at (0.4,0);
 \coordinate (d) at (0,-1);
 \draw (0) circle (1);
 \draw[blue] (d)  .. controls (-1,-0.5) and (-1,0.5) .. node[pos=0.1]{\rotatebox{50}{\footnotesize $\bowtie$}} (0,0.5);
 \draw[blue] (d)  .. controls (1,-0.5) and (1,0.5) .. node[pos=0.1]{\rotatebox{-50}{\footnotesize $\bowtie$}} (0,0.5);
 \fill (1) circle (1mm); \fill (1') circle (1mm); \fill (d) circle (1mm);
 \node at(0,0.1) {\color{blue}$Q$};
\end{tikzpicture}
   \hspace{2mm} (4,2,2,2,2)
\]\vspace{5mm}
 \caption{Segments of a tagged arc in $2$-puncture pieces and the corresponding intersection sub-vectors $(a_1,a_2,a_3,a_4,a_5)$, where $s_n$ (resp., $c_n$, $\overline{c_n}$) is obtained from $s_0$ (resp., $c_0$, $\overline{c_0}$) by moving its endpoints along the boundary clockwise in angle $\pi$ and the other cases are in angle $2\pi$}\label{S2}
\end{table}}
\newpage

\renewcommand{\arraystretch}{1.6}
{\begin{table}[!p]
\begin{tabular}{c|c|c}
 \multicolumn{3}{c}{Case($-$)}\\\hline
 Sets of segments & Sets of modified segments & $(a_1,a_2,a_3)$\\\hline\hline
 \multicolumn{2}{c|}{$\{m_1e_1,m_2e_2,m_3e_3\}$} & $(m_2+m_3, m_3+m_1, m_1+m_2)$\\\hline
 \multicolumn{2}{c|}{$\{h_1,m_2e_2,m_3e_3\}$} & $(1+m_2+m_3, m_3, m_2)$\\\hline
 \multicolumn{2}{c|}{$\{2h_1,m_2e_2,m_3e_3\}$} & $(2+m_2+m_3, m_3, m_2)$\\\hline
 $\{\overline{h_1},m_2e_2,m_3e_3\}$ & $\{y,m_2e_2,m_3e_3\}$ & $(1+m_2+m_3, 1+m_3, 1+m_2)$\\\hline
 $\{\overline{h_1},e_1,m_2e_2,m_3e_3\}$ & $\{y,e_1,m_2e_2,m_3e_3\}$ & $(1+m_2+m_3, 2+m_3, 2+m_2)$\\\hline
 $\{2\overline{h_1},m_2e_2,m_3e_3\}$ & $\{e_1,(m_2+1)e_2,(m_3+1)e_3\}$ & \multirow{7}{*}{appear in above}\\\cline{1-2}
 $\{\overline{h_2},k_1e_1,k_2e_2,k_3e_3\}$ & \multirow{2}{*}{$\{y,k_1e_1,k_2e_2,k_3e_3\}$}\\\cline{1-1}
 $\{\overline{h_3},k_1e_1,k_2e_2,k_3e_3\}$ & &\\\cline{1-2}
 $\{2\overline{h_2},m_3e_3\}$ & $\{e_1,e_2,(1+m_3)e_3\}$\\\cline{1-2}
 $\{2\overline{h_3},m_2e_2\}$ & $\{e_1,(1+m_2)e_2,e_3\}$\\\cline{1-2}
 $\{\overline{E_{3r}}\}$, $\{\overline{E_{2l}}\}$ & $\{h_1\}$\\\cline{1-2}
 $\{\overline{\overline{E_1}}\}$ & $\{e_2,e_3\}$\\\hline
\end{tabular}\\\vspace{7mm}
 Case($\tau_i$) and Case($\tau_i$,$\tau_j$) come down to Case($-$) as follows:\\
\vspace{2mm}
\begin{tabular}{c|c|c|c}
 \multicolumn{4}{c}{Case($\tau_i$) and not appear in Case($-$)}\\\hline
 i & Sets of segments & Sets of modified segments & $(a_1,a_2,a_3)$\\\hline\hline
 1 & $\{\overline{E_{2l}},e_2\}$ & $\{h_1,e_2\}$ & \multirow{6}{*}{appear in above}\\\cline{1-3}
 1 & $\{\overline{E_{3r}},e_3\}$ & $\{h_1,e_3\}$\\\cline{1-3}
 1 & $\{\overline{\overline{E_1}},e_2,e_3\}$ & $\{2e_2,2e_3\}$\\\cline{1-3}
 2, 3 & $\{\overline{\overline{E_1}},e_1\}$ & \multirow{3}{*}{$\{e_1,e_2,e_3\}$}\\\cline{1-2}
 1, 3 & $\{\overline{\overline{E_2}},e_2\}$ &\\\cline{1-2}
 1, 2 & $\{\overline{\overline{E_3}},e_3\}$ &\\\hline
\end{tabular}\\\vspace{3mm}
\begin{tabular}{c|c|c}
 \multicolumn{3}{c}{Case($\tau_i$,$\tau_j$) and not appear in others}\\\hline
 Sets of segments & Sets of modified segments & $(a_1,a_2,a_3)$\\\hline\hline
 $\{\overline{\overline{E_k}},e_1,e_2,e_3\}$ & $\{2e_1,2e_2,2e_3\}$ & appear in above\\\hline
\end{tabular}\vspace{5mm}
 \caption{Sets of segments and the corresponding sets of modified segments in a triangle piece for the case of $a_1 \ge a_2, a_3$, where $m_i \in \bZ_{\ge 0}$ and $k_i \in \{0,1\}$ such that $k_1 \le k_2, k_3$}\label{ss0}
\end{table}}

\renewcommand{\arraystretch}{1.5}
{\begin{table}[!p]
\scalebox{0.99}{\begin{tabular}{c|c|c}
 \multicolumn{3}{c}{Case($-$)}\\\hline
 Sets of segments & Sets of modified segments & $(a_1,a_2,a_3,a_4)$\\\hline\hline
 \multicolumn{2}{c|}{$\{m_1r,m_2u,m_3d\}$} & $(m_2+m_3,2m_1+m_2+m_3,m_1+m_3,m_1+m_3)$\\\hline
 \multicolumn{2}{c|}{$\{r_+,m_1r,m_3d\}$} & $(m_3,1+2m_1+m_3,1+m_1+m_3,1+m_1+m_3)$\\\hline
 \multicolumn{2}{c|}{$\{2r_+,m_1r,m_3d\}$} & $(m_3,2+2m_1+m_3,2+m_1+m_3,2+m_1+m_3)$\\\hline
 \multicolumn{2}{c|}{$\{r_-,m_1r,m_2u\}$} & $(m_2,1+2m_1+m_2,m_1,m_1)$\\\hline
 \multicolumn{2}{c|}{$\{2r_-,m_1r,m_2u\}$} & $(m_2,2+2m_1+m_2,m_1,m_1)$\\\hline

 $\{\overline{r_+},m_1r,m_3d\}$ & $\{y_r,m_1r,m_3d\}$ & $(1+m_3,2+2m_1+m_3,1+m_1+m_3,1+m_1+m_3)$\\\hline
 $\{\overline{r_+},u,m_1r,m_3d\}$ & $\{y_r,u,m_1r,m_3d\}$ & $(2+m_3,3+2m_1+m_3,1+m_1+m_3,1+m_1+m_3)$\\\hline

 $\{\overline{r_-},m_1r,m_2u\}$ & $\{y_r,m_1r,m_2u\}$ & $(1+m_2,2+2m_1+m_2,1+m_1,1+m_1)$\\\hline
 $\{\overline{r_-},d,m_1r,m_2u\}$ & $\{y_r,d,m_1r,m_2u\}$ & $(2+m_2,3+2m_1+m_2,2+m_1,2+m_1)$\\\hline

 \multicolumn{2}{c|}{$\{r_p,m_1r,m_2u,m_3d\}$} & $(m_2+m_3,1+2m_1+m_2+m_3,m_1+m_3,1+m_1+m_3)$\\\hline
 \multicolumn{2}{c|}{$\{r_p,r_+,m_1r,m_3d\}$} & $(m_3,2+2m_1+m_3,1+m_1+m_3,2+m_1+m_3)$\\\hline
 \multicolumn{2}{c|}{$\{r_p,r_-,m_1r,m_2u\}$} & $(m_2,2+2m_1+m_2,m_1,1+m_1)$\\\hline

 $\{r_p,\overline{r_+},m_1r,m_3d\}$ & $\{r_p,y_r,m_1r,m_3d\}$ & $(1+m_3,3+2m_1+m_3,1+m_1+m_3,2+m_1+m_3)$\\\hline
 $\{r_p,\overline{r_+},u,m_1r,m_3d\}$ & $\{r_p,y_r,u,m_1r,m_3d\}$ & $(2+m_3,4+2m_1+m_3,1+m_1+m_3,2+m_1+m_3)$\\\hline
 $\{r_p,\overline{r_-},m_1r,m_2u\}$ & $\{r_p,y_r,m_1r,m_2u\}$ & $(1+m_2,3+2m_1+m_2,1+m_1,2+m_1)$\\\hline
 $\{r_p,\overline{r_-},d,m_1r,m_2u\}$ & $\{r_p,y_r,d,m_1r,m_2u\}$ & $(2+m_2,4+2m_1+m_2,2+m_1,3+m_1)$\\\hline

 \multicolumn{2}{c|}{$\{P_+\}$} & $(0,0,0,1)$\\\hline
 \multicolumn{2}{c|}{$\{\overline{P_+}\}$} & $(1,1,0,1)$\\\hline

 \multicolumn{2}{c|}{$\{2r_p,m_1r,m_2u,m_3d\}$} & $(m_2+m_3,2+2m_1+m_2+m_3,m_1+m_3,2+m_1+m_3)$\\\hline
 \multicolumn{2}{c|}{$\{r_p,l_p,m_2u,m_3d\}$} & $(1+m_2+m_3,1+m_2+m_3,m_3,2+m_3)$\\\hline

 $\{2\overline{r_+},m_1r,m_3d\}$ & $\{(m_1+2)r,2u,(m_3+2)d\}$ & \multirow{6}{*}{appear in above}\\\cline{1-2}
 $\{2\overline{r_-},m_1r,m_2u\}$ & $\{(m_1+2)r,(m_2+2)u,2d\}$ &\\\cline{1-2}
 $\{\overline{P_-}\}$ & $\{r_p,l_p\}$ &\\\cline{1-2}
 $\{\underline{R}\}$ & $\{r_+\}$ &\\\cline{1-2}
 $\{\overline{R}\}$ & $\{r_-\}$ &\\\cline{1-2}
 $\{\overline{\overline{R}}\}$, $\{\overline{\overline{L}}\}$ & $\{u,d\}$ &\\\hline

\multicolumn{3}{c}{}\\
 \multicolumn{3}{c}{Case($\tau_i$) and not appear in Case($-$)}\\\hline\hline
 \multicolumn{2}{c|}{$\{\overline{P_+},d\}$} & $(2,2,1,2)$\\\hline
 $\{\overline{P_-},u\}$ & $\{r_p,l_p,u\}$ & \multirow{2}{*}{appear in above}\\\cline{1-2}
 $\{\overline{\overline{R}},u,d\}$, $\{\overline{\overline{L}},u,d\}$ & $\{2u,2d\}$ &\\\hline
\end{tabular}}\vspace{5mm}
 \caption{Sets of segments and the corresponding sets of modified segments in a $1$-puncture piece for the case of $a_1 \le a_2$, where $m_i \in \bZ_{\ge 0}$}\label{ss1}
\end{table}}

\renewcommand{\arraystretch}{1.4}
{\begin{table}[ht]
 \caption{Sets of segments and the corresponding sets of modified segments in a $2$-puncture piece for the case of $a_3-a_2 \le a_5-a_4$, where $m_i \in \bZ_{\ge 0}$ and the notation $(\uparrow)$ (resp., $(\leftarrow)$) means that it is equal to the polynomial just above (resp., left)}\label{ss2}
\scalebox{0.88}{\begin{tabular}{l}
 Sets of segments = Sets of modified segments\\\hline
 $(a_1,a_2,a_3,a_4,a_5)$\\\hline\hline

 $\{c_n,m_1s_{n-1},m_2s_n\}$\\\hline
$\begin{array}{ll}
 (1+2(m_1+m_2),m_2,m_2,m_1,m_1) & (n=0)\\
 ((\uparrow),n+m_1(n)+m_2(n+1),(\leftarrow),n-1+m_1(n-1)+m_2(n),(\leftarrow)) & (n>0)\\
 ((\uparrow),-n-1+m_1(-n)+m_2(-n-1),(\leftarrow),-n+m_1(-n+1)+m_2(-n),(\leftarrow)) & (n<0)
\end{array}$\\\hline\hline

 $\{\overline{c_n},m_1s_{n-1},m_2s_n\}$\\\hline
$\begin{array}{ll}
 (3+2(m_1+m_2),1+m_2,1+m_2,1+m_1,1+m_1) & (n=0)\\
 ((\uparrow),n+1+m_1(n)+m_2(n+1),(\leftarrow),n+m_1(n-1)+m_2(n),(\leftarrow)) & (n>0)\\
 ((\uparrow),-n+m_1(-n)+m_2(-n-1),(\leftarrow),-n+1+m_1(-n+1)+m_2(-n),(\leftarrow)) & (n<0)
\end{array}$\\\hline\hline

 $\{\overline{c_n},m_1s_{n-1},m_2s_n,h\}$\\\hline
$\begin{array}{ll}
 (5+2(m_1+m_2),2+m_2,(\leftarrow),2+m_1,(\leftarrow)) & (n=0)\\
 ((\uparrow),n+2+m_1(n)+m_2(n+1),(\leftarrow),n+1+m_1(n-1)+m_2(n),(\leftarrow)) & (n>0)\\
 ((\uparrow),-n+1+m_1(-n)+m_2(-n-1),(\leftarrow),-n+2+m_1(-n+1)+m_2(-n),(\leftarrow)) & (n<0)
\end{array}$\\\hline\hline

 $\{r_n,m_1s_{2n-2},m_2s_{2n-1},m_3h\}$ ($m_1 \neq 0$)\\\hline
$\begin{array}{ll}
 (1+2(m_1+m_2+m_3),m_1+m_3,(\leftarrow),2m_1+m_2+m_3,1+2m_1+m_2+m_3) & (n=0)\\
 ((\uparrow),n+m_1(2n-1)+m_2(2n)+m_3,(\leftarrow),n-1+m_1(2n-2)+m_2(2n-1)+m_3,n+(\leftarrow)) & (n>0)\\
 ((\uparrow),-n+m_1(-2n+1)+m_2(-2n)+m_3,(\leftarrow),-n+m_1(-2n+2)+m_2(-2n+1)+m_3,-n+1+(\leftarrow)) & (n<0)
\end{array}$\\\hline\hline

 $\{r_n,m_1s_{2n-1},m_2s_{2n},m_3h\}$\\\hline
$\begin{array}{ll}
 (1+2(m_1+m_2+m_3),m_2+m_3,(\leftarrow),m_1+m_3,1+m_1+m_3) & (n=0)\\
 ((\uparrow),n+m_1(2n)+m_2(2n+1)+m_3,(\leftarrow),n-1+m_1(2n-1)+m_2(2n)+m_3,n+(\leftarrow)) & (n>0)\\
 ((\uparrow),-n+m_1(-2n)+m_2(-2n-1)+m_3,(\leftarrow),-n+m_1(-2n+1)+m_2(-2n)+m_3,-n+1+(\leftarrow)) & (n<0)
\end{array}$\\\hline\hline

 $\{2r_n,m_1s_{2n-2},m_2s_{2n-1},m_3h\}$ ($m_1 \neq 0$)\\\hline
$\begin{array}{ll}
 (2+2(m_1+m_2+m_3),m_1+m_3,(\leftarrow),2m_1+m_2+m_3,2+2m_1+m_2+m_3) & (n=0)\\
 ((\uparrow),2n+m_1(2n-1)+m_2(2n)+m_3,(\leftarrow),2n-2+m_1(2n-2)+m_2(2n-1)+m_3,2n+(\leftarrow)) & (n>0)\\
 ((\uparrow),-2n+m_1(-2n+1)+m_2(-2n)+m_3,(\leftarrow),-2n+m_1(-2n+2)+m_2(-2n+1)+m_3,-2n+2+(\leftarrow)) & (n<0)
\end{array}$\\\hline\hline

 $\{2r_n,m_1s_{2n-1},m_2s_{2n},m_3h\}$\\\hline
$\begin{array}{ll}
 (2+2(m_1+m_2+m_3),m_2+m_3,(\leftarrow),m_1+m_3,2+m_1+m_3) & (n=0)\\
 ((\uparrow),2n+m_1(2n)+m_2(2n+1)+m_3,(\leftarrow),2n-2+m_1(2n-1)+m_2(2n)+m_3,2n+(\leftarrow)) & (n>0)\\
 ((\uparrow),-2n+m_1(-2n)+m_2(-2n-1)+m_3,(\leftarrow),-2n+m_1(-2n+1)+m_2(-2n)+m_3,-2n+2+(\leftarrow)) & (n<0)
\end{array}$\\\hline\hline

 $\{r_n,r_{n+1},m_1s_{2n},m_3h\}$\\\hline
$\begin{array}{ll}
 (2+2(m_1+m_3),1+m_1+m_3,(\leftarrow),m_3,2+m_3) & (n=0)\\
 ((\uparrow),1+m_1+m_3,(\leftarrow),1+2m_1+m_3,3+2m_1+m_3) & (n=-1)\\
 ((\uparrow),2n+1+m_1(2n+1)+m_3,(\leftarrow),2n-1+m_1(2n)+m_3,2n+1+(\leftarrow)) & (n>0)\\
 ((\uparrow),-2n-1+m_1(-2n-1)+m_3,(\leftarrow),-2n-1+m_1(-2n)+m_3,-2n+1+(\leftarrow)) & (n<-1)
\end{array}$\\\hline\hline

 $\{\overline{R_n}\}$ ($n \neq 0$)\\\hline
$\begin{array}{ll}
 (2,n+1,n+1,n,n+1) & (n>0)\\
 (2,-n,-n,-n,-n+1) & (n<0)
\end{array}$\\\hline\hline
\end{tabular}}
\end{table}}
\renewcommand{\arraystretch}{1.4}
{\begin{table}[ht]
\scalebox{0.85}{
\begin{tabular}{l}\hline\hline
 $\{r_n,c_{2n-1},m_1s_{2n-2},m_2s_{2n-1}\}$\\\hline
$\begin{array}{ll}
 (2+2(m_1+m_2),m_1,(\leftarrow),1+2m_1+m_2,2+2m_1+m_2) & (n=0)\\
 ((\uparrow),3n-1+m_1(2n-1)+m_2(2n),(\leftarrow),3n-3+m_1(2n-2)+m_2(2n-1),3n-2+(\leftarrow)) & (n>0)\\
 ((\uparrow),-3n+m_1(-2n+1)+m_2(-2n),(\leftarrow),-3n+1+m_1(-2n+2)+m_2(-2n+1),-3n+2+(\leftarrow)) & (n<0)
\end{array}$\\\hline\hline

 $\{r_n,c_{2n},m_1s_{2n-1},m_2s_{2n}\}$\\\hline
$\begin{array}{ll}
 (2+2(m_1+m_2),m_2,(\leftarrow),m_1,1+m_1) & (n=0)\\
 ((\uparrow),3n+m_1(2n)+m_2(2n+1),(\leftarrow),3n-2+m_1(2n-1)+m_2(2n),3n-1+(\leftarrow)) & (n>0)\\
 ((\uparrow),-3n-1+m_1(-2n)+m_2(-2n-1),(\leftarrow),-3n+m_1(-2n+1)+m_2(-2n),-3n+1+(\leftarrow)) & (n<0)
\end{array}$\\\hline\hline

 $\{r_n,\overline{c_{2n-1}},m_1s_{2n-2},m_2s_{2n-1}\}$\\\hline
$\begin{array}{ll}
 (4+2(m_1+m_2),1+m_1,(\leftarrow),2+2m_1+m_2,3+2m_1+m_2) & (n=0)\\
 ((\uparrow),3n+m_1(2n-1)+m_2(2n),(\leftarrow),3n-2+m_1(2n-2)+m_2(2n-1),3n-1+(\leftarrow)) & (n>0)\\
 ((\uparrow),-3n+1+m_1(-2n+1)+m_2(-2n),(\leftarrow),-3n+2+m_1(-2n+2)+m_2(-2n+1),-3n+3+(\leftarrow)) & (n<0)
\end{array}$\\\hline\hline

 $\{r_n,\overline{c_{2n}},m_1s_{2n-1},m_2s_{2n}\}$\\\hline
$\begin{array}{ll}
 (4+2(m_1+m_2),1+m_2,(\leftarrow),1+m_1,2+m_1) & (n=0)\\
 ((\uparrow),3n+1+m_1(2n)+m_2(2n+1),(\leftarrow),3n-1+m_1(2n-1)+m_2(2n),3n+(\leftarrow)) & (n>0)\\
 ((\uparrow),-3n+m_1(-2n)+m_2(-2n-1),(\leftarrow),-3n+1+m_1(-2n+1)+m_2(-2n),-3n+2+(\leftarrow)) & (n<0)
\end{array}$\\\hline\hline

 $\{r_n,l_{n-1},m_1s_{2n-2},m_2s_{2n-1},m_3h\}$\\\hline
$\small\begin{array}{ll}
 (2+2(m_1+m_2+m_3),m_1+m_3,1+m_1+m_3,1+2m_1+m_2+m_3,2+2m_1+m_2+m_3) & (n=0)\\
 ((\uparrow),2n-1+m_1(2n-1)+m_2(2n)+m_3,2n+(\leftarrow),2n-2+m_1(2n-2)+m_2(2n-1)+m_3,2n-1+(\leftarrow)) & (n>0)\\
 ((\uparrow),-2n+m_1(-2n+1)+m_2(-2n)+m_3,-2n+1+(\leftarrow),-2n+1+m_1(-2n+2)+m_2(-2n+1)+m_3,-2n+2+(\leftarrow)) & (n<0)
\end{array}$\\\hline\hline

 $\{r_n,l_n,m_1s_{2n-1},m_2s_{2n},m_3h\}$\\\hline
$\begin{array}{ll}
 (2+2(m_1+m_2+m_3),m_2+m_3,1+m_2+m_3,m_1+m_3,1+m_1+m_3) & (n=0)\\
 ((\uparrow),2n+m_1(2n)+m_2(2n+1)+m_3,2n+1+(\leftarrow),2n-1+m_1(2n-1)+m_2(2n)+m_3,2n+(\leftarrow)) & (n>0)\\
 ((\uparrow),-2n-1+m_1(-2n)+m_2(-2n-1)+m_3,-2n+(\leftarrow),-2n+m_1(-2n+1)+m_2(-2n)+m_3,-2n+1+(\leftarrow)) & (n<0)
\end{array}$\\\hline\hline

 $\{2c_n,m_1s_{n-1},m_2s_n\}$\\\hline
$\begin{array}{ll}
 (2+2(m_1+m_2),m_2,m_2,m_1,m_1) & (n=0)\\
 ((\uparrow),2n+m_1(n)+m_2(n+1),(\leftarrow),2n-2+m_1(n-1)+m_2(n),(\leftarrow)) & (n>0)\\
 ((\uparrow),-2n-2+m_1(-n)+m_2(-n-1),(\leftarrow),-2n+m_1(-n+1)+m_2(-n),(\leftarrow)) & (n<0)
\end{array}$\\\hline\hline

 $\{m_1s_{n-1},m_2s_n,m_3h\}$\\\hline
$\begin{array}{ll}
 (2(m_1+m_2+m_3),m_2+m_3,(\leftarrow),m_1+m_3,(\leftarrow)) & (n=0)\\
 ((\uparrow),m_1(n)+m_2(n+1)+m_3,(\leftarrow),m_1(n-1)+m_2(n)+m_3,(\leftarrow)) & (n>0)\\
 ((\uparrow),m_1(-n)+m_2(-n-1)+m_3,(\leftarrow),m_1(-n+1)+m_2(-n)+m_3,(\leftarrow)) & (n<0)
\end{array}$\\\hline\hline

 $\{H\}$\\\hline
 $(0,0,1,0,1)$\\\hline\hline

 $\{R_n\}$\\\hline
$\begin{array}{ll}
 (0,n,n,n-1,n) & (n>0)\\
 (0,-n-1,-n-1,-n-1,-n) & (n<0)
\end{array}$\\\hline\hline
\end{tabular}
}
\\\vspace{5mm}
 The other cases are modified to the above cases as follows:
\\\vspace{3mm}
\begin{tabular}{c||c|c|c}\hline\hline
 Sets of segments & $\{2\overline{c_n},m_1s_{n-1},m_2s_n\}$ & $\{Q\}$ & $\{\overline{R_0}\}$\\\hline
 Sets of modified segments & $\{(m_1+1)s_{n-1},(m_2+1)s_n,h\}$ & $\{2h\}$ & $\{r_0,r_1\}$\\\hline\hline
\end{tabular}\\\vspace{3mm}
\end{table}}

\clearpage
\section{Proof of Theorem \ref{intinj}}\label{pfintinj}

 Let $T$ be a tagged triangulation of $(S,M)$. To prove Theorem \ref{intinj}, we can assume that $T$ satisfies $(\Diamond)$. Let $v \in \bZ_{\ge 0}^n$ be an intersection vector with respect to $T$. We show that there is a unique modified tagged arc ${\sf m}$ such that $\Int(T,{\sf m})=v$.

 First of all, we assume that $T$ is not $T_3$. We only need to show that, for any puzzle piece $\square$, there is a unique set of modified segments $S=S_{\square}$ in $\square$ such that $\sum_{s \in S}\Int(s,\square)=v|_{\square}$. Indeed, gluing puzzle pieces of $T$, their segments are glued simultaneously. Then we can obtain ${\sf m}$.

 First, we consider the case that $\square$ is a triangle piece. That is, $v|_{\square}=(a_1,a_2,a_3)$. By symmetry, we can assume that $a_1 \ge a_2, a_3$. We consider the simultaneous equations
\begin{align*}
\begin{cases}
 m_2+m_3=a_1\\
 m_3+m_1=a_2\\
 m_1+m_2=a_3.\\
\end{cases}
\end{align*}
 If $(m_1,m_2,m_3)\in\bZ^3_{\ge 0}$, then we have $S=\{m_1e_1, m_2e_2,m_3e_3\}$. Now, we assume that $(m_1,m_2,m_3)\notin\bZ^3_{\geq0}$.
\begin{itemize}\setlength{\leftskip}{-5mm}
 \item If $a_2+a_3=a_1-1$, then we have $S=\{h_1,a_3e_2,a_2e_3\}$.
 \item If $a_2+a_3=a_1-2$, then we have $S=\{2h_1,a_3e_2,a_2e_3\}$.
 \item If $a_2+a_3=a_1+1$, then we have $S=\{y,(a_3-1)e_2,(a_2-1)e_3\}$.
 \item If $a_2+a_3=a_1+2$, then we have $S=\{y,e_1,(a_3-2)e_2,(a_2-2)e_3\}$.
\end{itemize}
 By Table \ref{ss0}, these cover all cases of $(a_1,a_2,a_3)$. Therefore, $v|_{\square}$ gives the unique set of modified segments $S_{\square}$.

 Second, we consider the case that $\square$ is a $1$-puncture piece. That is, $v|_{\square}=(a_1,a_2,a_3,a_4)$. By symmetry, we can assume that $a_1 \le a_2$.

\begin{itemize}\setlength{\leftskip}{-3mm}
\item[a)] Suppose that $a_3=a_4$. In this case, $S$ is one as in Table \ref{case1}.
\renewcommand{\arraystretch}{1.4}
{\begin{table}[ht]
\begin{tabular}{c|c|c}
$S$ & $a_1-a_2+2a_3$ & $a_1+a_2-2a_3$\\\hline\hline
 $\{m_1r,m_2u, m_3d\}$ & $2m_3$ & $2m_2$\\\hline
 $\{r_+, m_1r, m_3d\}$ & $1+2m_3$ & $-1$\\\hline
 $\{2r_+, m_1r, m_3d\}$ & $2+2m_3$ & $-2$\\\hline
 $\{r_-, m_1r, m_2u\}$ & $-1$ & $1+2m_2$\\\hline
 $\{2r_-, m_1r, m_2u\}$ & $-2$ & $2+2m_2$\\\hline
 $\{y_r, m_1r, m_3d\}$ & $1+2m_3$ & $1$\\\hline
 $\{y_r, u, m_1r, m_3d\}$ & $1+2m_3$ & $3$\\\hline
 $\{y_r, m_1r, m_2u\}$ & $1$ & $1+2m_2$\\\hline
 $\{y_r, d, m_1r, m_2u\}$ & $3$ & $1+2m_2$\\\hline
\end{tabular}\vspace{5mm}
 \caption{All cases of $S$ for a $1$-puncture piece and $a_3=a_4$}\label{case1}
\end{table}}
\begin{itemize}\setlength{\leftskip}{-5mm}
 \item[a1)] If $a_1-a_2+2a_3=-2$, then $S=\{2r_-, a_3r, a_1u\}$.

 \item[a2)] If $a_1-a_2+2a_3=-1$, then $S=\{r_-, a_3r, a_1u\}$.

 \item[a3)] If $a_1-a_2+2a_3=0$, then $S=\{a_3r,a_1u\}$.

 \item[a4)] If $a_1-a_2+2a_3=1$, then $S$ is either $\{r_+, m_1r\}$ or $\{y_r, m_1r, m_2u\}$.
\begin{itemize}\setlength{\leftskip}{-11mm}
 \item If $a_1+a_2-2a_3=-1$, then $S=\{r_+, (a_3-1)r\}$.
 \item If $a_1+a_2-2a_3 \neq -1$, then $S=\{y_r, (a_3-1)r, (a_1-1)u\}$.
\end{itemize}

 \item[a5)] If $a_1-a_2+2a_3=2$, then $S$ is either $\{m_1r,m_2u, d\}$ or $\{2r_+, m_1r\}$.
\begin{itemize}\setlength{\leftskip}{-11mm}
 \item If $a_1+a_2-2a_3=-2$, then $S=\{2r_+, (a_3-2)r\}$.
 \item If $a_1+a_2-2a_3 \neq -2$, then $S=\{(a_3-1)r,(a_1-1)u, d\}$.
\end{itemize}

 \item[a6)] If $a_1-a_2+2a_3=3$, then $S$ is either $\{r_+, m_1r, d\}$ or $\{y_r, d, m_1r, m_2u\}$.
\begin{itemize}\setlength{\leftskip}{-11mm}
 \item If $a_1+a_2-2a_3=-1$, then $S=\{r_+, (a_3-2)r, d\}$.
 \item If $a_1+a_2-2a_3 \neq -1$, then $S=\{y_r, d, (a_3-2)r,(a_1-2)u\}$.
\end{itemize}

 \item[a7)] If $a_1-a_2+2a_3\in 2\bZ_{\ge2}$, then $S$ is either $\{m_1r,m_2u, m_3d\}$ or $\{2r_+, m_1r, m_3d\}$.
\begin{itemize}\setlength{\leftskip}{-11mm}
 \item If $a_1+a_2-2a_3=-2$, then $S=\{2r_+, (a_3-a_1-2)r, a_1d\}$.
 \item If $a_1+a_2-2a_3 \neq -2$, then $S=\bigl\{\tfrac{1}{2}(a_2-a_1)r,\bigl(\tfrac{1}{2}(a_1+a_2)-a_3\bigr)u, \bigl(\tfrac{1}{2}(a_1-a_2)+a_3\bigr)d\bigr\}$.
\end{itemize}

 \item[a8)] If $a_1-a_2+2a_3 \in 2\bZ_{\ge2}+1$, then $S$ is one of $\{r_+, m_1r, m_3d\}$, $\{y_r, m_1r, m_3d\}$ and $\{y_r, u, m_1r, m_3d\}$.
\begin{itemize}\setlength{\leftskip}{-11mm}
 \item If $a_1+a_2-2a_3=-1$, then $S=\{r_+, (a_3-a_1-1)r, a_1d\}$.
 \item If $a_1+a_2-2a_3=1$, then $S=\{y_r, (a_3-a_1)r, (a_1-1)d\}$.
 \item If $a_1+a_2-2a_3=3$, then $S=\{y_r, u, (a_3-a_1+1)r, (a_1-2)d\}$.
\end{itemize}
\end{itemize}
\vspace{5mm}

\item[b)] Suppose that $a_4-a_3=1$. In this case, $S$ is one as in Table \ref{case2}.
\renewcommand{\arraystretch}{1.4}
{\begin{table}[ht]
\begin{tabular}{c|c|c}
$S$ & $a_1-a_2+2a_3$ & $a_1+a_2-2a_3$\\\hline\hline

 $\{r_p,m_1r,m_2u,m_3d\}$ & $-1+2m_3$ & $1+2m_2$\\\hline
 $\{r_p,r_+,m_1r,m_3d\}$ & $2m_3$ & $0$\\\hline
 $\{r_p,r_-,m_1r,m_2u\}$ & $-2$ & $2+2m_2$\\\hline

 $\{r_p,y_r,m_1r,m_3d\}$ & $2m_3$ & $2$\\\hline
 $\{r_p,y_r,u,m_1r,m_3d\}$ & $2m_3$ & $4$\\\hline
 $\{r_p,y_r,m_1r,m_2u\}$ & $0$ & $2+2m_2$\\\hline
 $\{r_p,y_r,d,m_1r,m_2u\}$ & $2$ & $4+2m_2$\\\hline

 $\{P_+\}$ & $0$ & $0$\\\hline
 $\{\overline{P_+}\}$ & $0$ & $2$\\\hline
 $\{\overline{P_+},d\}$ & $2$ & $2$\\\hline
\end{tabular}\vspace{5mm}
 \caption{All cases of $S$ for a $1$-puncture piece and $a_4-a_3=1$}\label{case2}
\end{table}}

\begin{itemize}\setlength{\leftskip}{-5mm}
 \item[b1)] If $a_1-a_2+2a_3\in2\bZ+1$, then $S=\bigl\{r_p,\tfrac{1}{2}(a_2-a_1-1)r,\bigl(\tfrac{1}{2}(a_1+a_2-1)-a_3\bigr)u,\bigl(\tfrac{1}{2}(a_1-a_2+1)+a_3\bigr)d\bigr\}$.

 \item[b2)] If $a_1-a_2+2a_3=-2$, then $S=\{r_p,r_-,a_3r,a_1u\}$.

 \item[b3)] If $a_1-a_2+2a_3=0$ and $a_1+a_2-2a_3=0$, then $S$ is either $\{r_p,r_+,m_1r\}$ or $\{P_+\}$.
\begin{itemize}\setlength{\leftskip}{-11mm}
 \item If $a_3=0$, then $S=\{P_+\}$.
 \item If $a_3 \neq 0$, then $S=\{r_p,r_+,(a_3-1)r\}$.
\end{itemize}

 \item[b4)] If $a_1-a_2+2a_3=0$ and $a_1+a_2-2a_3=2$, then $S$ is either $\{r_p,y_r,m_1r\}$ or $\{\overline{P_+}\}$.
\begin{itemize}\setlength{\leftskip}{-11mm}
 \item If $a_3=0$, then $S=\{\overline{P_+}\}$.
 \item If $a_3 \neq 0$, then $S=\{r_p,y_r,(a_3-1)r\}$.
\end{itemize}

 \item[b5)] If $a_1-a_2+2a_3=0$ and $a_1+a_2-2a_3\ge4$, then $S=\{r_p,y_r,(a_3-1)r,(a_1-1)u\}$.

 \item[b6)] If $a_1-a_2+2a_3=2$ and $a_1+a_2-2a_3=0$, then $S=\{r_p,r_+,(a_3-2)r,d\}$.

 \item[b7)] If $a_1-a_2+2a_3=2$ and $a_1+a_2-2a_3=2$, then $S$ is either $\{r_p,y_r,m_1r,d\}$ or $\{\overline{P_+},d\}$.
\begin{itemize}\setlength{\leftskip}{-11mm}
 \item If $a_3=1$, then $S=\{\overline{P_+},d\}$.
 \item If $a_3 \neq 1$, then $S=\{r_p,y_r,(a_3-2)r,d\}$.
\end{itemize}

 \item[b8)] If $a_1-a_2+2a_3=2$ and $a_1+a_2-2a_3\ge4$, then $S=\{r_p,y_r,d,(a_3-2)r,(a_1-2)u\}$.

 \item[b9)] If $a_1-a_2+2a_3\in2\bZ_{\ge2}$ and $a_1+a_2-2a_3=0$, then $S=\{r_p,r_+,(a_3-a_1-1)r,a_1d\}$.

 \item[b10)] If $a_1-a_2+2a_3\in2\bZ_{\ge2}$ and $a_1+a_2-2a_3=2$, then $S=\{r_p,y_r,(a_3-a_1)r,(a_1-1)d\}$.

 \item[b11)] If $a_1-a_2+2a_3\in2\bZ_{\ge2}$ and $a_1+a_2-2a_3=4$, then $S=\{r_p,y_r,u,(a_3-a_1+1)r,(a_1-2)d\}$.
\end{itemize}
\vspace{5mm}

\item[c)] Suppose that $a_4-a_3=2$. In this case, $S$ is either $\{2r_p,m_1r,m_2u,m_3d\}$ or $\{r_p,l_p,m_2u,m_3d\}$.
\begin{itemize}\setlength{\leftskip}{-5mm}
 \item[c1)] If $a_1=a_2$, then $S=\{r_p,l_p,(a_1-a_3-1)u,a_3d\}$.

 \item[c2)] If $a_1 \neq a_2$, then  $S=\bigl\{2r_p,\tfrac{1}{2}(a_2-a_1-2)r,\bigl(\tfrac{1}{2}(a_1+a_2)-a_3-1\bigr)u,\bigl(\tfrac{1}{2}(a_1-a_2)+a_3+1\bigr)d\bigr\}$.
\end{itemize}
\end{itemize}
 By Table \ref{ss1}, these cover all cases of $(a_1,a_2,a_3,a_4)$. Therefore, $v|_{\square}$ gives the unique set of modified segments $S_{\square}$.

 Finally, we consider the case that $\square$ is a $2$-puncture piece. That is, $v|_{\square}=(a_1,a_2,a_3,a_4,a_5)$. By symmetry, we can assume that $a_3-a_2 \le a_5-a_4$. We consider the division into cases as in Table \ref{div}.

\renewcommand{\arraystretch}{1.4}
{\begin{table}[ht]
\begin{tabular}{c|c|c|c|c}
   \multicolumn{4}{c|}{$a_1 = 0$} & a) \\ \hline
   \multirow{6}{*}{$a_1 \neq 0$} & \multirow{2}{*}{$a_1$ : odd} & \multicolumn{2}{c|}{$a_5-a_4 = 0$} & b) \\ \cline{3-5}
    & & \multicolumn{2}{c|}{$a_5-a_4 \neq 0$} & c) \\ \cline{2-5}
    & \multirow{4}{*}{$a_1$ : even} & \multicolumn{2}{c|}{$a_5-a_4 = 2$} & d) \\ \cline{3-5}
    & & \multirow{2}{*}{$a_5-a_4 = 1$} & $a_3-a_2 = 0$ & e) \\ \cline{4-5}
    & & & $a_3-a_2 = 1$ & f) \\ \cline{3-5}
    & & \multicolumn{2}{c|}{$a_5-a_4 = 0$} & g)
\end{tabular}
\begin{itemize}
 \item[a)] $\{H\}$, $\{R_n\}$
 \item[b)] $\{c_n,m_1s_{n-1},m_2s_n\}$, $\{\overline{c_n},m_1s_{n-1},m_2s_n\}$, $\{\overline{c_n},m_1s_{n-1},m_2s_n,h\}$
 \item[c)] $\{r_n,m_1s_{2n-2},m_2s_{2n-1},m_3h\}$ $(m_1 \neq 0)$, $\{r_n,m_1s_{2n-1},m_2s_{2n},m_3h\}$
 \item[d)] $\{2r_n,m_1s_{2n-2},m_2s_{2n-1},m_3h\}$ $(m_1 \neq 0)$, $\{2r_n,m_1s_{2n-1},m_2s_{2n},m_3h\}$, $\{r_n,r_{n+1},m_1s_{2n},m_3h\}$
 \item[e)] $\{\overline{R_n}\}$ ($n \neq 0$), $\{r_n,c_{i+1},m_1s_{i},m_2s_{i+1}\}$, $\{r_n,\overline{c_{i+1}},m_1s_{i},m_2s_{i+1}\}$ ($i = 2n-2$ or $2n-1$)
 \item[f)] $\{r_n,l_{n-1},m_1s_{2n-2},m_2s_{2n-1},m_3h\}$, $\{r_n,l_n,m_1s_{2n-1},m_2s_{2n},m_3h\}$
 \item[g)] $\{2c_n,m_1s_{n-1},m_2s_n\}$, $\{m_1s_{n-1},m_2s_n,m_3h\}$ (If $n>0$, $m_2 \neq 0$. If $n<0$, $m_1 \neq 0$.)
\end{itemize}\vspace{5mm}
 \caption{Division into cases}\label{div}
\end{table}}

\begin{itemize}\setlength{\leftskip}{-3mm}\setlength{\itemsep}{3mm}
 \item[a)] Suppose that $a_1 = 0$. If $a_2 = 0$, then $S = \{H\}$. If $a_2 \neq 0$, then $S = \{R_{a_2}\}$.

 \item[b)] Suppose that $a_1 \neq 0$ is odd and $a_5-a_4=0$. In this case, $S$ is one of the followings:
\begin{center}$\displaystyle
 \{c_n,m_1s_{n-1},m_2s_n\}, \{\overline{c_n},m_1s_{n-1},m_2s_n\}, \{\overline{c_n},m_1s_{n-1},m_2s_n,h\}.
$\end{center}
 Set
\begin{center}$\displaystyle
m = \frac{a_1-1}{2}.
$\end{center}
\begin{itemize}\setlength{\leftskip}{-5mm}\setlength{\itemsep}{3mm}
 \item[b1)] If $|a_2-a_4|>m$, then $S=\{c_n,m_1s_{n-1},m_2s_n\}$ for $n \neq 0$.
     \begin{itemize}\setlength{\leftskip}{-7mm}\setlength{\itemsep}{3mm}
 \item[b1i)] If $a_2-a_4>m$, then $n>0$. In this case,
\begin{center}$\displaystyle
 n+m n \le a_2 \le n+m(n+1),
$\end{center}
 thus
\begin{center}$\displaystyle
 \frac{a_2-m}{m+1} \le n \le \frac{a_2}{m+1}.
$\end{center}
 Since
\begin{center}$\displaystyle
\frac{a_2}{m+1}-\frac{a_2-m}{m+1}=\frac{m}{m+1}<1,
$\end{center}
 then $n$ is uniquely given as
\begin{center}$\displaystyle
\biggl\lfloor\frac{a_2}{m+1}\biggr\rfloor,
$\end{center}
 where $\lfloor x \rfloor := \max\{n \in \mathbb{Z} \mid n \le x\}$. We have $m_2 = a_2-(m+1)n$ and $m_1=m-m_2$, that is,\\
\begin{center}$\displaystyle
 S=\biggl\{c_n,\biggl(\frac{n+1}{2}a_1-a_2+\frac{n-1}{2}\biggr)s_{n-1},\biggl(-\frac{n}{2}a_1+a_2-\frac{n}{2}\biggr)s_n\biggl\}, n=\biggl\lfloor\frac{2a_2}{a_1+1}\biggr\rfloor.
$\end{center}

 \item[b1ii)] If $a_2-a_4<-m$, then $n<0$. In the same way as b1i), we obtain
\begin{center}$\displaystyle
 n=\biggl\lfloor\frac{-a_2-1}{m+1}\biggr\rfloor, m_1 = a_2+(m+1)n+m+1 \text{ and  } m_2=m-m_1,
$\end{center}
 that is,
\begin{center}\small$\displaystyle
 S=\biggl\{c_n,\biggl( \frac{n+1}{2}a_1+a_2+\frac{n+1}{2} \biggr)s_{n-1},\biggl( -\frac{n}{2}a_1-a_2-\frac{n+2}{2} \biggr)s_n \biggl\}, n=\biggl\lfloor \frac{-2(a_2+1)}{a_1+1} \biggr\rfloor.
$\end{center}
     \end{itemize}

 \item[b2)] If $|a_2-a_4| \le m$, then $S=\{c_0,m_1s_{-1},m_2s_0\}$, $S=\{\overline{c_n},m_1s_{n-1},m_2s_n\}$ or $\{\overline{c_n},m_1s_{n-1},m_2s_n,h\}$.
     \begin{itemize}\setlength{\leftskip}{-7mm}\setlength{\itemsep}{3mm}
 \item[b2i)] If $a_2+a_4 \le m$, then $S=\{c_0,a_4s_{-1},a_2s_0\}$.

 \item[b2ii)] If $a_2+a_4 = m+1$, then
\begin{center}$\displaystyle
 S=\{\overline{c_0},(m-a_2)s_{-1},(m-a_4)s_0\}=\{\overline{c_0},(\tfrac{1}{2}a_1-a_2-\tfrac{1}{2})s_{-1},(\tfrac{1}{2}a_1-a_4-\tfrac{1}{2})s_0\}.
$\end{center}

 \item[b2iii)] If $a_2+a_4=m+2$, then $S$ is one of the followings:
\[
 \{\overline{c_1},(a_2-2)s_0\}, \{\overline{c_{-1}},(a_4-2)s_{-1}\}, \{\overline{c_0},(m-a_2)s_{-1},(m-a_4)s_0,h\}.
\]
 If $a_4=1$, then $S=\{\overline{c_1},(a_2-2)s_0\}$. If $a_2=1$, then $S=\{\overline{c_{-1}},(a_4-2)s_{-1}\}$. Otherwise,
\begin{center}$\displaystyle
 S=\{\overline{c_0},(m-a_2)s_{-1},(m-a_4)s_0,h\}=\{\overline{c_0},(\tfrac{1}{2}a_1-a_2-\tfrac{1}{2})s_{-1},(\tfrac{1}{2}a_1-a_4-\tfrac{1}{2})s_0,h\}.
$\end{center}

 \item[b2iv)] If $a_2+a_4 > m+2$ and $|a_2-a_4|=m$, then $S=\{\overline{c_n},m_1s_{n-1},m_2s_n\}$, $n \neq 0$ and it is not as in (b2iii). In the same way as b1i), if $a_2-a_4=m$, then we have
\begin{center}$\displaystyle
 n=\biggl\lfloor\frac{2(a_2-1)}{a_1-1}\biggr\rfloor, m_2 = a_2-mn-1 \text{ and } m_1=m-1-m_2,
$\end{center}
 that is,
\begin{center}$\displaystyle
 S=\biggl\{\overline{c_n},\biggl(\frac{n+1}{2}a_1-a_2-\frac{n+1}{2}\biggr)s_{n-1},\biggl(-\frac{n}{2}a_1+a_2+\frac{n-2}{2}\biggr)s_n\biggl\}, n=\biggl\lfloor\frac{2(a_2-1)}{a_1-1}\biggr\rfloor.
$\end{center}
 If $a_2-a_4=-m$, then we have
\begin{center}$\displaystyle
 n=\biggl\lfloor-\frac{2a_2}{a_1-1}\biggr\rfloor, m_1 = a_2+mn+m-1 \text{ and } m_2=m-1-m_1,
$\end{center}
 that is,
\begin{center}$\displaystyle
 S=\biggl\{\overline{c_n},\biggl(\frac{n+1}{2}a_1+a_2-\frac{n+3}{2}\biggr)s_{n-1},\biggl(-\frac{n}{2}a_1-a_2+\frac{n}{2}\biggr)s_n\biggl\}, n=\biggl\lfloor-\frac{2a_2}{a_1-1}\biggr\rfloor.
$\end{center}

 \item[b2v)] If $a_2+a_4 > m+2$ and $|a_2-a_4|=m-1$, then $S=\{\overline{c_n},m_1s_{n-1},m_2s_n,h\}$, $n \neq 0$ and it is not as in (b2iii). In the same way as b1i), if $a_2-a_4=m-1$, then we have
\begin{center}$\displaystyle
 n=\biggl\lfloor\frac{2(a_2-2)}{a_1-3}\biggr\rfloor, m_2 = a_2-(m-1)n-2 \text{ and } m_1=m-2-m_2,
$\end{center}
 that is,
\begin{center}\small$\displaystyle
 S=\biggl\{\overline{c_n},\biggl(\frac{n+1}{2}a_1-a_2-\frac{3n+1}{2}\biggr)s_{n-1},\biggl(-\frac{n}{2}a_1+a_2+\frac{3n-4}{2}\biggr)s_n,h\biggl\}, n=\biggl\lfloor\frac{2(a_2-2)}{a_1-3}\biggr\rfloor.
$\end{center}
 If $a_2-a_4=-m+1$, then we have
\begin{center}$\displaystyle
 n=\biggl\lfloor\frac{-2a_2+2}{a_1-3}\biggr\rfloor, m_1 = a_2+(m-1)n+m-3 \text{ and } m_2=m-2-m_1,
$\end{center}
 that is,
\begin{center}\small$\displaystyle
 S=\biggl\{\overline{c_n},\biggl(\frac{n+1}{2}a_1+a_2-\frac{3n+7}{2}\biggr)s_{n-1},\biggl(-\frac{n}{2}a_1-a_2+\frac{3n+2}{2}\biggr)s_n,h\biggl\}, n=\biggl\lfloor\frac{-2a_2+2}{a_1-3}\biggr\rfloor.
$\end{center}
     \end{itemize}
\end{itemize}

 \item[c)] Suppose that $a_1 \neq 0$ is odd and $a_5-a_4\neq0$. In this case, $S$ is one of the followings:
\begin{center}$\displaystyle
 \{r_n,m_1s_{2n-2},m_2s_{2n-1},m_3h\}, \{r_n,m_1s_{2n-1},m_2s_{2n},m_3h\}.
$\end{center}
 Set
\begin{center}$\displaystyle
 m = \frac{a_1-1}{2}.
$\end{center}
\begin{itemize}\setlength{\leftskip}{-5mm}\setlength{\itemsep}{3mm}
 \item[c1)] If $a_2 \le m$, then $S=\{r_0,m_1s_{i},m_2s_{i+1},m_3h\}$ for $i = 2n-2$ or $2n-1$. If $a_4 > m$, then
\begin{center}$\displaystyle
 S=\biggl\{r_0,\biggl(-\frac{1}{2}a_1+a_4+\frac{1}{2}\biggr)s_{-2},\biggl(\frac{1}{2}a_1-a_2-\frac{1}{2}\biggr)s_{-1},\biggl(\frac{1}{2}a_1+a_2-a_4-\frac{1}{2}\biggr)h\biggl\}.
$\end{center}
 If $a_4 \le m$, then
\begin{center}$\displaystyle
 S=\biggl\{r_0,\biggl(\frac{1}{2}a_1-a_2-\frac{1}{2}\biggr)s_{-2},\biggl(\frac{1}{2}a_1-a_4-\frac{1}{2}\biggr)s_{-1},\biggl(-\frac{1}{2}a_1+a_2+a_4+\frac{1}{2}\biggr)h\biggl\}.
$\end{center}

 \item[c2)] If $a_2>m$, then $S=\{r_n,m_1s_{i},m_2s_{i+1},m_3h\}$ for $n\neq0$ and $i = 2n-2$ or $2n-1$.

     \begin{itemize}\setlength{\leftskip}{-7mm}\setlength{\itemsep}{3mm}
 \item[c2i)] If $a_2-a_4>0$, then $n>0$ and $m_3=m-(a_2-a_4-1)$. In this case,
\begin{center}$\displaystyle
 n+(m-m_3) (2n-1)+m_3 \le a_2 \le n+(m-m_3)(2n+1)+m_3,
$\end{center}
 thus
\begin{center}$\displaystyle
 \frac{a_2-m}{2m-2m_3+1} \le n \le \frac{a_2+m-2m_3}{2m-2m_3+1}.
$\end{center}
 Since
\begin{center}$\displaystyle
 \frac{a_2+m-2m_3}{2m-2m_3+1}-\frac{a_2-m}{2m-2m_3+1}=\frac{2m-2m_3}{2m-2m_3+1}<1,
$\end{center}
 then $n$ is uniquely given as
\begin{center}$\displaystyle
 \biggl\lfloor\frac{a_2+m-2m_3}{2m-2m_3+1}\biggr\rfloor.
$\end{center}
 Let $f=a_2-(n+(m-m_3) (2n-1)+m_3)$. If $0 \le f < m-m_3$, then
\begin{center}$\displaystyle
{\setlength\arraycolsep{0.5mm}
\begin{array}{lll}
 S &=& \{r_n,(m-m_3-f)s_{2n-2},f s_{2n-1},m_3h\}\\
    &=& \biggl\{r_n,\biggl(\frac{1}{2}a_1+(2n-2)a_2-(2n-1)a_4-\frac{2n-1}{2}\biggr)s_{2n-2},\\
    &&    \biggl(-\frac{1}{2}a_1+(-2n+3)a_2+(2n-2)a_4+\frac{2n-3}{2}\biggr)s_{2n-1},\biggl(\frac{1}{2}a_1-a_2+a_4+\frac{1}{2}\biggr)h\biggl\},
\end{array}}
$\end{center}
 if $m-m_3 \le f \le 2(m-m_3)$, then
\begin{center}$\displaystyle
{\setlength\arraycolsep{0.5mm}
\begin{array}{lll}
 S &=& \{r_n,(2(m-m_3)-f)s_{2n-1},(f-(m-m_3))s_{2n},m_3h\}\\
    &=& \biggl\{r_n,\biggl(\frac{1}{2}a_1+(2n-1)a_2-2na_4-\frac{2n+1}{2}\biggr)s_{2n-1},\\
    &&    \biggl(-\frac{1}{2}a_1-(2n-2)a_2+(2n-1)a_4+\frac{2n-1}{2}\biggr)s_{2n},\biggl(\frac{1}{2}a_1-a_2+a_4+\frac{1}{2}\biggr)h\biggl\},
\end{array}}
$\end{center}
where
\begin{center}$\displaystyle
 n=\biggl\lfloor\displaystyle{\frac{-a_1+6a_2-4a_4-3}{2(2a_2-2a_4-1)}}\biggr\rfloor.
$\end{center}

 \item[c2ii)] If $a_2-a_4\le0$, then $n<0$ and $m_3=m-(a_4-a_2)$. In the same way as c2i), $n$ is uniquely given as
\begin{center}$\displaystyle
 \biggl\lfloor\frac{m-a_2}{2m-2m_3+1}\biggr\rfloor.
$\end{center}
 Let $f=a_2-(-n-(m-m_3) (2n+1)+m_3)$. If $0 \le f \le m-m_3$, then
\begin{center}$\displaystyle
{\setlength\arraycolsep{0.5mm}
\begin{array}{lll}
 S &=& \{r_n,f s_{2n-1},(m-m_3-f) s_{2n},m_3h\}\\
    &=& \biggl\{r_n,\biggl(-\frac{1}{2}a_1-(2n+1)a_2+(2n+2)a_4+\frac{2n+1}{2}\biggr)s_{2n-1},\\
    &&    \biggl(\frac{1}{2}a_1+2na_2-(2n+1)a_4-\frac{2n+1}{2}\biggr)s_{2n},\biggl(\frac{1}{2}a_1+a_2-a_4-\frac{1}{2}\biggr)h\biggl\},
\end{array}}
$\end{center}
if $m-m_3 < f \le 2(m-m_3)$, then
\begin{center}$\displaystyle
{\setlength\arraycolsep{0.5mm}
\begin{array}{lll}
 S &=& \{r_n,(f-(m-m_3))s_{2n-2},(2(m-m_3)-f)s_{2n-1},m_3h\}\\
    &=& \biggl\{r_n,\biggl(-\frac{1}{2}a_1-2na_2+(2n+1)a_4+\frac{2n+1}{2}\biggr)s_{2n-2},\\
    &&    \biggl(\frac{1}{2}a_1+(2n-1)a_2-2na_4-\frac{2n+1}{2}\biggr)s_{2n-1},\biggl(\frac{1}{2}a_1+a_2-a_4-\frac{1}{2}\biggr)h\biggl\},
\end{array}}
$\end{center}
where
\begin{center}$\displaystyle
 n=\biggl\lfloor\displaystyle{\frac{m-a_2}{2m-2m_3+1}}\biggr\rfloor.
$\end{center}
\end{itemize}
\end{itemize}

 \item[d)] Suppose that $a_1 \neq 0$ is even and $a_5-a_4=2$. In this case, $S$ is one of the followings:
\begin{center}$\displaystyle
 \{2r_n,m_1s_{2n-2},m_2s_{2n-1},m_3h\}, \{2r_n,m_1s_{2n-1},m_2s_{2n},m_3h\}, \{r_n,r_{n+1},m_1s_{2n},m_3h\}.
$\end{center}
 Set
\begin{center}$\displaystyle
 m = \frac{a_1-2}{2}.
$\end{center}
\begin{itemize}\setlength{\leftskip}{-5mm}\setlength{\itemsep}{3mm}
 \item[d1)] Suppose that $a_2=m+1$. If $a_2 \le a_4$, then $S=\{r_{-1},r_{0},(a_4-a_2)s_{-2},(m+a_2-a_4)h\}$. If $a_2 > a_4$, then $S=\{r_0,r_{1},(a_2-a_4-1)s_{0},a_4h\}$.

 \item[d2)] Suppose that $a_2<m+1$. If $a_4>m$, then
\begin{center}$\displaystyle
{\setlength\arraycolsep{0.5mm}
\begin{array}{lll}
 S &=& \{2r_0,(a_4-m)s_{-2},(m-a_2)s_{-1},(m+a_2-a_4)h\}\\
    &=& \{2r_0,(-\tfrac{1}{2}a_1+a_4+1)s_{-2},(\tfrac{1}{2}a_1-a_2-1)s_{-1},(\tfrac{1}{2}a_1+a_2-a_4-1)h\}.
\end{array}}
$\end{center}
 If $a_4 \le m$, then
\begin{center}$\displaystyle
{\setlength\arraycolsep{0.5mm}
\begin{array}{lll}
 S &=& \{2r_0,(m-a_2)s_{-1},(m-a_4)s_{0},(a_2+a_4-m)h\}\\
    &=& \{2r_0,(\tfrac{1}{2}a_1-a_2-1)s_{-1},(\tfrac{1}{2}a_1-a_4-1)s_{0},(-\tfrac{1}{2}a_1+a_2+a_4+1)h\}.
\end{array}}
$\end{center}

 \item[d3)] Suppose that $a_2>m+1$.

\begin{itemize}\setlength{\leftskip}{-7mm}\setlength{\itemsep}{3mm}
 \item[d3i)] If $a_2-a_4>0$, then $n>0$ and $m_3=m-a_2+a_4+2$. If $S = \{2r_n,m_1s_{2n-2},m_2s_{2n-1},m_3h\}$ or $\{2r_n,m_1s_{2n-1},m_2s_{2n},m_3h\}$, $a_2$ satisfy
\begin{center}$\displaystyle
 2n+(m-m_3)(2n-1)+m_3 \le a_2 \le 2n+(m-m_3)(2n+1)+m_3
$\end{center}
 In particular, there is no $n' \in \mathbb{Z}$ such that $a_2=2n'+1+(m-m_3)(2n'+1)+m_3$. If there is such $n' \in \mathbb{Z}$, then
\begin{center}$\displaystyle
 S=\{r_n,r_{n+1},(a_2-a_4-2)s_{2n},(m-a_2+a_4+2)h\},
$\end{center}
where
\begin{center}$\displaystyle
 n=n'=\frac{a_2-m-1}{2(m-m_3)+2}.
$\end{center}
 If not, in the same way as c2i), we have
\begin{center}$\displaystyle
 n=\biggl\lfloor\frac{a_2+m-2m_3}{2(m-m_3)+2}\biggr\rfloor.
$\end{center}
 Set $f=a_2-(2n+(m-m_3)(2n-1)+m_3)$. If $0 \le f \le m-m_3$, then
\begin{center}$\displaystyle
{\setlength\arraycolsep{0.5mm}
\begin{array}{lll}
 S &=& \{2r_n,(m-m_3-f)s_{2n-2},fs_{2n-1},m_3h\}\\
    &=& \bigl\{2r_n,\bigl(\tfrac{1}{2}a_1+(2n-2)a_2-(2n-1)a_4-2n+1\bigr)s_{2n-2},\\
    &&    \bigl(-\tfrac{1}{2}a_1-(2n-3)a_2+(2n-2)a_4+2n-3\bigr)s_{2n-1},\bigl(\tfrac{1}{2}a_1-a_2+a_4+1\bigr)h\bigl\},
\end{array}}
$\end{center}
 if $m-m_3 < f \le 2(m-m_3)$, then
\begin{center}$\displaystyle
{\setlength\arraycolsep{0.5mm}
\begin{array}{lll}
 S &=& \{2r_n,(2(a_2-a_4-2)-f)s_{2n-1},(f+a_2-a_4-2)s_{2n},m_3h\}\\
    &=& \bigl\{2r_n,\bigl(\tfrac{1}{2}a_1+(2n-1)a_2-2na_4-2n-1\bigr)s_{2n-1},\\
    &&    \bigl(-\tfrac{1}{2}a_1-(2n-2)a_2+(2n-1)a_4+2n-1\bigr)s_{2n},\bigl(\tfrac{1}{2}a_1-a_2+a_4+1\bigr)h\bigl\},
\end{array}}
$\end{center}
where
\begin{center}$\displaystyle
 n=\biggl\lfloor\displaystyle{\frac{a_2+m-2m_3}{2(m-m_3)+2}}\biggr\rfloor.
$\end{center}

 \item[d3ii)] If $a_2-a_4<0$, then $n<0$ and $m_3=m+a_2-a_4$. In the same way as d3i), if there is $n' \in \mathbb{Z}$ such that $a_2=-2n'-1+(m-m_3)(-2n'-1)+m_3$, then
\begin{center}$\displaystyle
 S=\{r_n,r_{n+1},(-a_2+a_4)s_{2n},(m+a_2-a_4)h\},
$\end{center}
where
\begin{center}$\displaystyle
 n=n'=\biggl\lfloor\frac{-m+2m_3-a_2+1}{2(m-m_3)+2}\biggr\rfloor.
$\end{center}
 If not, we have
\begin{center}$\displaystyle
 n=\biggl\lfloor\frac{m-a_2}{2(m-m_3)+2}\biggr\rfloor
$\end{center}
 and $S$ is obtained as follows: Set $f=a_2-(-2n+(m-m_3)(-2n-1)+m_3)$. If $0 \le f \le m-m_3$, then
\begin{center}$\displaystyle
{\setlength\arraycolsep{0.5mm}
\begin{array}{lll}
 S &=& \{2r_n,fs_{2n-1},(m-m_3-f)s_{2n},m_3h\}\\
    &=& \bigl\{2r_n,\bigl(-\tfrac{1}{2}a_1-(2n+1)a_2+(2n+2)a_4+2n+1\bigr)s_{2n-1},\\
    &&    \bigl(\tfrac{1}{2}a_1+2na_2-(2n+1)a_4-2n-1\bigr)s_{2n},\bigl(\tfrac{1}{2}a_1+a_2-a_4-1\bigr)h\bigl\},
\end{array}}
$\end{center}
 if $m-m_3 < f \le 2(m-m_3)$, then
\begin{center}$\displaystyle
{\setlength\arraycolsep{0.5mm}
\begin{array}{lll}
 S &=& \{2r_n,(f-m+m_3)s_{2n-2},(2(m-m_3)-f)s_{2n-1},m_3h\}\\
    &=& \bigl\{2r_n,\bigl(-\tfrac{1}{2}a_1-2na_2+(2n+1)a_4+2n+1\bigr)s_{2n-2},\\
    &&    \bigl(\tfrac{1}{2}a_1+(2n-1)a_2-2na_4-2n-1\bigr)s_{2n-1},\bigl(\tfrac{1}{2}a_1+a_2-a_4-1\bigr)h\bigl\},
\end{array}}
$\end{center}
where
\begin{center}$\displaystyle
 n=\biggl\lfloor\displaystyle{\frac{m-a_2}{2(m-m_3)+2}}\biggr\rfloor.
$\end{center}
\end{itemize}
\end{itemize}

 \item[e)] Suppose that $a_1 \neq 0$ is even, $a_5-a_4=1$ and $a_3-a_2=0$. In this case, $S$ is one of the followings:
\begin{center}$\displaystyle
 \{\overline{R_n}\} (n \neq 0), \{r_n,c_{i+1},m_1s_{i},m_2s_{i+1}\}, \{r_n,\overline{c_{i+1}},m_1s_{i},m_2s_{i+1}\}
$\end{center}
for $i = 2n-2$ or $2n-1$. Suppose that $a_1=2$, $a_2 \neq 0$, $|a_2-a_4| \le 1$ and $|a_2-a_5| \le 1$. If $|a_2-a_4| = 1$, then $S=\{\overline{R_{a_4}}\}$. If $|a_2-a_4| = 0$, then $S=\{\overline{R_{-a_4}}\}$. Otherwise, $S=\{r_n,c_{i+1},m_1s_{i},m_2s_{i+1}\}$ or $\{r_n,\overline{c_{i+1}},m_1s_{i},m_2s_{i+1}\}$. Set
\begin{center}$\displaystyle
 m = \frac{a_1-2}{2}.
$\end{center}
\begin{itemize}\setlength{\leftskip}{-5mm}\setlength{\itemsep}{3mm}
 \item[e1)] If $a_2 \le m$, then $n=0$.

\begin{itemize}\setlength{\leftskip}{-7mm}\setlength{\itemsep}{3mm}
 \item[e1i)] Suppose that $a_4>m$. If $a_4-a_2=m+1$, then
\begin{center}$\displaystyle
 S=\{r_0,c_{-1},a_2s_{-2},(\tfrac{1}{2}a_1-a_2-1)s_{-1}\}.
$\end{center}
 If not, then
\begin{center}$\displaystyle
 S=\{r_0,\overline{c_{-1}},(a_2-1)s_{-2},(\tfrac{1}{2}a_1-a_2-1)s_{-1}\}.
$\end{center}

 \item[e1ii)] Suppose that $a_4 \le m$. If $a_2+a_4=m$, then
\begin{center}$\displaystyle
 S=\{r_0,c_{0},a_4s_{-1},a_2s_{0}\}.
$\end{center}
 If not, then
\begin{center}$\displaystyle
 S=\{r_0,\overline{c_{0}},(a_4-1)s_{-1},(a_2-1)s_{0}\}.
$\end{center}
\end{itemize}

 \item[e2)] Suppose that $a_2>m$, then $n \neq 0$.

\begin{itemize}\setlength{\leftskip}{-7mm}\setlength{\itemsep}{3mm}
 \item[e2i)] If $a_2-a_4>0$, then $n>0$. Suppose that $a_2-a_4=m+2$. In the same way as b1i), then $n$ is uniquely given as
\begin{center}$\displaystyle
 \biggl\lfloor\frac{a_2+m+1}{2m+3}\biggr\rfloor.
$\end{center}
 Set $f=a_2-(3n-1+m(2n-1))$. If $0 \le f \le m$, then
\begin{center}$\displaystyle
{\setlength\arraycolsep{0.5mm}
\begin{array}{lll}
 S &=& \{r_n,c_{2n-1},(m-f)s_{2n-2},fs_{2n-1}\}\\
    &=& \bigl\{r_n,c_{2n-1},(na_1-a_2+n-1)s_{2n-2},\bigl((-n+\tfrac{1}{2})a_1+a_2-n\bigr)s_{2n-1}\bigr\},
\end{array}}
$\end{center}
 if $m+1 \le f \le 2m+1$, then
\begin{center}$\displaystyle
{\setlength\arraycolsep{0.5mm}
\begin{array}{lll}
 S &=& \{r_n,c_{2n},(2m+1-f)s_{2n-1},(f-m-1)s_{2n}\}\\
    &=& \bigl\{r_n,c_{2n},\bigl((n+\tfrac{1}{2})a_1-a_2+n-1\bigr)s_{2n-1},(-na_1+a_2-n)s_{2n}\bigr\},
\end{array}}
$\end{center}
where
\begin{center}$\displaystyle
 n=\biggl\lfloor\frac{a_1+2a_2}{2(a_1+1)}\biggr\rfloor.
$\end{center}
 Suppose that $a_2-a_4 \neq m+2$. Then $n$ is uniquely given as
\begin{center}$\displaystyle
 \biggl\lfloor\frac{a_2+m-1}{2m+1}\biggr\rfloor.
$\end{center}
 Set $f=a_2-(3n+(m-1)(2n-1))$. If $0 \le f \le m-1$, then
\begin{center}$\displaystyle
{\setlength\arraycolsep{0.5mm}
\begin{array}{lll}
 S &=& \{r_n,\overline{c_{2n-1}},(m-1-f)s_{2n-2},fs_{2n-1}\}\\
    &=& \bigl\{r_n,\overline{c_{2n-1}},(na_1-a_2-n)s_{2n-2},\bigl((-n+\tfrac{1}{2})a_1+a_2+n-2\bigr)s_{2n-1}\bigr\},
\end{array}}
$\end{center}
 if $m \le f \le 2m-1$, then
\begin{center}$\displaystyle
{\setlength\arraycolsep{0.5mm}
\begin{array}{lll}
 S &=& \{r_n,\overline{c_{2n}},(2m-1-f)s_{2n-1},(f-m)s_{2n}\}\\
    &=& \bigl\{r_n,\overline{c_{2n}},\bigl((n+\tfrac{1}{2})a_1-a_2-n-1\bigr)s_{2n-1},(-na_1+a_2+n-1)s_{2n}\bigr\},
\end{array}}
$\end{center}
where
\begin{center}$\displaystyle
 n=\biggl\lfloor\frac{a_1+2a_2-4}{2(a_1-1)}\biggr\rfloor.
$\end{center}

 \item[e2ii)] If $a_2-a_4 \le 0$, then $n<0$. Suppose that $a_2-a_4=-m-1$. In the same way as above, then $n$ is uniquely given as
\begin{center}$\displaystyle
 \biggl\lfloor\frac{-a_2+m}{2m+3}\biggr\rfloor.
$\end{center}
 Set $f=a_2-(-3n-1+m(-2n-1))$. If $0 \le f \le m$, then
\begin{center}$\displaystyle
{\setlength\arraycolsep{0.5mm}
\begin{array}{lll}
 S &=& \{r_n,c_{2n},fs_{2n-1},(m-f)s_{2n}\}\\
    &=& \bigl\{r_n,c_{2n},\bigl((n+\tfrac{1}{2})a_1+a_2+n\bigr)s_{2n-1},(-na_1-a_2-n-1)s_{2n}\bigr\},
\end{array}}
$\end{center}
 if $m+1 \le f \le 2m+1$, then
\begin{center}$\displaystyle
{\setlength\arraycolsep{0.5mm}
\begin{array}{lll}
 S &=& \{r_n,c_{2n-1},(f-m-1)s_{2n-2},(2m+1-f)s_{2n-1}\}\\
    &=& \bigl\{r_n,c_{2n-1},(na_1+a_2+n)s_{2n-2},\bigl(-(n+\frac{1}{2})a_1-a_2-n-1\bigr)s_{2n-1}\bigr\},
\end{array}}
$\end{center}
where
\begin{center}$\displaystyle
 n=\biggl\lfloor\frac{a_1-2a_2-2}{2(a_1+1)}\biggr\rfloor.
$\end{center}
 Suppose that $a_2-a_4 \neq -m-1$. Then $n$ is uniquely given as
\begin{center}$\displaystyle
 \biggl\lfloor\frac{-a_2+m}{2m+1}\biggr\rfloor.
$\end{center}
 Set $f=a_2-(-3n+(m-1)(-2n-1))$. If $0 \le f \le m-1$, then
\begin{center}$\displaystyle
{\setlength\arraycolsep{0.5mm}
\begin{array}{lll}
 S &=& \{r_n,\overline{c_{2n}},fs_{2n-1},(m-1-f)s_{2n}\}\\
    &=& \bigl\{r_n,\overline{c_{2n}},\bigl((n+\tfrac{1}{2})a_1+a_2-n-2\bigr)s_{2n-1},(-na_1-a_2+n)s_{2n}\bigr\},
\end{array}}
$\end{center}
 if $m \le f \le 2m-1$, then
\begin{center}$\displaystyle
{\setlength\arraycolsep{0.5mm}
\begin{array}{lll}
 S &=& \{r_n,\overline{c_{2n-1}},(f-m)s_{2n-2},(2m-1-f)s_{2n-1}\}\\
    &=& \bigl\{r_n,\overline{c_{2n-1}},(na_1+a_2-n-1)s_{2n-1},\bigl((-n+\tfrac{1}{2})a_1-a_2+n-1\bigr)s_{2n}\bigr\},
\end{array}}
$\end{center}
where
\begin{center}$\displaystyle
 n=\biggl\lfloor\frac{a_1-2a_2-2}{2(a_1-1)}\biggr\rfloor.
$\end{center}
\end{itemize}
\end{itemize}

 \item[f)] Suppose that $a_1 \neq 0$ is even, $a_5-a_4=1$ and $a_3-a_2=1$. In this case, $S$ is one of the followings:
\begin{center}$\displaystyle
 \{r_n,l_{n-1},m_1s_{2n-2},m_2s_{2n-1},m_3h\}, \{r_n,l_n,m_1s_{2n-1},m_2s_{2n},m_3h\}.
$\end{center}
 Set
\begin{center}$\displaystyle
 m = \frac{a_1-2}{2}.
$\end{center}
 In the same way as d), we construct $S$.

\begin{itemize}\setlength{\leftskip}{-5mm}\setlength{\itemsep}{3mm}
 \item[f1)] If $a_2 \le m$, then $n=0$. If $a_4>m$, then
\begin{center}$\displaystyle
{\setlength\arraycolsep{0.5mm}
\begin{array}{lll}
 S &=& \{r_0,l_{-1},(a_4-m-1)s_{-2},(m-a_2)s_{-1},(m+a_2-a_4+1)h\}\\
    &=& \{r_0,l_{-1},(-\tfrac{1}{2}a_1+a_4)s_{-2},(\tfrac{1}{2}a_1-a_2-1)s_{-1},(\tfrac{1}{2}a_1+a_2-a_4)h\}.
\end{array}}
$\end{center}
 If not, then
\begin{center}$\displaystyle
{\setlength\arraycolsep{0.5mm}
\begin{array}{lll}
 S &=& \{r_0,l_{0},(m-a_2)s_{-1},(m-a_4)s_{0},(a_2+a_4-m)h\}\\
    &=& \{r_0,l_{0},(\tfrac{1}{2}a_1-a_2-1)s_{-1},(\tfrac{1}{2}a_1-a_4-1)s_{0},(-\tfrac{1}{2}a_1+a_2+a_4+1)h\}.
\end{array}}
$\end{center}

 \item[f2)] If $a_2>m$, then $n \neq 0$.

\begin{itemize}\setlength{\leftskip}{-7mm}\setlength{\itemsep}{3mm}
 \item[f2i)] If $a_2-a_4>0$, then $n>0$ and $m_3=m-(a_2-a_4-1)$. Moreover, $n$ is uniquely given as
\begin{center}$\displaystyle
 \biggl\lfloor\frac{a_2+m-2m_3+1}{2(m-m_3+1)}\biggr\rfloor.
$\end{center}
 Let $f=a_2-(2n-1+(m-m_3)(2n-1)+m_3)$. If $0 \le f \le m-m_3$, then
\begin{center}$\displaystyle
{\setlength\arraycolsep{0.5mm}
\begin{array}{lll}
 S &=& \{r_n,l_{n-1},(m-m_3-f)s_{2n-2},fs_{2n-1},m_3h\}\\
    &=& \bigl\{r_n,l_{n-1},\bigl(\tfrac{1}{2}a_1+(2n-2)a_2-(2n-1)a_4-1\bigr)s_{2n-2},\\
    &&    \bigl(-\tfrac{1}{2}a_1+(-2n+3)a_2+(2n-2)a_4\bigr)s_{2n-1},\bigl(\tfrac{1}{2}a_1-a_2+a_4\bigr)h\bigl\},
\end{array}}
$\end{center}
if $m-m_3+1 \le f \le 2(m-m_3)+1$, then
\begin{center}$\displaystyle
{\setlength\arraycolsep{0.5mm}
\begin{array}{lll}
 S &=& \{r_n,l_{n},(2(m-m_3)+1-f)s_{2n-1},(f-m+m_3-1)s_{2n},m_3h\}\\
    &=& \bigl\{r_n,l_{n},\bigl(\tfrac{1}{2}a_1+(2n-1)a_2-2na_4-1\bigr)s_{2n-1},\\
    &&    \bigl(-\tfrac{1}{2}a_1+(-2n+2)a_2+(2n-1)a_4\bigr)s_{2n},\bigl(\tfrac{1}{2}a_1-a_2+a_4\bigr)h\bigl\},
\end{array}}
$\end{center}
where
\begin{center}$\displaystyle
 n=\biggl\lfloor\displaystyle{\frac{-a_1+6a_2-4a_4}{4(a_2-a_4)}}\biggr\rfloor.
$\end{center}

 \item[f2ii)] If $a_2-a_4 \le 0$, then $n<0$ and $m_3=m+a_2-a_4+1$. Moreover, $n$ is uniquely given as
\begin{center}$\displaystyle
 \biggl\lfloor\frac{m-a_2}{2(m-m_3+1)}\biggr\rfloor.
$\end{center}
 Let $f=a_2-(-2n-1+(m-m_3)(-2n-1)+m_3)$. If $0 \le f \le m-m_3$, then
\begin{center}$\displaystyle
{\setlength\arraycolsep{0.5mm}
\begin{array}{lll}
 S &=& \{r_n,l_{n},fs_{2n-1},(m-m_3-f)s_{2n},m_3h\}\\
    &=& \bigl\{r_n,l_{n},\bigl(-\tfrac{1}{2}a_1-(2n+1)a_2+(2n+2)a_4\bigr)s_{2n-1},\\
    &&    \bigl(\tfrac{1}{2}a_1+2na_2-(2n+1)a_4-1\bigr)s_{2n},\bigl(\tfrac{1}{2}a_1+a_2-a_4\bigr)h\bigl\},
\end{array}}
$\end{center}
if $m-m_3+1 \le f \le 2(m-m_3)+1$, then
\begin{center}$\displaystyle
{\setlength\arraycolsep{0.5mm}
\begin{array}{lll}
 S &=& \{r_n,l_{n-1},(f-m+m_3-1)s_{2n-2},(2(m-m_3)+1-f)s_{2n-1},m_3h\}\\
    &=& \bigl\{r_n,l_{n},\bigl(-\tfrac{1}{2}a_1-2na_2+(2n+1)a_4\bigr)s_{2n-2},\\
    &&    \bigl(\tfrac{1}{2}a_1+(2n-1)a_2-2na_4-1\bigr)s_{2n-1},\bigl(\tfrac{1}{2}a_1+a_2-a_4\bigr)h\bigl\},
\end{array}}
$\end{center}
where
\begin{center}$\displaystyle
 n=\biggl\lfloor\displaystyle{\frac{a_1-2a_2-2}{4(a_2-a_4)}}\biggr\rfloor.
$\end{center}
\end{itemize}
\end{itemize}

 \item[g)] Suppose that $a_1 \neq 0$ is even, $a_5-a_4=0$. In this case, $S$ is one of the followings:
\begin{center}$\displaystyle
 \{2c_n,m_1s_{n-1},m_2s_n\}, \{m_1s_{n-1},m_2s_n,m_3h\},
$\end{center}
where for $S=\{m_1s_{n-1},m_2s_n,m_3h\}$ we assume the following conditions: If $n>0$, $m_2 \neq 0$; If $n<0$, $m_1 \neq 0$. Note that if $S=\{m_3h\}$, we have $n=0$.
 Set $m = a_1/2$.

\begin{itemize}\setlength{\leftskip}{-5mm}\setlength{\itemsep}{3mm}
 \item[g1)] Suppose that $|a_2-a_4|>m$. Then $S=\{2c_n,m_1s_{n-1},m_2s_n\}$ and $n \neq 0$. In the same way as b1i), if $a_2-a_4>m$, then
\begin{center}$\displaystyle
{\setlength\arraycolsep{0.5mm}
\begin{array}{lll}
 S &=& \{2c_n,((m+1)n+m-a_2-1)s_{n-1},(a_2-(m+1)n)s_n\}\\
    &=& \biggl\{2c_n,\biggl(\frac{n+1}{2}a_1-a_2+n-1\biggl)s_{n-1},\biggl(-\frac{n}{2}a_1+a_2-n\biggr)s_n\biggr\},
\end{array}}
$\end{center}
where
\begin{center}$\displaystyle
 n=\biggl\lfloor\frac{a_2}{m+1}\biggr\rfloor=\biggl\lfloor\frac{2a_2}{a_1+2}\biggr\rfloor.
$\end{center}
 If $a_2-a_4<-m$, then
\begin{center}$\displaystyle
{\setlength\arraycolsep{0.5mm}
\begin{array}{lll}
 S &=& \{2c_n,(a_2+(m+1)(n+1))s_{n-1},(-a_2-(m+1)n-2)s_n\}\\
    &=& \biggl\{2c_n,\biggl(\frac{n+1}{2}a_1+a_2+n+1\biggl)s_{n-1},\biggl(-\frac{n}{2}a_1-a_2-n-2\biggr)s_n\biggr\},
\end{array}}
$\end{center}
where
\begin{center}$\displaystyle
 n=\biggl\lfloor\frac{-a_2-2}{m+1}\biggr\rfloor=\biggl\lfloor\frac{-2(a_2+2)}{a_1+2}\biggr\rfloor.
$\end{center}

 \item[g2)] Suppose that $|a_2-a_4| \le m$. If $a_2+a_4<m$, then
\begin{center}$\displaystyle
 S=\{2c_0,a_4s_{-1},a_2s_0\}.
$\end{center}
 Suppose that $a_2+a_4 \ge m$.

\begin{itemize}\setlength{\leftskip}{-7mm}\setlength{\itemsep}{3mm}
 \item[g2i)] If $a_2, a_4 \le m$, then
\begin{center}$\displaystyle
 S=\{(\tfrac{1}{2}a_1-a_2)s_{-1},(\tfrac{1}{2}a_1-a_4)s_{0},(-\tfrac{1}{2}a_1+a_2+a_4)h\}.
$\end{center}

 \item[g2ii)] If either $a_2>m$ or $a_4>m$ holds, then $S=\{m_1s_{n-1},m_2s_n,m_3h\}$ for $n \neq 0$. Since $S$ is not $\{m_3 h\}$ by our assumptions, $a_2 \neq a_4$.  In the same way as b1i), if $a_2-a_4>0$, then
\begin{center}$\displaystyle
{\setlength\arraycolsep{0.5mm}
\begin{array}{lll}
 S &=& \{((a_2-a_4)(n+1)+m_3-a_2)s_{n-1},(a_2-((a_2-a_4)n+m_3))s_{n},(m-a_2+a_4)h\}\\
    &=& \{(\tfrac{1}{2}a_1+(n+1)a_2-na_4)s_{n-1},(-\tfrac{1}{2}a_1+(-n+2)a_2+(n-1)a_4)s_{n},(\frac{1}{2}a_1-a_2+a_4)h\},
\end{array}}
$\end{center}
where
\begin{center}$\displaystyle
 n=\biggl\lfloor\frac{2a_2-a_4-m}{a_2-a_4}\biggr\rfloor=\biggl\lfloor\frac{-a_1+4a_2-2a_4}{2(a_2-a_4)}\biggr\rfloor.
$\end{center}
 If $a_2-a_4<0$, then
\begin{center}$\displaystyle
{\setlength\arraycolsep{0.5mm}
\begin{array}{lll}
 S &=& \{(a_2-((-a_2+a_4)(-n-1)+m_3))s_{n-1},((-a_2+a_4)(-n)+m_3-a_2)s_{n},(m+a_2-a_4)h\}\\
    &=& \{(-\tfrac{1}{2}a_1-(n+1)a_2+(n+2)a_4)s_{n-1},(\tfrac{1}{2}a_1+na_2-(n+1)a_4)s_{n},(\tfrac{1}{2}a_1+a_2-a_4)h\},
\end{array}}
$\end{center}
where
\begin{center}$\displaystyle
 n=\biggl\lfloor\frac{m-a_4}{-a_2+a_4}\biggr\rfloor=\biggl\lfloor\frac{a_1-2a_4}{2(-a_2+a_4)}\biggr\rfloor.
$\end{center}
\end{itemize}
\end{itemize}
\end{itemize}
 By Table \ref{ss2}, these cover all cases of $(a_1,a_2,a_3,a_4,a_5)$. Therefore, $v|_{\square}$ gives the unique set of modified segments $S_{\square}$. This finishes the proof for $T \neq T_3$.

 To finish the proof of Theorem \ref{intinj}, we assume that $T = T_3$ (which consists of three pairs $(\tau_i,\tau_i')$ of conjugate arcs for $i \in \{1,2,3\}$ as in Figure \ref{QT}). For a modified tagged arc ${\sf m}$, we set $\Int(T_3,{\sf m}) = (a_1,a_1',a_2,a_2',a_3,a_3') \in \bZ_{\ge 0}^n$, where $a_i$ (resp., $a_i'$) is the intersection number of $\tau_i$ (resp., $\tau_i'$) and ${\sf m}$ as follows:
\[
\begin{tikzpicture}[baseline=0mm,scale=1.4]
 \coordinate (0) at (0,0); \node[right=2] at(0) {$o$};
 \coordinate (u) at (90:1); \node[above] at(u) {$p_1$};
 \coordinate (r) at (-30:1); \node[right] at(r) {$p_2$};
 \coordinate (l) at (210:1); \node[left] at(l) {$p_3$};
 \draw (0) to node[left,fill=white,inner sep=1]{$a_1$} (u);
 \draw (0) to [out=-60,in=-120,relative] node[pos=0.8]{\rotatebox{20}{\footnotesize $\bowtie$}} node[right,fill=white,inner sep=1]{$a_1'$} (u);
 \draw (0) to node[right=5,above=-3]{$a_2$} (r);
 \draw (0) to [out=-60,in=-120,relative] node[pos=0.8]{\rotatebox{100}{\footnotesize $\bowtie$}} node[below]{$a_2'$} (r);
 \draw (0) to node[below=7,right=-5]{$a_3'$} (l);
 \draw (0) to [out=-60,in=-120,relative] node[pos=0.8]{\rotatebox{-30}{\footnotesize $\bowtie$}} node[left=2,above=-1]{$a_3'$} (l);
 \fill (0) circle (0.5mm); \fill (u) circle (0.5mm); \fill (l) circle (0.5mm); \fill (r) circle (0.5mm);
\end{tikzpicture}
\]
 We show that ${\sf m}$ is uniquely determined by $\Int(T_3,{\sf m})$. By symmetry, we can assume that $a_i'-a_i \ge 0$ for $i \in \{1,2,3\}$. Let $\ell_i$ be a loop at $o$ cutting out a monogon with exactly one puncture $p_i$ as in Figure \ref{elli}.
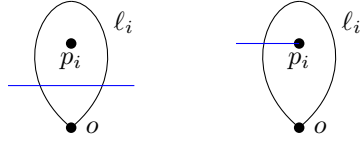
\begin{figure}[ht]
\begin{tikzpicture}[baseline=-1mm,scale=1.4]
 \coordinate (0) at (0,-0.2);
 \coordinate (1) at (0,-1); \node[right=2] at(1) {$o$};
 \draw (1) .. controls (0.6,-0.5) and (0.3,0.2) .. (0,0.2);
 \draw (1) .. controls (-0.6,-0.5) and (-0.3,0.2) .. (0,0.2);
 \fill(1) circle (0.5mm); \fill(0) circle (0.5mm);
 \node[below] at(0) {$p_i$}; \node at(0.5,0) {$\ell_i$};
 \draw[blue] (-0.6,-0.6)--(0.6,-0.6);
\end{tikzpicture}
   \hspace{10mm}
\begin{tikzpicture}[baseline=-1mm,scale=1.4]
 \coordinate (0) at (0,-0.2);
 \coordinate (1) at (0,-1); \node[right=2] at(1) {$o$};
 \draw (1) .. controls (0.6,-0.5) and (0.3,0.2) .. (0,0.2);
 \draw (1) .. controls (-0.6,-0.5) and (-0.3,0.2) .. (0,0.2);
 \fill(1) circle (0.5mm); \fill(0) circle (0.5mm);
 \node[below] at(0) {$p_i$}; \node at(0.5,0) {$\ell_i$};
 \draw[blue] (-0.6,-0.2)--(0);
\end{tikzpicture}
\caption{The loop $\ell_i$ corresponding to $a_i'$ and two kinds of segments of {\sf m} intersecting with $\ell_i$}\label{elli}
\end{figure}
 Note that $\ell_i$ is not a tagged arc, but we can define the intersection number $\Int(\ell_i,{\sf m})$ of $\ell_i$ and ${\sf m}$. It is clear that the number of intersection points of ${\sf m}$ and $\tau_i$ coincides with one of ${\sf m}$ and $\tau_i'$ except at $p_i$. Thus $a_i'-a_i$ is the number of the end points of ${\sf m}$ at $p_i$. Since ${\sf m}$ only intersects with $\ell_i$ in two ways as in Figure \ref{elli}, the set of segments of ${\sf m}$ in the monogon enclosed by $\ell_i$ consists of $a_i$ segments in the left diagram of Figure \ref{elli} and $a_i'-a_i$ segments in the right diagram of Figure \ref{elli}, in particular, is uniquely determined. Furthermore, by this observation, we have $\Int(\ell_i,{\sf m}) = 2 a_i + (a_i'-a_i)$. The set of segments of ${\sf m}$ in the triangle consisting of $\ell_1$, $\ell_2$ and $\ell_3$ is unique determined in the same way as a triangle piece which is Case($\tau_i$,$\tau_{i+1}$). Gluing their segments simultaneously, we can obtain ${\sf m}$. This finishes the proof of Theorem \ref{intinj}.

\section{Example}\label{Ex}

 Let $T$ be the tagged triangulation in Example \ref{ex}. We consider the following intersection vectors:
\[
\begin{tikzpicture}[baseline=0mm,scale=0.7]
 \coordinate (u) at (0,2);
 \coordinate (l) at (-150:2);
 \coordinate (r) at (-30:2);
 \coordinate (cd) at (0,-1);
 \coordinate (cl) at (150:1);
 \coordinate (cr) at (30:1);
 \draw (0,0) circle (2);
 \draw (r)--node[fill=white,inner sep=0.1]{$a_9$}(cr)--node[fill=white,inner sep=0.1]{$a_2$}(u)--node[fill=white,inner sep=0.1]{$a_1$}(cl)--node[fill=white,inner sep=0.1]{$a_3$}(cr);
 \draw (r)--node[fill=white,inner sep=0.1]{$a_8$}(cl)--node[fill=white,inner sep=0.1,pos=0.65]{$a_4$}(l);
 \draw (cl) .. controls (-145:2) and (-100:2) .. node[fill=white,inner sep=0.1,pos=0.8]{$a_5$} (r);
 \draw (cl) to node[fill=white,inner sep=0.1]{$a_6$} (cd);
 \draw (cl) to [out=-50,in=-100,relative] node[pos=0.85]{\rotatebox{80}{\footnotesize $\bowtie$}} node[fill=white,inner sep=0.1,pos=0.55]{$a_7$} (cd);
 \fill(u) circle (1mm); \fill(l) circle (1mm); \fill(r) circle (1mm); \fill(cd) circle (1mm); \fill(cl) circle (1mm); \fill(cr) circle (1mm);
\end{tikzpicture}\hspace{3mm}
\renewcommand{\arraystretch}{1}{\begin{array}{c}
      v_1=(1,1,0,0,0,0,0,0,0) \\
      v_2=(1,1,1,0,0,0,0,0,1) \\
      v_3=(1,2,2,1,1,0,0,1,2) \\
      v_4=(0,1,0,0,0,0,0,0,1) \\
      v_5=(1,1,2,1,1,1,1,1,1)
\end{array}}\hspace{3mm}
\renewcommand{\arraystretch}{1}{\begin{array}{c}
      v_6=(0,1,1,0,0,0,1,1,1) \\
      v_7=(0,1,1,0,0,1,0,1,1) \\
      v_8=(0,1,1,0,1,0,0,1,1) \\
      v_9=(0,1,1,0,0,0,0,0,0)
\end{array}}
\]
 By Theorem \ref{intinj}, each $v_i$ give a unique modified tagged arc ${\sf m}_i$ with respect to $T$. In fact, for instance, ${\sf m}_3$ is given in the way of the previous section as follows:
\[
\renewcommand{\arraystretch}{1}{\begin{array}{c}
\begin{tikzpicture}[baseline=0mm,scale=0.7]
 \coordinate (u) at (0,2);
 \coordinate (l) at (-150:2);
 \coordinate (r) at (-30:2);
 \coordinate (cd) at (0,-1);
 \coordinate (cl) at (150:1);
 \coordinate (cr) at (30:1);
 \draw[ultra thick,blue] (cl)+(60:0.5) arc (60:240:0.5);
 \draw (u) arc (90:210:2); \draw[dotted] (u) arc (90:-210:2);
 \draw[dotted] (cl)--(r)--(cr)--(u) (cl)--(cr);
 \draw (u)--node[fill=white,inner sep=0.1]{$1$}(cl);
 \draw[dotted] (r)--(cl);
 \draw (cl)--node[fill=white,inner sep=0.1,pos=0.65]{$1$}(l);
 \draw[dotted] (cl) .. controls (-145:2) and (-100:2) .. (r);
 \draw[dotted] (cl) to (cd);
 \draw[dotted] (cl) to [out=-50,in=-100,relative] node[pos=0.85]{\rotatebox{80}{\footnotesize $\bowtie$}} (cd);
 \fill(u) circle (1mm); \fill(l) circle (1mm); \fill(r) circle (1mm); \fill(cd) circle (1mm); \fill(cl) circle (1mm); \fill(cr) circle (1mm);
\end{tikzpicture}
     \hspace{5mm}
\begin{tikzpicture}[baseline=0mm,scale=0.7]
 \coordinate (u) at (0,2);
 \coordinate (l) at (-150:2);
 \coordinate (r) at (-30:2);
 \coordinate (cd) at (0,-1);
 \coordinate (cl) at (150:1);
 \coordinate (cr) at (30:1);
 \draw[ultra thick,blue] (cr)+(120:0.3) arc (120:180:0.3);
 \draw[ultra thick,blue] (cr)+(120:0.5) arc (120:180:0.5);
 \draw[ultra thick,blue] (cr)+(150:0.5) to +(150:1.5);
 \draw[dotted] (0,0) circle (2);
 \draw[dotted] (r)--(cr);
 \draw (cr)--node[fill=white,inner sep=0.1]{$2$}(u)--node[fill=white,inner sep=0.1]{$1$}(cl)--node[fill=white,inner sep=0.1]{$2$}(cr);
 \draw[dotted] (r)--(cl)--(l);
 \draw[dotted] (cl) .. controls (-145:2) and (-100:2) .. (r);
 \draw[dotted] (cl) to (cd);
 \draw[dotted] (cl) to [out=-50,in=-100,relative] node[pos=0.85]{\rotatebox{80}{\footnotesize $\bowtie$}} (cd);
 \fill(u) circle (1mm); \fill(l) circle (1mm); \fill(r) circle (1mm); \fill(cd) circle (1mm); \fill(cl) circle (1mm); \fill(cr) circle (1mm);
\end{tikzpicture}
     \hspace{5mm}
\begin{tikzpicture}[baseline=0mm,scale=0.7]
 \coordinate (u) at (0,2);
 \coordinate (l) at (-150:2);
 \coordinate (r) at (-30:2);
 \coordinate (cd) at (0,-1);
 \coordinate (cl) at (150:1);
 \coordinate (cr) at (30:1);
 \draw[ultra thick,blue] (cr)+(120:0.3) arc (120:-60:0.3);
 \draw[ultra thick,blue] (cr)+(120:0.5) arc (120:-60:0.5);
 \draw (u) arc (90:-30:2); \draw[dotted] (u) arc (90:330:2);
 \draw (r)--node[fill=white,inner sep=0.1]{$2$}(cr)--node[fill=white,inner sep=0.1]{$2$}(u);
 \draw[dotted] (u)--(cl)--(cr);
 \draw[dotted] (r)--(cl)--(l);
 \draw[dotted] (cl) .. controls (-145:2) and (-100:2) .. (r);
 \draw[dotted] (cl) to (cd);
 \draw[dotted] (cl) to [out=-50,in=-100,relative] node[pos=0.85]{\rotatebox{80}{\footnotesize $\bowtie$}} (cd);
 \fill(u) circle (1mm); \fill(l) circle (1mm); \fill(r) circle (1mm); \fill(cd) circle (1mm); \fill(cl) circle (1mm); \fill(cr) circle (1mm);
\end{tikzpicture}
\\
\begin{tikzpicture}[baseline=0mm,scale=0.7]
 \coordinate (u) at (0,2);
 \coordinate (l) at (-150:2);
 \coordinate (r) at (-30:2);
 \coordinate (cd) at (0,-1);
 \coordinate (cl) at (150:1);
 \coordinate (cr) at (30:1);
 \draw[ultra thick,blue] (cr)+(180:0.3) arc (180:300:0.3);
 \draw[ultra thick,blue] (cr)+(180:0.5) arc (180:300:0.5);
 \draw[ultra thick,blue] (cr)+(260:0.5) to +(260:0.93);
 \draw[dotted] (0,0) circle (2);
 \draw (r)--node[fill=white,inner sep=0.1]{$2$}(cr) (cl)--node[fill=white,inner sep=0.1]{$2$}(cr);
 \draw[dotted] (cr)--(u)--(cl);
 \draw (r)--node[fill=white,inner sep=0.1,pos=0.6]{$1$}(cl);
 \draw[dotted] (cl)--(l);
 \draw[dotted] (cl) .. controls (-145:2) and (-100:2) .. (r);
 \draw[dotted] (cl) to (cd);
 \draw[dotted] (cl) to [out=-50,in=-100,relative] node[pos=0.85]{\rotatebox{80}{\footnotesize $\bowtie$}} (cd);
 \fill(u) circle (1mm); \fill(l) circle (1mm); \fill(r) circle (1mm); \fill(cd) circle (1mm); \fill(cl) circle (1mm); \fill(cr) circle (1mm);
\end{tikzpicture}
     \hspace{5mm}
\begin{tikzpicture}[baseline=0mm,scale=0.7]
 \coordinate (u) at (0,2);
 \coordinate (l) at (-150:2);
 \coordinate (r) at (-30:2);
 \coordinate (cd) at (0,-1);
 \coordinate (cl) at (150:1);
 \coordinate (cr) at (30:1);
 \draw[ultra thick,blue] (cr)+(260:0.93) to +(250:2.03);
 \draw[dotted] (0,0) circle (2);
 \draw[dotted] (r)--(cr)--(u)--(cl)--(cr);
 \draw[dotted] (cl)--(l);
 \draw (r)--node[fill=white,inner sep=0.1,pos=0.6]{$1$}(cl);
 \draw (cl) .. controls (-145:2) and (-100:2) .. node[fill=white,inner sep=0.1,pos=0.85]{$1$} (r);
 \draw (cl) to node[fill=white,inner sep=0.1]{$0$} (cd);
 \draw (cl) to [out=-50,in=-100,relative] node[pos=0.85]{\rotatebox{80}{\footnotesize $\bowtie$}} node[fill=white,inner sep=0.1,pos=0.55]{$0$} (cd);
 \fill(u) circle (1mm); \fill(l) circle (1mm); \fill(r) circle (1mm); \fill(cd) circle (1mm); \fill(cl) circle (1mm); \fill(cr) circle (1mm);
\end{tikzpicture}
     \hspace{5mm}
\begin{tikzpicture}[baseline=0mm,scale=0.7]
 \coordinate (u) at (0,2);
 \coordinate (l) at (-150:2);
 \coordinate (r) at (-30:2);
 \coordinate (cd) at (0,-1);
 \coordinate (cl) at (150:1);
 \coordinate (cr) at (30:1);
 \draw[ultra thick,blue] (cr)+(250:2.03) .. controls (-100:2) and (-130:2) .. (-160:1.5);
 \draw (r) arc (-30:-150:2); \draw[dotted] (r) arc (-30:210:2);
 \draw[dotted] (r)--(cr)--(u)--(cl)--(cr);
 \draw[dotted] (r)--(cl);
 \draw (cl)--node[fill=white,inner sep=0.1,pos=0.65]{$1$}(l);
 \draw (cl) .. controls (-145:2) and (-100:2) .. node[fill=white,inner sep=0.1,pos=0.8]{$1$} (r);
 \draw[dotted] (cl) to (cd);
 \draw[dotted] (cl) to [out=-50,in=-100,relative] node[pos=0.85]{\rotatebox{80}{\footnotesize $\bowtie$}} (cd);
 \fill(u) circle (1mm); \fill(l) circle (1mm); \fill(r) circle (1mm); \fill(cd) circle (1mm); \fill(cl) circle (1mm); \fill(cr) circle (1mm);
\end{tikzpicture}
\end{array}}
\Rightarrow\hspace{3mm}
\begin{tikzpicture}[baseline=0mm,scale=0.7]
 \coordinate (u) at (0,2);
 \coordinate (l) at (-150:2);
 \coordinate (r) at (-30:2);
 \coordinate (cd) at (0,-1);
 \coordinate (cl) at (150:1);
 \coordinate (cr) at (30:1);
 \draw (0,0) circle (2);
 \draw[ultra thick,blue] (cr) circle(3mm); \draw[ultra thick,blue] (cr) circle(5mm);
 \draw[ultra thick,blue] (cr)+(90:0.5) .. controls (70:2) and (120:2) .. (150:1.55);
 \draw[ultra thick,blue] (150:1.55) .. controls (-180:1.7) and (-130:1.8) .. (-100:1.5);
 \draw[ultra thick,blue] (cr)+(-75:0.5) .. controls (0:1.1) and (-60:1.8) .. (-100:1.5);
 \fill(u) circle (1mm); \fill(l) circle (1mm); \fill(r) circle (1mm); \fill(cd) circle (1mm); \fill(cl) circle (1mm); \fill(cr) circle (1mm);
 \node at(0,0) {${\sf m}_3$};
\end{tikzpicture}
\]
 Similarly, we can obtain all ${\sf m}_i$. Then there is a unique tagged arc $\de_i$ such that ${\sf M}_T(\de_i)={\sf m}_i$ for $i \in \{1,\ldots,4,6,\ldots,9\}$. Finally, there is a unique tagged arc $\de_5$ such that ${\sf M}_T(\de_5)={\sf m}_5$ and $\{\de_1,\ldots,\de_9\}$ is a tagged triangulation as follows:
\[
\begin{tikzpicture}[baseline=0mm]
 \coordinate (u) at (0,2);
 \coordinate (l) at (-150:2);
 \coordinate (r) at (-30:2);
 \coordinate (cd) at (0,-1);
 \coordinate (cl) at (150:1);
 \coordinate (cr) at (30:1);
 \draw (0,0) circle (2);
 \draw (l) .. controls (180:2.2) and (130:2.3) .. (0,1.8); \draw (r) .. controls (0:2.2) and (50:2.3) .. node[fill=white,inner sep=0.1,pos=0.4]{$\de_1$} (0,1.8);
 \draw (cr) .. controls (40:2) and (90:1.85) .. (120:1.58);
 \draw (120:1.58) .. controls (140:1.8) and (180:1.8) .. node[right=-1,pos=0.9]{$\de_2$} (l);
 \draw (cr) .. controls (60:1.8) and (110:1.8) .. (150:1.4);
 \draw (150:1.4) .. controls (-180:1.55) and (-130:1.6) .. node[fill=white,inner sep=0.7,pos=0.65]{$\de_3$} (-90:1.4);
 \draw (-90:1.4) .. controls (-60:1.7) and (0:1) .. (cr);
 \draw (cr)--node[fill=white,inner sep=0.7,pos=0.7]{$\de_9$}(r);
 \draw (l) .. controls (-100:2.5) and (-40:2.5) .. node[fill=white,inner sep=0.1,pos=0.7]{$\de_8$} (cr);
 \draw (cl)--node[fill=white,inner sep=0.1]{$\de_4$}(cr);
 \draw (cl) to [out=60,in=120,relative] node[pos=0.15]{\rotatebox{-60}{\footnotesize $\bowtie$}} node[fill=white,inner sep=0.1]{$\de_5$} (cr);
 \draw (cd)--node[fill=white,inner sep=0.1]{$\de_6$}(cr);
 \draw (cd) to [out=60,in=120,relative] node[pos=0.15]{\rotatebox{10}{\footnotesize $\bowtie$}} node[fill=white,inner sep=0.1,pos=0.45]{$\de_7$} (cr);
 \fill[white](cr) circle (1.5mm); \draw (cr) circle (0.15);
 \fill(u) circle (0.7mm); \fill(l) circle (0.7mm); \fill(r) circle (0.7mm); \fill(cd) circle (0.7mm); \fill(cl) circle (0.7mm); \draw (cr) circle (0.7mm);
 \node[right=3] at (cr) {$p$};
\end{tikzpicture}
\]
 where all ends around the puncture $p$ are tagged notched.

\end{document}